\newcolumntype{M}[1]{>{\centering\arraybackslash}m{#1}}
\newcolumntype{N}{@{}m{0pt}@{}}
\def\squarebox#1{\hbox to #1{\hfill\vbox to #1{\vfill}}}
\def\lr#1{\left\langle #1\right\rangle}
\newtheorem{teorema}{Theorem}[section]
\newtheorem{lema}[teorema]{Lemma}
\newtheorem{corolario}[teorema]{Corollary}
\newtheorem{proposicao}[teorema]{Proposition}
\newtheorem{obs}[teorema]{Remark}
\newcommand{\qed}{\hspace*{\fill}	\vbox{\hrule\hbox{\vrule\squarebox{.667em}\vrule}\hrule}\smallskip}
\newenvironment{proof}{\noindent {\bf Proof:}}{\hfill $\qed $ \newline}
\newenvironment{exemplo}{\noindent {\bf Example:}}{}
\newenvironment{proof41}{\paragraph{Proof of Proposition \ref{prop:transposta}:}}{\hfill$\qed$}
\newenvironment{proof12}{\paragraph{Proof of Theorem \ref{ithm:convex}:}}{\hfill$\qed$}
\newcommand{\R}{\mathbb{R}}
\newcommand{\C}{\mathbb{C}}
\newcommand{\F}{\mathbb{F}}
\newcommand{\ad}{\mathrm{ad}}
\newcommand{\g}{\mathfrak{g}}
\renewcommand{\t}{\mathfrak{t}}
\renewcommand{\k}{\mathfrak{k}}
\newcommand{\m}{\mathfrak{m}}
\newcommand{\n}{\mathfrak{n}}
\newcommand{\h}{\mathfrak{h}}
\newcommand{\gl}{\mathfrak{gl}}
\renewcommand{\O}{\mathrm{O}}
\renewcommand{\prod}[1]{\langle {#1} \rangle}
\newcommand{\T}{\Theta}
\newenvironment{smallpmatrix}{\left(\begin{smallmatrix}}
	{\end{smallmatrix}\right)}
\DeclareMathOperator{\Ric}{Ric}
\DeclareMathOperator{\Ad}{Ad}
\DeclareMathOperator{\mrk}{mrk}
\newcommand{\ov}[1]{{\overline{#1}}}
\begin{document} 
	
	\title{On the embeddability of the homogeneous Ricci flow and its collapses}
	\author{Mauro Patrão, Lucas Seco, Llohann Sperança}
	\maketitle
	
	\begin{abstract}
	This article grew out of the urge to realize explicit examples of solutions for the Ricci flow as families of  isometrically embedded submanifolds, together with  its Gromov-Hausdorff collapses.
	To this aim, we consider the Ricci flow of invariant metrics in a class of flag manifolds. 
	
	On the one hand, we contrast with a previous result in literature by presenting entire flow lines of invariant metrics realized as orbits of a fixed representation. Indeed, we prove that
    the subset of realizable metrics has a global attractor with open interior,  containing an expressive family of complete flow lines.
	On the other hand, we prove that certain collapses cannot be realized in any fixed Euclidean space.  
    
    We provide a detailed picture of the flow, including examples of both realizable and non-realizable flow lines and collapses. 
    
	\end{abstract}
	
\noindent {\footnotesize\textit{AMS 2010 subject classification}: Primary: 53C44; 
57R40, 
Secondary: 
53C30, 
14M15. 
}

\bigbreak

\noindent {\footnotesize \textit{Keywords:} Homogeneous Ricci flow, Isometric embedding, Flag manifolds, Gromov-Hausdorff convergence.}

	\section{Introduction}
	By {\em realizing} a Riemannian metric we mean obtaining it from an isometric embedding into a fixed Euclidean space. Because of their large isometry group, homogeneous manifolds $G/K$ of a Lie group $G$ are natural candidates for explicit realizations as orbits of representations.
	Nevertheless,  realizations are fated to limitations: although a compact family of invariant metrics on a fixed homogeneous manifold can always be realized as orbits of an isometric action \cite{moore1980equivariant}, here we prove that the Gromov-Hausdorff limit of a such a sequence is generically not realizable in the same fashion (see Theorem \ref{ithm:convergence} and the discussion in Section \ref{sec-collapse}).
	
    On the other hand, the invariance by the  Ricci flow with respect to several geometric properties of  (such as Positive Isotropic Curvature and non-negative bisectional curvature) are determinant ingredients in many important results (we refer to Wilking \cite{wilkinglie} for an account of such results and invariant conditions). However, the property of being realizable seems scarcely studied: \cite{safdari} (the only related work known to the authors) presents an example  of a hypersurface in Euclidean space whose intrinsic metric ceases to be realizable after a Ricci flow.
    
    Here we contrast with \cite{safdari} by presenting examples where the stronger condition of being realized by equivariant embeddings is invariant under the forward homogeneous Ricci flow. To this aim, we work with flag manifolds $G/K$ of a compact Lie group $G$, a family of manifolds that often  provide  solid working grounds  for computations (see for instance, \cite{anastassiou-chrysikos}, \cite{ricci} and \cite{ziller}).
    Flag manifolds are naturally realized in the Lie algebra $\g$ of $G$ as orbits of the adjoint representation.  Here we consider the action of $G$ in the $k$-fold product $\g^k$ defined  by the adjoint in each coordinate. We call such a representation as the \textit{adjoint representation in $\g^k$} and its orbits as \textit{adjoint orbits in $\g^k$}. We  explore how vast is the set of metrics and solutions of the homogeneous Ricci flow there realized. 
    
    For the realizable metrics, denote by $\mathcal{M}^{(k)}$ the subset of $G$-invariant metrics of the flag manifold $G/K$ which are realized as adjoint orbits in $\g^k$.
	We prove:
	
	\begin{teorema}\label{ithm:convex}
		Let $r$ be the rank of the flag manifold $G/K$ and $k \geq 1$. Then
		\[\mathcal{M}^{(k)}\subseteq \mathcal{M}^{(r)}=\rm{conv}\mathcal{M}^{(1)}\]
		where the latter denotes the convex hull of $\mathcal{M}^{(1)}$. Moreover, if $G/K$ has $T$-roots of type $A$, then $\mathcal{M}^{(r)}$ has non-empty interior.
	\end{teorema}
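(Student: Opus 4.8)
The plan is to translate the problem into one about positive semidefinite matrices on the torus $\mathfrak z(\mathfrak k)$ and then read off the three assertions from convex geometry. Recall that the isotropy representation of $G/K$ decomposes as $\mathfrak m=\mathfrak m_1\oplus\cdots\oplus\mathfrak m_s$, with summands indexed by the positive $T$-roots $\xi_1,\dots,\xi_s$ viewed as functionals on $\mathfrak t':=\mathfrak z(\mathfrak k)$ (a space of dimension $r$), and that a $G$-invariant metric is recorded by its vector of scalings $(x_1,\dots,x_s)\in\mathbb R^s_{>0}$ relative to a fixed bi-invariant background. An adjoint orbit in $\mathfrak g^k$ through $(X_1,\dots,X_k)$ is $G$-equivariantly diffeomorphic to $G/K$ precisely when $\bigcap_i Z_G(X_i)$ is conjugate to $K$; since $\mathfrak g^{K}=\mathfrak z_{\mathfrak g}(\mathfrak k)=\mathfrak t'$, after conjugating one may take all $X_i\in\mathfrak t'$, and a general $G$-invariant inner product on $\mathfrak g^k$ differs from the product one by a $G$-equivariant change of coordinates, so no generality is lost. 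The differential of $gK\mapsto(\mathrm{Ad}(g)X_1,\dots,\mathrm{Ad}(g)X_k)$ at $eK$ sends $Y\in\mathfrak m$ to $([Y,X_1],\dots,[Y,X_k])$, and $-\mathrm{ad}(X_i)^2$ acts on $\mathfrak m_j$ as the scalar $\xi_j(X_i)^2$, so the realized metric has $x_j=\sum_{i=1}^k\xi_j(X_i)^2$. Writing $P=\sum_i X_iX_i^{\top}\succeq0$ (of rank $\le k$) on $\mathfrak t'$, this reads $x_j=\xi_j^{\top}P\xi_j$, so
\[\mathcal M^{(k)}=\bigl\{\,(\xi_j^{\top}P\xi_j)_{j=1}^s\ :\ P\succeq0,\ \operatorname{rank}P\le k\,\bigr\}\cap\mathbb R^s_{>0}.\]

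From this formula two of the claims are immediate. Since any $r\times r$ matrix has rank at most $r$, one gets $\mathcal M^{(k)}\subseteq\mathcal M^{(r)}$ for every $k$, with equality once $k\ge r$. Moreover $\mathcal K:=\{(\xi_j^{\top}P\xi_j)_j:P\succeq0\}$ is a convex cone (a linear image of the positive semidefinite cone) and $\mathcal M^{(r)}=\mathcal K\cap\mathbb R^s_{>0}$. Because $\mathcal M^{(1)}$ is invariant under positive homotheties (rescaling $X$ rescales the metric), its convex hull equals the convex cone it generates; and since the rank-one $P=XX^{\top}$ with $X$ regular realize exactly the points of $\mathcal M^{(1)}$, that cone lies inside $\mathcal K\cap\mathbb R^s_{>0}=\mathcal M^{(r)}$, giving $\operatorname{conv}\mathcal M^{(1)}\subseteq\mathcal M^{(r)}$.

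The reverse inclusion $\mathcal M^{(r)}\subseteq\operatorname{conv}\mathcal M^{(1)}$ is the technical core, and I would argue it through the face structure of the positive semidefinite cone rather than a naive rank-one splitting of $P$: a rank-one summand $XX^{\top}$ of $P$ with $X$ non-regular produces boundary points of $\mathcal K$, which $\operatorname{conv}\mathcal M^{(1)}$ must nonetheless reach. Given $y=(\xi_j^{\top}P\xi_j)_j\in\mathcal M^{(r)}$, put $V=\operatorname{range}P$; since $y\in\mathbb R^s_{>0}$ forces $\xi_j^{\top}P\xi_j>0$, every $\xi_j$ restricts nontrivially to $V$, so the regular vectors lying in $V$ (those outside the finitely many proper subspaces $\ker(\xi_j|_V)\subsetneq V$) form a dense open set $V_{\mathrm{reg}}$, which is nonempty as $V\neq\{0\}$. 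As $P$ lies in the relative interior of the face $\{Q\succeq0:\operatorname{range}Q\subseteq V\}$, the point $y$ lies in the relative interior of the image of that face under the linear map above, namely of the convex cone generated by $\{(\xi_j(X)^2)_j:X\in V\}$. The subcone generated by $\{(\xi_j(X)^2)_j:X\in V_{\mathrm{reg}}\}$ is convex, contained in $\operatorname{conv}\mathcal M^{(1)}$, and dense in the former; since a convex set and its closure share their relative interior, the subcone already contains that relative interior, hence $y$. This proves $\mathcal M^{(r)}=\operatorname{conv}\mathcal M^{(1)}$.

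Finally, for the interior statement, consider the linear map $L:\operatorname{Sym}^2(\mathfrak t')\to\mathbb R^s$, $L(P)=(\xi_j^{\top}P\xi_j)_j$. The positive definite matrices form a nonempty open set mapping into $\mathcal M^{(r)}$, so it suffices that $L$ be surjective, equivalently that $\xi_1\xi_1^{\top},\dots,\xi_s\xi_s^{\top}$ be linearly independent in $\operatorname{Sym}^2(\mathfrak t')$. When the $T$-root system is of type $A$ one may treat each irreducible factor separately; realizing the positive $T$-roots of an $A_\ell$ factor as $\epsilon_a-\epsilon_b$ $(a<b)$ one has $(\epsilon_a-\epsilon_b)(\epsilon_a-\epsilon_b)^{\top}=E_{aa}+E_{bb}-E_{ab}-E_{ba}$, and a short computation comparing matrix entries of a hypothetical linear relation — bearing in mind it need only hold on the Cartan hyperplane $\sum_a x_a=0$ — forces all coefficients to vanish. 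Hence $L$ is onto, and since $\mathcal M^{(1)}\neq\varnothing$ the set $\mathcal M^{(r)}$ contains a nonempty open set. I expect the third paragraph — ensuring the boundary faces of $\mathcal K$ are reached by $\operatorname{conv}\mathcal M^{(1)}$ — to be the only genuinely delicate step; the type-$A$ computation is elementary, but it is precisely where the hypothesis on the $T$-roots enters.
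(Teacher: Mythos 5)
Your proposal reconstructs the paper's argument almost exactly: encode the metric realized by $(H_1,\dots,H_k)$ via the Gram matrix $P=\sum_i H_iH_i^{\top}$, observe that the induced-metric map factors as a linear map $\mu$ on $\mathrm{PS}(r)$, deduce $\mathcal{M}^{(k)}\subseteq\mathcal{M}^{(r)}$ because $P$ has rank at most $r$, obtain the convex-hull identity from the fact that $\mathrm{PS}(r)$ is the convex cone generated by its rank-one elements, and for type $A$ show the functionals $\xi_j\xi_j^{\top}$ (equivalently, the $x_{pq}$ in the paper's notation) are linearly independent to make $\mu$ an isomorphism. Where you diverge — and in fact improve on the paper's write-up — is in the reverse inclusion $\mathcal{M}^{(r)}\subseteq\operatorname{conv}\mathcal{M}^{(1)}$: the paper proves the identity $\overline{\mathcal{M}}^{(r)}=\operatorname{conv}\overline{\mathcal{M}}^{(1)}$ for the semi-definite (``closed'') cones directly from Lemma~\ref{lem-gram}(vii) and the linearity of $\mu$, but passes silently over the point that a rank-one decomposition of $P$ may involve non-regular $X$, i.e.\ boundary points $(\xi_j(X)^2)_j\notin\mathcal{M}^{(1)}$, so that the positive-orthant version $\mathcal{M}^{(r)}=\operatorname{conv}\mathcal{M}^{(1)}$ does not follow by a literal restriction. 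Your relative-interior/density argument (identify $V=\operatorname{range}P$, note $y$ lies in the relative interior of the cone on $\{(\xi_j(X)^2)_j:X\in V\}$, and use that the subcone over regular $X\in V$ has the same closure and hence the same relative interior) fills exactly this gap cleanly, at the cost of invoking a little more convex analysis. The type-$A$ linear-independence computation is the same content presented in the $\epsilon_a-\epsilon_b$ coordinates instead of the paper's telescoping basis $[\alpha_{pq}]=[\alpha_p]+\cdots+[\alpha_q]$; both are correct and equivalent. In short: same route as the paper, with one step made rigorous that the paper leaves implicit.
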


    For the realizable Ricci flow lines, we build-up on the results of \cite{ricci}, which studies the global dynamics and Gromov-Hausdorff collapses of the Ricci flow on flag manifolds with 3 isotropic summands by introducing the \textit{projected Ricci flow}, an alternative to the unit volume Ricci flow which we use here (see Section \ref{sec-prelim} for the appropriate definitions).
    When such flag manifolds have T-roots of type $A$, the projected Ricci flow has flow lines and equilibria as sketched in Figure \ref{JapanFlag}, where the phase space of metrics is the equilateral triangle $OPQ$
    (see Section 5 of \cite{ricci}). Such flag manifolds have rank 2, so that by Theorem \ref{ithm:convex} the relevant  metrics are $\mathcal{M}^{(2)}$, realizable as adjoint orbits in $\g^2$. We prove:
	
	   	\begin{figure}[h!]\label{JapanFlag}
		\begin{center}
			\def\svgwidth{10cm}
			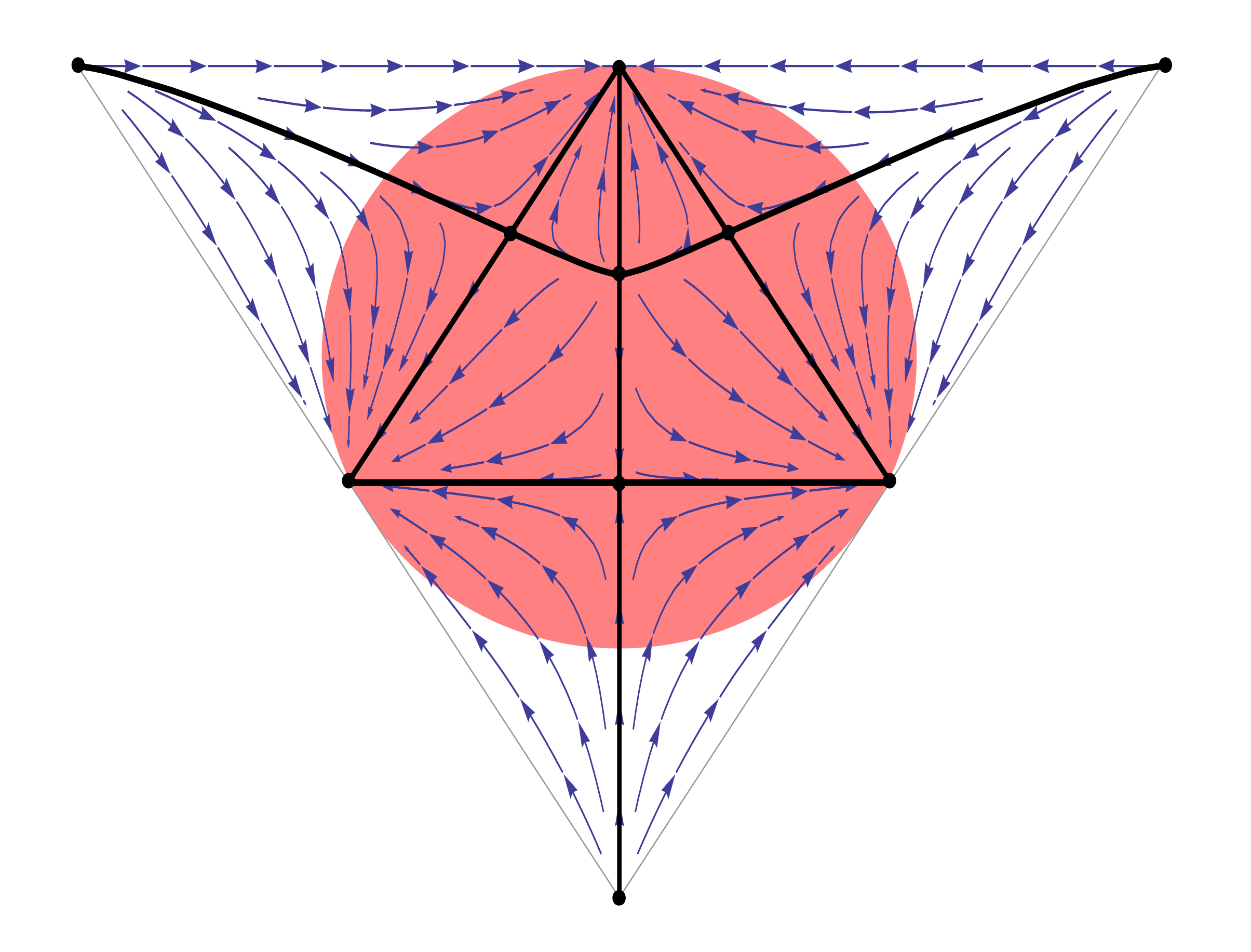
			\caption{The inscribed disc misses the collapsing limits at the vertices $O$, $P$, $Q$. But it contains the collapsing limits $K$, $L$, $M$ and all non-collapsing limits in its interior, besides the complete flow lines on and inside the triangle $KLM$.}
		\end{center}
	\end{figure}
	
	\begin{teorema}\label{ithm:figura}
	For the phase space of the projected Ricci flow of flag manifolds of 3 isotropic summands with T-roots of type $A$, we have that
	    \begin{enumerate}
	    \item The realizable metrics $\mathcal{M}^{(2)}$ equals the inscribed disk and is forward invariant by the projected Ricci flow. In particular, it contains the attractor given by the inscribed triangular region $KLM$.

	    \item Thus, every flow line contained in the disk can be realized continuously as adjoint orbits in $\g^2$, including the corresponding collapses at $K$, $L$, $M$.
	    
	    \item The collapses at $O,P,Q$ cannot be realized as equivariant isometric embeddings into a fixed Euclidean space.
	    \end{enumerate}
	\end{teorema}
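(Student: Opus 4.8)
The plan is to dispatch the three assertions separately, leaning on the explicit description of the projected Ricci flow in \cite{ricci} and on Theorem~\ref{ithm:convex}.

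\textbf{Item (1).} Since $G/K$ has rank $r=2$, Theorem~\ref{ithm:convex} gives $\mathcal{M}^{(2)}=\mathcal{M}^{(r)}=\mathrm{conv}\,\mathcal{M}^{(1)}$, so it suffices to pin down $\mathcal{M}^{(1)}$. The adjoint orbit $\Ad(G)\xi$ of a regular $\xi\in\mathfrak{z}(\k)$ (regular meaning $\alpha_i(\xi)\neq0$ for every $T$-root $\alpha_i$) induces on the isotropy summand $\m_i$ the metric $\alpha_i(\xi)^2\,Q|_{\m_i}$, where $Q$ is the fixed bi-invariant background product on $\g$. As the $T$-roots form an $A_2$-system we may take $\alpha_3=\alpha_1+\alpha_2$ and set $t_j=\alpha_j(\xi)$, whence $\mathcal{M}^{(1)}=\{[t_1^2:t_2^2:(t_1+t_2)^2]:t_1t_2(t_1+t_2)\neq0\}$, with the three walls $t_j=0$, $t_1+t_2=0$ contributing the limit points $[1:1:0],[1:0:1],[0:1:1]$. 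The identity $t_1^4+t_2^4+(t_1+t_2)^4=2(t_1^2+t_1t_2+t_2^2)^2=\tfrac12\big(t_1^2+t_2^2+(t_1+t_2)^2\big)^2$ shows that every such normalized triple satisfies $x_1^2+x_2^2+x_3^2=\tfrac12(x_1+x_2+x_3)^2$, which in the equilateral normalization of $OPQ$ from \cite{ricci} is exactly the incircle, and $[t_1:t_2]\mapsto[t_1^2:t_2^2:(t_1+t_2)^2]$ traverses it in full; hence $\mathcal{M}^{(2)}=\mathrm{conv}\,\mathcal{M}^{(1)}$ is the closed inscribed disk, and the three tangency points with $\partial(OPQ)$ are precisely the vertices $K,L,M$. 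Forward invariance of the disk is then a direct check with the explicit flow of \cite{ricci}: its vector field is never outward-pointing along the incircle, equivalently $\tfrac{d}{dt}\big(\tfrac12(\sum_i x_i)^2-\sum_i x_i^2\big)\geq0$ whenever that quantity vanishes. Finally, $K,L,M$ lie on the incircle, so the attractor $KLM$ of \cite{ricci} is inscribed in it and hence contained in the disk.

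\textbf{Item (2).} Identify $\mathfrak{z}(\k)\cong\R^2$ so the $T$-roots become vectors $\alpha_1,\alpha_2,\alpha_3=\alpha_1+\alpha_2$. A triple $h=(x_1,x_2,x_3)$ lies in $\mathcal{M}^{(2)}$ exactly when the unique symmetric $2\times2$ matrix $M=M(h)$ solving $\alpha_i^{\top}M\alpha_i=x_i$ for $i=1,2,3$ — a linear function of $h$ — is positive semidefinite, and then $h$ is realized by the adjoint orbit in $\g^2$ of the pair $(\xi_1,\xi_2)$ formed by the columns of $M(h)^{1/2}$; this is the mechanism behind Theorem~\ref{ithm:convex}. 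Since $M\mapsto M^{1/2}$ is continuous on the positive semidefinite cone, a flow line $t\mapsto h_t$ inside the disk — which it stays in by Item~(1) — is realized by the continuous family $\phi_t\colon gK\mapsto(\Ad\oplus\Ad)(g)(\xi_1(t),\xi_2(t))$ of adjoint orbits in the fixed space $\g^2$; and by \cite{ricci} each such flow line converges to an equilibrium in $\overline{KLM}$. Along a flow line converging to a vertex $K$, $L$ or $M$, the limit $M$ has rank one, the limiting pair $(\xi_1,\xi_2)$ sits on a $T$-root wall, and $\lim_t\phi_t(G/K)$ is the corresponding lower-dimensional flag — which by \cite{ricci} is exactly the Gromov--Hausdorff collapse there — so the continuous family $\phi_t$ realizes that collapse as well.

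\textbf{Item (3).} Here the statement is that no \emph{single} Euclidean space carries a continuous family of equivariant isometric embeddings along a flow line collapsing to $O$ (or $P$, $Q$). An equivariant isometric embedding of $(G/K,h)$ into $\R^N$ with orthogonal $G$-action is the orbit of a $K$-fixed vector $v$; decomposing $\R^N=\bigoplus_a V_a$ into $G$-irreducibles and writing $v=\sum_a v_a$, the induced metric satisfies $\dim(\m_i)\,x_i=\sum_a\langle\mathcal{P}_{i,a}v_a,v_a\rangle$, where $\mathcal{P}_{i,a}=-\sum_{X}d\pi_a(X)^2\succeq0$ (sum over an orthonormal basis of $\m_i$) acts on $V_a^{K}$ and $\sum_i\mathcal{P}_{i,a}=\mathrm{Cas}(V_a)\,\mathrm{Id}$ on $V_a^{K}$. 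The structural fact is that $[\m_i,\m_j]=\m_k$ for $\{i,j,k\}=\{1,2,3\}$ (the $A_2$-relation, using that the isotropy summands are irreducible and pairwise inequivalent), so $\k,\m_i,\m_j$ generate $\g$; hence, if $O$ is the vertex at which $\m_2,\m_3$ shrink, then $v\in V_a^{K}$ with $\langle(\mathcal P_{2,a}+\mathcal P_{3,a})v,v\rangle=0$ forces $d\pi_a(\g)v=0$, i.e. $\mathcal{P}_{2,a}+\mathcal{P}_{3,a}\succ0$ on $V_a^{K}$ for every non-trivial $V_a$. Writing $\epsilon(V_a)>0$ for its least eigenvalue on $V_a^{K}$, each isotypic contribution satisfies $\big(\dim(\m_2)x_2+\dim(\m_3)x_3\big)\geq\tfrac{\epsilon(V_a)}{\mathrm{Cas}(V_a)}\sum_i\dim(\m_i)x_i$, and summing over $a$ (trivial summands contribute nothing) yields $\big(\dim(\m_2)x_2+\dim(\m_3)x_3\big)\big/\sum_i\dim(\m_i)x_i\geq\min_a\tfrac{\epsilon(V_a)}{\mathrm{Cas}(V_a)}>0$, the minimum over the \emph{finitely many} irreducibles present in $\R^N$. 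Thus the metrics realizable in a fixed $\R^N$ stay at positive projective distance from the vertex $O$, while by \cite{ricci} there are flow lines with $[x_1(t):x_2(t):x_3(t)]\to O$; so past some time no fixed $\R^N$ realizes such a flow line, a fortiori not its collapse, and likewise for $P,Q$. The contrast with $K,L,M$ is that there a single summand $\m_k$ shrinks, $\k\oplus\m_k$ is a \emph{proper} subalgebra with subgroup $K_k$, and $\ker\mathcal{P}_{k,a}\supseteq V_a^{K_k}$ is non-zero for suitable $V_a$ — exactly why those collapses do fit into the fixed $\g^2$, as in Item~(2).

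\textbf{Anticipated difficulties.} The two genuinely non-routine points are: checking that $\mathcal{M}^{(1)}$ sweeps the whole incircle and that the disk is forward invariant — the latter the only real ODE computation and, I expect, the most laborious step; and, in Item~(3), the generation statement $\langle\k,\m_i,\m_j\rangle=\g$ together with the extraction of the uniform bound from the partial Casimirs. In Item~(2) the one subtle point is to match the Hausdorff limit of the embedded orbits with the intrinsic Gromov--Hausdorff collapse of \cite{ricci}, which we take from there.
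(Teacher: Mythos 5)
Your geometric identification of $\mathcal{M}^{(2)}$ in Item~(1) matches the paper: you recover the same cone $x_1^2+x_2^2+x_3^2=2(x_1x_2+x_1x_3+x_2x_3)$ (equivalently $\sum x_i^2=\tfrac12(\sum x_i)^2$) by evaluating $\mu^{(1)}$ on $T$-root data, whereas the paper gets it by pulling back $\det=0$ through the explicit linear map $\mu$. Item~(2) likewise follows the paper's route: the symmetric square root furnishes a continuous section $\tau=\sigma\circ\mu^{-1}$ of $\mu^{(2)}$, and the passage from Hausdorff limits of embedded orbits to the intrinsic Gromov--Hausdorff collapse is exactly what Theorem~\ref{ithm:convergence}(2) is for.

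The genuine gap is in Item~(1): you write that forward invariance of the disk ``is a direct check with the explicit flow,'' but this check is the main computational content of the paper's corresponding Proposition. It is not uniform across the Type~II family: one must take the concrete Ricci vector fields of \cite{ricci} for types $A$, $D$, $E$ separately and verify $R(x,y,z)\cdot\nabla F(x,y,z)\le 0$ on the cone boundary $F=0$, which the paper does via algebraic identities such as $(-x+y+z)(x-y+z)(x+y-z)=-8xyz$ on $F=0$. Without carrying out that case-by-case computation, Item~(1)---and hence the hypothesis of Item~(2)---is not established; your own ``anticipated difficulties'' paragraph flags it correctly, but flagging is not proving.

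For Item~(3) you take a genuinely different and attractive route. The paper goes through its general collapsing Theorem~\ref{ithm:convergence}(3): the limit of a sequence of induced orbit metrics in a fixed $V$ must have kernel a subalgebra modulo $\k$, whereas at $O,P,Q$ the two shrinking summands $\m_i\oplus\m_j$ fail to be a subalgebra with $\k$ (checked against the tables of \cite{ricci}). Your argument instead decomposes $V$ into $G$-irreducibles, uses partial Casimirs $\mathcal P_{i,a}$ with $\sum_i\mathcal P_{i,a}=\mathrm{Cas}(V_a)\,\mathrm{Id}$ on $V_a^K$, and deduces a uniform positive lower bound on $(\dim\m_i\,x_i+\dim\m_j\,x_j)/\sum_k\dim\m_k\,x_k$ for any orbit metric in a fixed $V$; this is quantitative and self-contained, at the price of needing the stronger structural fact that $\k,\m_i,\m_j$ \emph{generate} $\g$. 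That fact is true and should be spelled out: since $\k\oplus\m_i\oplus\m_j$ is not a subalgebra, $[\m_i,\m_j]$ has a nonzero component in $\m_k$, and by $\Ad(K)$-equivariance and irreducibility of $\m_k$ that component is all of $\m_k$; hence the generated subalgebra contains $\k\oplus\m_1\oplus\m_2\oplus\m_3=\g$. With that added, your Item~(3) is a legitimate alternative that avoids the Gromov--Hausdorff machinery altogether and gives, as a bonus, an explicit positive lower bound in terms of the irreducible content of $V$.
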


The collapsing results of items 2 and 3 above are obtained by studying Hausdorff limits of equivariant realizations of a general homogeneous space $G/K$, we also comment on non-equivariant ones. In order to state our results, fix a $G$-invariant metric in $G/K$. Let $\k$ be the isotropy subalgebra and take the orthogonal decomposition $\g=\m\oplus\mathfrak k$. Identify $G$-invariant metrics in $G/K$  with the space of $\Ad(K)$-invariant inner products in $\m$. 
Now fix $G\to O(V)$ an orthogonal representation of $G$ in an Euclidean space $V$. Given $v \in V$, denote its isotropy subgroup by
	$$
	G_v = \{ g \in G:~ gv = v \}
	$$
so that the representation induces a $G$-equivariant realization $i: G/G_v \to Gv \subseteq V$ with the induced metric.


	
	\begin{teorema}\label{ithm:convergence}
		Let $g_n$ be a sequence of $K$-invariant inner products in $\m$ converging to $g$,  a positive semi-definite bilinear form in $\m$ and consider
$$
\ker g = \{X\in \mathfrak m~:~g(X,\mathfrak m)=0\}
$$
		Then,
		\begin{enumerate}
			\item The Gromov-Hausdorff limit of $(G/K,d_{g_n})$ does not depend on $g_n$, only on the limit $g$, where $d_{g_n}$ is the geodesic distance induced by $g_n$;
			\item If $v_n \in V$ converges to $v \in V$, put $K_n = G_{v_n}$, $K = G_v$ and let $g_n$ be induced by the  natural embedding $i_n:G/K_n \to Gv_n$. Then $(G/K_n,d_{g_n})$ converges to $(G/K,d_g)$ in the Gromov-Hausdorff sense;
			\item Suppose that $\ker g\oplus\mathfrak{k}$ is not a subalgebra and let $K_n = G_{v_n}$. Then, there is no sequence of isometric equivariant embeddings $i_n:(G/K_n,g_n)\to (V,\tilde g)$ such that  $g_n\to g$.
		\end{enumerate}
	\end{teorema}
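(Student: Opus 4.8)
The plan is to lift every metric to $G$ and reduce all three items to a single statement on convergence of geodesic distances, and then to use that for an \emph{equivariant} realization the degeneracy locus of the relevant tensor is forced to be a subalgebra.

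Fix a bi-invariant metric $Q$ on $G$ and put $\m=\k^{\perp}$; extend each $K$-invariant inner product $g_n$ on $\m$ to the left-invariant metric $\hat g_n$ on $G$ equal to $g_n$ on $\m$ and to $Q$ on $\k$, $Q$-orthogonally, so that $K$ acts isometrically on $(G,\hat g_n)$ on the right with quotient $(G/K,g_n)$ and quotient distance $d_{g_n}$. As $g_n\to g$ one gets $\hat g_n\to\hat g$, a positive semi-definite left-invariant tensor with $\ker\hat g|_e=\ker g$. The crux is
\begin{equation}\label{eq:unif}
d_{\hat g_n}\longrightarrow d_{\hat g}\qquad\text{uniformly on }G\times G,
\end{equation}
$d_{\hat g}$ being the geodesic pseudo-distance of $\hat g$, i.e.\ the Carnot--Carath\'eodory distance of the distribution $\ker\hat g$ with its transverse metric. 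For \eqref{eq:unif} it suffices to prove pointwise convergence, since $d_{\hat g_n}\le\sqrt{\sup_n\|\hat g_n\|_Q}\,d_Q$ makes the $d_{\hat g_n}$ equi-Lipschitz for $d_Q$ and $G$ is compact. The bound $\limsup_n d_{\hat g_n}(x,y)\le d_{\hat g}(x,y)$ is routine (transport a rectifiable path from $x$ to $y$ of $\hat g$-length close to $d_{\hat g}(x,y)$ and use $|\sqrt a-\sqrt b|\le\sqrt{|a-b|}$); the reverse bound is the main obstacle, because $\hat g_n$-minimizers need not be equi-Lipschitz for $Q$ once the spaces collapse. For flag manifolds it is elementary: pairwise inequivalence of the isotropy summands forces the $g_n$, hence the $\hat g_n$, to be simultaneously diagonal, so with $g=\sum_i x_ib_i$, $g_n=\sum_i x_i^{(n)}b_i$ in an adapted frame one has $\hat g_n\ge c_n\hat g$ with $c_n=\min\bigl(1,\min_{x_i>0}x_i^{(n)}/x_i\bigr)\to1$, whence $d_{\hat g_n}\ge\sqrt{c_n}\,d_{\hat g}$. (Some hypothesis of this kind is genuinely needed: for a family degenerating with non-trivial off-diagonal terms even on a flat $2$-torus the cross-terms drive extra collapse and the Gromov--Hausdorff limit depends on the sequence.) Granting \eqref{eq:unif}, compactness of $K$ passes the uniform convergence to the quotients, so $(G/K,d_{g_n})$ converges in the Gromov--Hausdorff sense to the quotient of $(G,d_{\hat g})$ by $K$, which depends only on $\hat g$, hence only on $g$ — explicitly $G/\overline H$, $\overline H$ the closed subgroup whose Lie algebra is generated by $\k\oplus\ker g$, with the metric $g$ induces on a transversal. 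This is item (1).

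Item (2) runs through the same machinery, the point being that the inequality $\hat g_n\ge(1-o(1))\hat g$ is available there without diagonality. The lift of the orbit metric is $\hat g_n(X,X)=|X\!\cdot\! v_n|_V^2$, which vanishes \emph{identically} — not merely tends to zero — along $\g_{v_n}=\k_n$. Moving $v_n$ within its orbit so that $v_n-v$ lies in a slice at $v$ (so $v_n-v\perp T_v(Gv)$), skew-adjointness of the operators $w\mapsto X\!\cdot\! w$ together with $[\k,\m]\subseteq\m$ show that the entries of $\hat g_n$ coupling $\k$ to $\m$ are $O(|v_n-v|^2)$, of the same order as the collapsing entries along $\k$; since the surviving block tends to $g$, which is \emph{positive definite} on $\m$ here (as $\ker g=\m\cap\g_v=0$), this yields $\hat g_n\ge(1-o(1))\hat g$ and hence $d_{\hat g_n}\to d_{\hat g}$ uniformly. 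As $v_n\to v$, the tensor $\hat g(X,X)=|X\!\cdot\! v|_V^2$ has kernel the \emph{subalgebra} $\g_v=\k$, so the quotient of $(G,d_{\hat g})$ is exactly $G/K$ with the intrinsic metric $d_g$ of the orbit $Gv$; by the argument of item (1) this is the Gromov--Hausdorff limit of $(G/K_n,d_{g_n})$.

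Item (3) is a contradiction argument. Suppose there exist orthogonal $G$-actions $\rho_n$ on $V$, points $v_n\in V$ with $K_n:=G_{v_n}$, and isometric $\rho_n$-equivariant embeddings $i_n\colon(G/K_n,g_n)\to(V,\tilde g)$ whose induced metrics satisfy $g_n\to g$ (then necessarily $K_n\to K$ and, after a subsequence, $v_n\to v$). Since $V$ is fixed, only finitely many isomorphism classes of orthogonal $G$-representations occur on it, so after passing to a subsequence and conjugating by elements of $\O(V)$ we may take $\rho_n=\rho$; then $i_n$ is the orbit map onto $\rho(G)w_n$ with $w_n=i_n(eK_n)$, $G_{w_n}=K_n$, and lift $\hat g_n(X,X)=|X\!\cdot\! w_n|_V^2$. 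Were the component of $w_n$ off the $G$-fixed subspace of $V$ unbounded, then $\hat g_n(X,X)\to\infty$ for a suitable $X$, contradicting $\hat g_n\to\hat g$; so $w_n$ stays in a compact set and, after a further subsequence, $w_n\to w$ and $\hat g_n\to\hat g$ with $\hat g(X,X)=|X\!\cdot\! w|_V^2$, whose kernel at $e$ is the isotropy subalgebra $\g_w$. But $\k=\lim\k_n=\lim\g_{w_n}\subseteq\g_w$, and $g=\hat g|_{\m}$ gives $\ker g=\m\cap\g_w$, so $\k\oplus\ker g=\g_w$ — a subalgebra, contradicting the hypothesis. Hence no such sequence of equivariant isometric realizations exists.
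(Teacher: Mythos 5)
Your proof takes a genuinely different route from the paper's. The paper proves all three items by invoking Theorem~2.2 of \cite{ricci} (restated as Theorem~\ref{thm:collapse}) as a black box: item~(1) is ``immediate'' from it, and items~(2)--(3) reduce to it by combining Bredon's orbit-type finiteness theorem and a slice argument to fix the isotropy type, after which one checks that the limit kernel $\ker g$ together with $\k$ is exactly $\g_v$, a subalgebra. You instead work entirely on the group, lift the metrics to (semi-)definite left-invariant tensors $\hat g_n$ on $G$, and reprove the convergence of distances from scratch by sandwiching $d_{\hat g_n}$ between $\sqrt{c_n}\,d_{\hat g}$ and $d_{\hat g}$. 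What your approach buys is a more self-contained and geometrically transparent argument, and it makes explicit a point the paper leaves implicit: the pointwise lower bound $\hat g_n\ge c_n\hat g$ is what singles out the ``right'' Gromov--Hausdorff limit. You use the multiplicity-free isotropy of flag manifolds (forcing simultaneous diagonality of the $g_n$) to establish it for item~(1), and a slice-neighbourhood estimate on the $\k$--$\m$ coupling terms of the pullback tensor for item~(2); in both cases the argument is correct in outline, although the item~(2) estimate $\hat g_n\ge(1-o(1))\hat g$ deserves a line more of justification (it follows from Cauchy--Schwarz applied to the PSD form $\hat g_n$ together with $\langle Xw_n,Yv\rangle=0$ for $X\in\k$, $w_n$ in the slice, and positivity of $g$ on $\m$). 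Your item~(3) is essentially the paper's contradiction, but you make explicit two details the paper's two-line proof glosses over: that a fixed finite-dimensional $V$ carries only finitely many isomorphism classes of orthogonal $G$-representations, and that the orbit points $w_n$ must stay bounded lest $\hat g_n$ blow up. Finally, your parenthetical warning that item~(1) is not true for arbitrary degenerating sequences — the flat $2$-torus with non-diagonal cross-terms — is a genuine observation; it shows that the theorem as stated implicitly uses the $K$-invariance plus multiplicity-free isotropy (which forces diagonality) and that the cited Theorem~2.2 of \cite{ricci} should be read with that hypothesis in mind. In short: correct in substance, genuinely different in method, and it surfaces a hypothesis the paper's brief proof does not mention.
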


	The structure of the article is as follows. In Section 2 we fix notation and conventions about flag manifolds and recall both the homogeneous  and projected Ricci flows. Section 3 is dedicated to study metrics induced by the adjoint representation and its products. In Section 4 we study the corresponding geometry to get Theorem \ref{ithm:convex}. In Section 5 we specialize to three isotropic summands and prove the first part of Theorem \ref{ithm:figura}.
	In Section 6 we go back to general homogeneous spaces, discuss metric limits and prove Theorem \ref{ithm:convergence} and then use it to prove the collapsing part of Theorem \ref{ithm:figura}.
	
	\section{Preliminaries}
	\label{sec-prelim}
	
	\subsection{The Ricci flow of invariant metrics}
	\label{general-ricci-flow}
	A family of Riemannian metrics $g(t)$ in a manifold $M$ is called a Ricci flow if it satisfies
	\begin{equation}
		\label{ricci-flow}
		\frac{\partial g}{ \partial t}=-2\Ric(g).
	\end{equation}
	
	Further suppose that $M$ is a compact homogeneous space with connected isotropy subgroup, $M=G/K$. Then, its set of $G$-invariant metrics is in one-to-one correspondence with $\Ad_G(K)$-invariant inner products at the basepoint $b=K$, obtained by restricting a metric $g$ to its value $g_b$ at $b$.  Furthermore, if $g$ is $G$-invariant, so it is its  Ricci tensor $\Ric (g)$ and  scalar curvature $S(g)$. Thus, they  are also  completely determined by their values at $b$, $\Ric(g)_b = \Ric(g_b)$, $S(g)_b=S(g_b)$. Taking this into account, the Ricci flow equation (\ref{ricci-flow}) becomes the autonomous ordinary differential equation known as the \textit{homogeneous Ricci flow}:
	\begin{equation}
		\label{inv-ricci-flow}
		\frac{dg_b}{dt}=-2\Ric(g_b).
	\end{equation}

	On the other hand, $(G/K,g)$  has  essentially the same geometry  if $g$ is rescaled   by a constant $\lambda > 0$. Moreover, 
	$\Ric(\lambda g) = \Ric(g)$. It follows that the Ricci operator $r(g)$, given by
	\begin{equation}
		\label{eq-ricciop}
		\Ric(g)(X, Y) = g( r(g)X, Y)
	\end{equation}
	is homogeneous of degree $-1$: $r(\lambda g) = \lambda^{-1} r(g)$. So does the scalar curvature $S(g) = \mathrm{tr}(r(g))$. 
	One can \textit{gauge away} the scale $\lambda$ by normalizing the flow. 
	For instance, if $M$ is compact, orientable with $\dim M = d$, one can consider the unit-volume normalized flow (see \cite{bohm}):
	\begin{equation}
		\label{normaliz-ricci-flow}
		\frac{dg_b}{dt}=-2\left( \Ric(g_b) - \frac{S(g_b) }{ d } g_b \right),
	\end{equation}
	which  is the gradient flow of $g_b\mapsto S(g_b)$, when $S$ is restricted to the unit-volume metrics.  
	The equilibria of (\ref{normaliz-ricci-flow}) are precisely the metrics satisfying $\Ric(g) = \lambda g$, $\lambda \in \R$, i.e., the so called {\em Einstein metrics}.  On the other hand, the unit volume Einstein metrics are precisely the critical points of the functional $S(g_g)$  on the space of unit volume metrics (see \cite{ziller}).  
	
	As in the unit-volume normalization \eqref{normaliz-ricci-flow}, one can normalize \eqref{inv-ricci-flow} by choosing an hypersurface in the (finite dimensional) space of homogeneous metrics which is transversal to the semi-lines $\lambda\mapsto \lambda g_b$. In the aforementioned case, the hypersurface consists on unit volume metrics and is unbounded. In order to study the limiting behavior of the Ricci flow, in Section \ref{preliminaries} we will normalize it, instead, to a simplex and rescale it to get a polynomial vector field, following \cite{ricci}.
	
	
	Recall that the submersion $G \to G/K$, $g \mapsto gb$, differentiates to the surjective map
	\begin{gather}\label{the-above-map}
	\g \to T_b(G/K) \qquad X\mapsto X \cdot b = d/dt (\exp(tX)b) |_{t=0}
	\end{gather}
	with kernel $\k$, the isotropy subalgebra. 
	Using that $g \in G$ acts in tangent vectors by its differential, we have that
	\begin{equation}
		\label{eq-induzido}
		g( X \cdot b ) = ( \Ad(g)X ) \cdot g b.
	\end{equation}
	In what follows we assume that the homogeneous space $M=G/K$ has connected isotropy $K$ and is \textit{reductive}, that is, $\g$ decomposes as $\mathfrak{g}=\mathfrak{k}\oplus\mathfrak{m}$ with $[\mathfrak{k},\mathfrak{m}]\subset \mathfrak{m}$. It follows that $\m$ is $\Ad_G(K)$-invariant. By  (\ref{eq-induzido}), we conclude that the restriction of \eqref{the-above-map} to $\m \to T_b(G/K)$ is a linear isomorphism that intertwines the isotropy representation of $K$ in $T_b(G/K)$ with the adjoint representation $\Ad_G(K)$  restricted to  $\m$.  This allows us to identify the $K$-isotropy representation on $T_b(G/K)$   with the  $\Ad_G(K)$-representation in $\m$.
	
	We further assume that $G$ is a  compact connected semisimple Lie group and that the isotropy representation of $G/K$ decomposes $\mathfrak{m}$ in $n$ summands
	\begin{equation}\label{deco-iso}
		\mathfrak{m}=\mathfrak{m}_1\oplus \ldots \oplus \mathfrak{m}_n,
	\end{equation}
	where $\mathfrak{m}_1,...,\mathfrak{m}_n$ are  irreducible pairwise non-equivalent isotropy representations.
	A source of examples satisfying the assumptions above are {\em flag manifolds of compact Lie groups} (see Section \ref{sec:flag} for details). With the assumptions above,  all invariant metrics are given by
	\begin{align}
		\label{eq-compon-metr}
		g_b&=x_1B_1+\ldots + x_nB_n, 
	\end{align}
	where $x_i>0$ and $B_i$ is the restriction of the (negative of the) Cartan-Killing form of $\mathfrak{g}$ to $\mathfrak{m}_i$. We also have that $r_k=r(g)|_{\mathfrak{m}_k}$ is a multiple of the identity, since it commutes with the isotropy representation. Therefore, 
	\begin{align}
		\label{eq-compon-ricci}
		\Ric (g_b)&=y_1 B_1 + \ldots + y_nB_n
	\end{align}
	where 
	\[y_i=x_ir_i.\]
	Therefore, the Ricci flow equation (\ref{inv-ricci-flow}) becomes the autonomous system of ordinary differential equations
	\begin{equation}
		\label{eq-ricci-flow-coords}
		\frac{dx_k}{dt}= -2 r_kx_k, \qquad \ k=1,\ldots , n.
	\end{equation}
	%

	\subsection{Projected Ricci flow}\label{preliminaries} 
	
	Here we gather some information about the projected Ricci flow approach introduced in \cite{ricci}. Let $R(x)$ be the vector field of the Ricci flow (\ref{eq-ricci-flow-coords}), for short {\em the Ricci vector field}, which is a rational function of $x$ and homogeneous of degree $0$. Given $x = (x_1, \ldots, x_n) \in \mathbb{R}_+^n$, $x$ and $\lambda x$ describe essentially the same geometry. In order to take advantage of this and  the rationality of $R(x)$, we normalize the latter  to a simplex and rescale it to get a polynomial vector field.  
	
	Consider the linear scalar function
	$$
	\overline{x} = x_1 + \cdots + x_n,
	$$
	whose level set $\overline{x} = 1$ in  $\R^n_+$ is the open canonical $n$-dimensional simplex ${\mathcal T}$.
	
	\begin{proposicao}[Corollary 4.3, \cite{ricci}]
		\label{thm-rescaling}
		The solutions of
		\begin{equation}
			\label{eq-R1}
			\frac{d x }{dt}= R(x), \qquad x \in \R^n_+
		\end{equation}
		can be rescaled in space and reparametrized in time to solutions of
		\begin{equation}
			\label{eq-R-projetada}
			\frac{d x }{dt}= R(x) - \ov{R(x)}x, \qquad x  \in {\mathcal T}
		\end{equation}
		and vice-versa.	Furthermore, if $x\in\mathcal T$, then $R(x) = \lambda x$ for some  $\lambda \in \R$  if and only if $x$ is an equilibrium for (\ref{eq-R-projetada}).
		
	    Moreover, there exists a function which is strictly decreasing on non-equilibrium solutions of the normalized flow \eqref{eq-R-projetada}. In particular, the projected Ricci flow does not have non-trivial periodic orbits.
	\end{proposicao}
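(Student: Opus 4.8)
The plan is to recognize \eqref{eq-R-projetada} as the projectivization of \eqref{eq-R1} and then to extract a Lyapunov function from the scalar curvature. For the first claim, start from a solution $x(t)$ of \eqref{eq-R1} and write $x=\rho\,y$ with $\rho=\ov{x}>0$ and $y=x/\ov{x}\in{\mathcal T}$. Since $R$ is homogeneous of degree $0$ we have $R(x)=R(\rho y)=R(y)$, so differentiating gives $\dot\rho\,y+\rho\,\dot y=R(y)$; applying the linear functional $\ov{\,\cdot\,}$ and using $\ov{y}=1$, $\ov{\dot y}=0$ yields $\dot\rho=\ov{R(y)}$ and hence $\dot y=\rho^{-1}\bigl(R(y)-\ov{R(y)}\,y\bigr)$. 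Reparametrizing time by $ds/dt=\rho^{-1}>0$ turns the $y$-equation into \eqref{eq-R-projetada}, while $\rho$ obeys the linear equation $d\rho/ds=\rho\,\ov{R(y)}$. Conversely, from a solution $y(s)$ of \eqref{eq-R-projetada} one solves $d\rho/ds=\rho\,\ov{R(y(s))}$ with $\rho(0)>0$, obtaining $\rho(s)=\rho(0)\exp\!\int_0^s\ov{R(y)}\,du>0$; then $x=\rho\,y$ together with the reparametrization $dt/ds=\rho>0$ gives, by the same computation, $dx/dt=R(y)=R(x)$. The only things to check here are that both reparametrizations are orientation-preserving and that $\rho$ stays positive, both immediate.

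For the characterization of equilibria: if $x\in{\mathcal T}$ then $\ov{x}=1$, so $R(x)=\lambda x$ forces $\lambda=\ov{R(x)}$ and thus $R(x)-\ov{R(x)}x=0$; conversely an equilibrium satisfies $R(x)=\ov{R(x)}x$, i.e.\ $R(x)=\lambda x$ with $\lambda=\ov{R(x)}$. For the decreasing function, put $d_i=\dim\m_i$ (so $d_1+\cdots+d_n=d$) and consider the scale-invariant scalar curvature
\[
\Phi(x)=-\,S(g)\,\bigl(x_1^{d_1}\cdots x_n^{d_n}\bigr)^{1/d},\qquad g=x_1B_1+\cdots+x_nB_n,
\]
which is homogeneous of degree $0$, hence descends to ${\mathcal T}$ and is constant along the radial rescaling above. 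Along \eqref{eq-R1} one has $\frac{d}{dt}\log\bigl(x_1^{d_1}\cdots x_n^{d_n}\bigr)=\sum_i d_i(-2r_i)=-2S$, and the evolution of scalar curvature under the (spatially constant, since homogeneous) Ricci flow gives $\dot S=2|\Ric(g)|_g^2$; combining,
\[
\frac{d}{dt}\Bigl(S\,\bigl(x_1^{d_1}\cdots x_n^{d_n}\bigr)^{1/d}\Bigr)=2\bigl(x_1^{d_1}\cdots x_n^{d_n}\bigr)^{1/d}\Bigl(|\Ric(g)|_g^2-\tfrac1d S^2\Bigr)\ge 0,
\]
by Cauchy--Schwarz applied to the eigenvalues $r_1,\dots,r_n$ of the Ricci operator (counted with multiplicities $d_i$), with equality precisely at Einstein metrics, i.e.\ precisely when $R(x)=\lambda x$. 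Hence $\Phi$ is non-increasing along \eqref{eq-R1}, strictly decreasing off the Einstein locus; transporting this through the positive time-reparametrization and the radial rescaling (under which $\Phi$ is invariant), and identifying the Einstein locus with the equilibria of \eqref{eq-R-projetada} by the previous step, we conclude that $\Phi$ strictly decreases along every non-equilibrium solution of \eqref{eq-R-projetada}. In particular \eqref{eq-R-projetada} has no non-constant periodic orbit.

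The substantive point is the last step: one must exhibit the correct scale-invariant functional and prove its monotonicity. This rests on two inputs --- the evolution equation $\dot S=2|\Ric|_g^2$ for the homogeneous Ricci flow (equivalently, that the volume-normalized flow is the gradient flow of the scalar curvature on constant-volume metrics), and the sharp inequality $S^2\le d\,|\Ric|_g^2$ with equality iff Einstein. Everything else --- the projectivization in the first step and the bookkeeping with equilibria --- is routine. If one wishes to avoid citing Hamilton's formula, $\dot S$ can instead be computed directly from the explicit coordinate expressions for the $r_i$ on flag manifolds, at the cost of a longer calculation.
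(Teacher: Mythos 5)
The paper does not actually prove this proposition: it is quoted verbatim from \cite{ricci} (Corollary~4.3 there) with no argument reproduced, so there is no in-text proof to compare yours against. Judged on its own, your proposal is correct. The projectivization step (decompose $x=\rho y$ with $\rho=\ov{x}$, use degree-$0$ homogeneity of $R$, apply the linear functional $\ov{\;\cdot\;}$, and absorb the $\rho^{-1}$ into a time reparametrization) is exactly the standard way to relate \eqref{eq-R1} and \eqref{eq-R-projetada}, and your equilibria bookkeeping is right.

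For the Lyapunov function, your choice $\Phi(x)=-S(g)\bigl(x_1^{d_1}\cdots x_n^{d_n}\bigr)^{1/d}$ is the natural scale-invariant version of $-S$: since $\bigl(\prod x_i^{d_i}\bigr)^{1/2}$ is the volume, $\Phi$ is $-S\cdot\mathrm{vol}^{2/d}$, a degree-$0$ function that descends to ${\mathcal T}$. The computation $\frac{d}{dt}\log\prod x_i^{d_i}=-2S$ is immediate from \eqref{eq-ricci-flow-coords}, and the monotonicity then reduces to $\dot S=2\lvert\Ric\rvert_g^2$ together with $S^2\le d\,\lvert\Ric\rvert_g^2$ (equality iff Einstein). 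Both are solid, but you lean on the evolution equation for $S$ as an external input. Two remarks. First, for compact homogeneous metrics $\dot S=2\lvert\Ric\rvert_g^2$ drops out of the first variation of the total scalar curvature $\int S\,dV$ with $h=-2\Ric$, a finite-dimensional algebraic identity that needs no PDE machinery, so the reliance on ``Hamilton's formula'' is less heavy than your closing remark suggests. Second, and more to the point, the paper already records (after \eqref{normaliz-ricci-flow}, citing \cite{bohm}) that the unit-volume normalized flow is the gradient flow of $S$ on unit-volume metrics; since your $\Phi$ restricted to that slice is $-S$ up to a constant, strict monotonicity follows directly from that gradient-flow statement without any curvature evolution computation at all, with equality cases handled by identifying the critical points of $S$ on the slice with Einstein metrics. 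Either route is valid; the gradient-flow one is shorter given what the paper already quotes.
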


	\begin{obs} The unit-volume normalization \eqref{normaliz-ricci-flow} can be obtained in an analogous way, by replacing the functional $\bar x$ with $vol(x)$, the latter being  positive and homogeneous of degree $d/2$ in $x$. Indeed, in this case,  $\ov{R(x)}$ in \eqref{eq-R-projetada} is replaced by the multiple of the scalar curvature $-2 S(x)/d$,  recovering \eqref{normaliz-ricci-flow}.
 	\end{obs}
	
	
To study the limiting behavior of \eqref{eq-R-projetada} on ${\cal T}$, 
it is convenient to multiply it by an appropriate positive function $f: \mathbb{R}_+^n \to \mathbb{R}_+$
in order to get a homogeneous polynomial vector field $X(x)$ defined in the closure of ${\cal T}$ and tangent to the boundary of $\mathcal T$, given by
	\begin{eqnarray}
	\label{campo-X}
	X(x) & = & f(x)\left (R(x) - \overline{R(x)}\, x \right ) \label{eq-def-X} \\
	& = & (fR)(x) - \overline{(fR)(x)}\, x \nonumber
	\end{eqnarray}
since $\overline{x}$ is linear.  In particular, solutions of the new field in the interior of $\mathcal T$ are time-reparametrizations of \eqref{eq-R-projetada}. 
Therefore, to get a polynomial vector field $X$, it suffices to choose $f$ such that $(fR)(x) = f(x) R(x)$ is a polynomial vector field.
Moreover, in order for $X$ to be tangent to the boundary of $\mathcal T$, it is sufficient that the $i$-th coordinate of $(fR)(x)$ vanishes whenever the $i$-th coordinate does or, equivalently, that each coordinate hyperplane $\Pi_i = \{ x:~x_i = 0\}$ is invariant by the flow of $fR$.  Given a subset of indexes $I \subseteq \{ 1, \ldots, n \}$, consider the subspace $\Pi_I = \cap_{i \in I} \Pi_i$ and let ${\cal T}_I = \mathrm{cl}({\cal T}) \cap \Pi_I$ be the $I$-th face of the simplex ${\cal T}$. 
Note that ${\cal T}_\varnothing = \mathrm{cl}({\cal T})$.


	\begin{proposicao}[Proposition 4.3, \cite{ricci}]
		\label{propos-estratific}
		If $fR$ is tangent to each hyperplane $\Pi_i$, then each face ${\mathcal T}_I$ of ${\mathcal T}$ is invariant by the flow of $X$.  In particular, $\mathrm{cl}({\mathcal T})$ is invariant and its vertices are fixed points. 
	\end{proposicao}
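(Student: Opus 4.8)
The plan is to work directly with the polynomial vector field $X=(fR)-\overline{(fR)}\,x$ and to combine two tangency properties: tangency to the affine hyperplane $\{\overline{x}=1\}$, which holds automatically by the way $X$ is built, and tangency to every coordinate hyperplane $\Pi_i$, which is exactly the hypothesis on $fR$. A standard confinement argument for flows of locally Lipschitz vector fields then upgrades these to invariance of each face.

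First I would record the two tangency facts. Along an integral curve $x(t)$ of $X$, linearity of $\overline{\,\cdot\,}$ gives $\tfrac{d}{dt}\bigl(\overline{x(t)}-1\bigr)=\overline{X(x(t))}=\overline{(fR)(x)}\,\bigl(1-\overline{x}\bigr)=-\overline{(fR)(x)}\,\bigl(\overline{x}-1\bigr)$, a (time-dependent) linear ODE for the scalar $\overline{x}-1$; hence $\overline{x(0)}=1$ forces $\overline{x(t)}\equiv 1$. For a fixed index $i$, on $\Pi_i=\{x_i=0\}$ the $i$-th component of $X$ equals $(fR)_i-\overline{(fR)}\,x_i=(fR)_i$, which vanishes there because $fR$ is tangent to $\Pi_i$; so $X$ is tangent to $\Pi_i$, and $\Pi_i$ is invariant under its flow in both time directions (ODE flows are reversible).

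Next I would assemble these into invariance of $\mathcal T_I=\mathrm{cl}(\mathcal T)\cap\Pi_I=\{x:\ x_j\ge 0\ \text{for all }j,\ x_j=0\ \text{for }j\in I,\ \overline{x}=1\}$. Let $p\in\mathcal T_I$ and let $x(t)$ be the maximal integral curve with $x(0)=p$. For $j\in I$, $p\in\Pi_j$ forces $x_j(t)\equiv 0$. For $j\notin I$ with $x_j(0)>0$: if $x_j(t_0)=0$ for some $t_0$, then $x(t_0)\in\Pi_j$, and invariance of $\Pi_j$ would give $x_j\equiv 0$, a contradiction; so $x_j(t)>0$ for all $t$; and if $x_j(0)=0$ then $x_j\equiv 0$ likewise. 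Thus $x_j(t)\ge 0$ for every $j$, and with $\overline{x(t)}\equiv 1$ from the first step we get $x(t)\in\mathcal T_I$ throughout the maximal interval. Since $\mathcal T_I$ is compact, the solution does not escape, so the maximal interval is all of $\R$ and $\mathcal T_I$ is invariant; taking $I=\varnothing$ gives invariance of $\mathrm{cl}(\mathcal T)$. Finally, at a vertex $e_i$ (with $(e_i)_j=0$ for $j\ne i$) we have $X_j(e_i)=0$ for $j\ne i$ since $e_i\in\Pi_j$, while $X_i(e_i)=\overline{X(e_i)}-\sum_{j\ne i}X_j(e_i)=\overline{(fR)(e_i)}\bigl(1-\overline{e_i}\bigr)-0=0$ because $\overline{e_i}=1$; hence $X(e_i)=0$, so each vertex is a fixed point.

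The argument is essentially routine; the one point deserving care is to separate invariance of the \emph{affine span} of $\mathcal T_I$, which follows at once from the two tangency facts, from invariance of the face itself, which additionally requires that \emph{every} coordinate hyperplane --- not only those with index in $I$ --- be invariant, so that the constraints $x_j\ge 0$ for $j\notin I$ are preserved. This is precisely the Nagumo subtangentiality condition for the convex set $\mathcal T_I$; one could instead invoke the general Nagumo invariance theorem, but the coordinate bookkeeping above is elementary and self-contained.
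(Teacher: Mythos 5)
Your proof is correct. Note, however, that the paper does not prove this proposition itself --- it is quoted from \cite{ricci} --- so there is no in-paper argument to compare against. That said, your argument is the natural and standard one and it is complete: tangency of $X$ to the affine hyperplane $\{\overline{x}=1\}$ follows from the linearity of $\overline{\,\cdot\,}$ and the defining form $X=(fR)-\overline{(fR)}\,x$; tangency of $X$ to each coordinate hyperplane $\Pi_i$ reduces to the hypothesized tangency of $fR$ since the correction term $\overline{(fR)}\,x_i$ vanishes on $\Pi_i$; and the passage from tangency of the ambient hyperplanes to invariance of the face itself is handled by the uniqueness argument that prevents solutions started with $x_j(0)>0$ from ever reaching $\Pi_j$. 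The observation you flag --- that one must use tangency to \emph{all} coordinate hyperplanes, not just those indexed by $I$, in order to preserve the inequality constraints --- is indeed the point that makes this a Nagumo-type invariance argument rather than just invariance of a linear subspace, and your elementary verification of it is sound. The vertex computation is also fine: all off-diagonal components vanish by the hyperplane argument, and the remaining component is forced to vanish by $\overline{X}(e_i)=\overline{(fR)(e_i)}(1-\overline{e_i})=0$.
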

	
	Instead of analyzing the dynamics of  $X$, it is more convenient to analyze its dynamics on the projected simplex
	\[
	{\mathcal S} = \{(x_1,\ldots,x_{n - 1}) \in \mathbb{R}_+^{n-1} :~ x_1 + \cdots + x_{n - 1} \leq 1\}
	\]
	associated to the conjugated vector field
	$
	Y = P \circ X \circ P^{-1},
	$
	where $P: {\mathcal T} \to {\mathcal S}$ is given by the projection
	$P(x_1,\ldots,x_{n - 1},x_n) = (x_1,\ldots,x_{n - 1})$.
	The flow of $Y$ in ${\mathcal S}$ is the so called {\em projected Ricci flow}. We conclude:
	
	\begin{proposicao}
		\label{prop:simplex-projection}
		If the vector field $fR$ is polynomial of degree $d$, then the vector fields
		$X$ given by equation \eqref{campo-X} and
		$Y = P \circ X \circ P^{-1}$
		are polynomial of degree $d+1$ and the associated flows are conjugated.  Furthermore, $x \in {\mathcal T}$ is Einstein if and only if $Y( Px ) = 0$.
	\end{proposicao}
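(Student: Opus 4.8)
The plan is to deduce everything from three facts: a degree count for \eqref{campo-X}; the observation that $X$ is tangent to the affine hyperplane $H=\{x\in\R^n:\overline x=1\}$ carrying the simplex, so that although $P$ is merely a projection of $\R^n$ it restricts to an affine diffeomorphism on $H$; and Proposition \ref{thm-rescaling}, which ties Einstein metrics to equilibria of \eqref{eq-R-projetada}. For the degrees, writing $X(x)=(fR)(x)-\overline{(fR)(x)}\,x$ as in \eqref{campo-X}, the hypothesis says $(fR)$ is polynomial of degree $d$; then the scalar $\overline{(fR)(x)}=\sum_i (fR)(x)_i$ is a polynomial of degree $d$, and multiplying it by the degree-one vector $x$ produces a polynomial vector field of degree $d+1$, so $X$ is polynomial of degree $d+1$. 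Since $P$ is linear and $P^{-1}(x_1,\ldots,x_{n-1})=(x_1,\ldots,x_{n-1},\,1-x_1-\cdots-x_{n-1})$ is affine, $Y=P\circ X\circ P^{-1}$ is again polynomial of degree $d+1$; note that, $P$ being linear, this literal composition already equals the pushforward $P_*X$.

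Next I would establish the conjugacy. Because $\overline x$ is linear, $\overline{X(x)}=\overline{(fR)(x)}\,(1-\overline x)$, which vanishes on $H$; hence $X(x)$ lies in the direction space of $H$ for every $x\in H$, i.e. $X$ is tangent to $H$, so its flow preserves $H$. As $P|_H:H\to\R^{n-1}$ is an affine isomorphism with inverse $P^{-1}$, and by definition $Y=(P|_H)_*(X|_H)$, the map $P|_H$ conjugates the flow of $X$ with that of $Y$; restricting to the simplices, $P:\mathrm{cl}(\mathcal T)\to\mathrm{cl}(\mathcal S)=P(\mathrm{cl}(\mathcal T))$ is a diffeomorphism conjugating the two flows. (In the setting where $fR$ is moreover tangent to the coordinate hyperplanes $\Pi_i$ — the situation actually of interest — Proposition \ref{propos-estratific} makes $\mathrm{cl}(\mathcal T)$, and hence $\mathrm{cl}(\mathcal S)$, flow-invariant, so these are genuine flows on the closed simplices.)

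For the Einstein statement I would argue as follows. By Section \ref{general-ricci-flow} — combining \eqref{eq-compon-ricci} with \eqref{eq-ricci-flow-coords} — the invariant metric determined by $x$ is Einstein precisely when $R(x)=\lambda x$ for some $\lambda\in\R$. For $x\in\mathcal T$ this is equivalent, by Proposition \ref{thm-rescaling}, to $x$ being an equilibrium of \eqref{eq-R-projetada}, i.e. to $R(x)-\overline{R(x)}\,x=0$, and since $f>0$ on $\mathcal T$ this is in turn equivalent to $X(x)=0$. Finally, for $x\in\mathcal T$ we have $P^{-1}(Px)=x$, so $Y(Px)=P(X(x))$: thus $X(x)=0\Rightarrow Y(Px)=0$, and conversely, if $Y(Px)=0$ then the first $n-1$ coordinates of $X(x)$ vanish while $x\in H$ forces $\overline{X(x)}=0$, so the remaining coordinate vanishes too and $X(x)=0$. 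Hence $x$ is Einstein $\iff Y(Px)=0$.

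I do not expect a genuine obstacle here; the argument is essentially bookkeeping. The two points that need a little care are precisely the places where the tangency $\overline{X}|_H=0$ is used: without it the projection $P$ is not injective, so there would be no conjugacy, and the reverse implication in the Einstein statement would fail to recover the dropped last coordinate of $X(x)$.
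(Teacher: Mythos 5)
Your proof is correct and follows the route the paper implicitly takes: the paper states this proposition with no written proof, offering it as a corollary of \eqref{campo-X}, the tangency discussion preceding Proposition \ref{propos-estratific}, and Proposition \ref{thm-rescaling}; your degree count, your observation that $\overline{X(x)}=\overline{(fR)(x)}(1-\overline{x})$ vanishes on the hyperplane $\overline{x}=1$ (so $P$ restricted to that hyperplane is an affine isomorphism conjugating the flows), and your reduction of the Einstein statement to Proposition \ref{thm-rescaling} together with the recovery of the dropped $n$-th coordinate from $\overline{X(x)}=0$ are exactly the intended bookkeeping.
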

	
	
	

	\subsection{Flag manifolds}\label{sec:flag}
	
	Here we recall some results and notations about compact Lie groups and its flag manifolds (for details and proofs see, for example, \cite{helgason}). Let $G$ be a compact connected Lie group equipped with  a $\Ad(G)$-invariant inner product $( \cdot, \, \cdot )$ in its Lie algebra $\g$. Fix a maximal torus $T\subseteq G$ with Lie algebra $\t$.  It is known that $\g$ is the compact real form of the complex reductive Lie algebra $\g_\C$, the complexification of $\g$.  The adjoint representation of the Cartan subalgebra $\h = \t_\C$ splits as the root space decomposition
	$
	\g_\C = \h \oplus \sum_{\alpha \in \Pi} \g_\alpha
	$, where
	$$
	\g_\alpha = \{ X \in \g_\C :\, \ad(H) X =  \alpha(H) X, \, \forall H \in \h  \},
	$$
	for a root system $\Pi \subset \h^*$.  
	Let $\Pi^+$ be a choice of positive roots corresponding to a choice of Weyl chamber $\t^+ \subseteq \t$. Then $\g$ splits as the orthogonal sum
	$$
	\g = \t \oplus \sum_{\alpha \in \Pi^+} \m_\alpha,
	$$
	where $\m_\alpha$ is the real root space
	$$
	\m_\alpha = \g \cap ( \g_\alpha \oplus \g_{-\alpha} ).
	$$
	
	An homogeneous space $G/K$ is called a  \textit{flag manifold} if $K$ is the centralizer of a (non-necessarily maximal) torus in $G$. One concludes that  $K$ is connected and w.l.o.g.\ we may assume that $T \subset K$.  Generally,  one can choose a subset of simple roots $\T$ and consider $K = G_\T$,  the centralizer of the abelian subalgebra
	$$
	\t_\Theta = \{ H \in \t:~ \alpha(H) = 0, \, \alpha \in \Theta \}.
	$$
	Roughly,  $\T$ furnishes the block structure of the isotropy $G_\T$, which characterizes the corresponding flag manifold. 
	Denote by $\lr \Theta $ the set of roots spanned by $\Theta$ and let $\langle \Theta \rangle^+ = \langle \Theta \rangle \cap \Pi^+$.
	Then the isotropy Lie algebra $\k = \g_\T$ splits as
	\begin{equation}
	\label{eq-isotropy-flag}
	\g_\T = \t \oplus \sum_{\alpha \in \langle \T \rangle^+} \m_\alpha,
	\end{equation}
	Finally, we denote the flag manifold
	\begin{equation}
		\label{eq-def-flag}
		\F_\T = G/G_\T.
	\end{equation}
	We fix the basepoint $b=G_{\T}$ and observe that the orthogonal complement of $\g_\T$ is 
	$$
	\m_\T \, = \sum_{\alpha \in \Pi^+ - \langle \T \rangle} \m_\alpha,
	$$
	which is naturally  $G_\T$-invariant.  It follows that $\F_\T$ with
	$$
	\g = \g_\T \oplus \m_\T
	$$
	is reductive and the isotropy representation of $\F_\T$ is equivalent to the adjoint representation of $G_\T$ in $\m$. 
	
	Since the center $Z$ of $G$ is contained in $T$, it follows that $Z$ is contained in $G_\T$. Taking the quotient of both $G$ and $G_\T$ by $Z$ in (\ref{eq-def-flag}), we obtain the same flag manifold. Note that $G/Z$ is isomorphic to the adjoint group of $\g$.  Thus, $\F_\T$ depends only on the Lie algebra $\g$ of $G$, which we can assume to be semisimple, and on the subset of simple roots $\T$.  By definition, the rank of the flag manifold $\F_\T$ coincides with $\dim \t_\Theta = \#( \Sigma - \Theta)$.
	
	\subsection{Isotropy decomposition}
	
	Since $G_\T$ is connected, its adjoint representation in $\m$ is completely determined by the adjoint representation of $\g_\T$ in $\m$.
	The latter was completely described by \cite{siebenthal} who showed that the real root space  of $\t_\T$ in $\m$ decomposes as a sum of non-equivalent irreducible $\g_\T$-representations.
	
	Denote the restriction of a root $\alpha \in \Pi$ to  $\t_\T$ by
	\begin{equation}
		\label{def-restricao}
		[\alpha] := \alpha|_{\t_\T}. 
	\end{equation}
	We say that two roots $\alpha, \, \beta$ are $\Theta$-equivalent if $[\alpha] = [\beta]$. It follows that
	$$
	\Pi^+_\T := [\Pi^+ - \langle \T \rangle]
	$$
	is exactly the set of real roots that appear in the adjoint representation of $\t_\T$ on $\m$, with corresponding real root spaces given by
	$$
	\m_{[\alpha]} = \sum_{ [\beta] = [\alpha] } \m_\beta.
	$$
	It follows that
	$$
	\m_\Theta =  \sum_{ [\alpha] \in \Pi^+_\T } \m_{[\alpha]}
	$$
	is a sum of non-equivalent irreducible $\g_\T$-representations.
	
	Note that, in general, the functionals $\Pi^+_\T$ of $\t_\T$ appearing in the above isotropy decomposition do not determine a root system: they are the so called {\em $T$-roots} of a flag manifold and have only being classified for flag manifolds with a small number of $\m_{[\alpha]}$-summands so far (see for example \cite{Arvanitoyeorgos-Chrysikos-Sakane-2013}).  
	Recall that the Dynkin mark of a simple root $\alpha\in\Sigma$ is the coefficient $\mrk(\alpha)$ of $\alpha$, in the expression of the highest root as a combination of simple roots.
    It follows  that the coefficient of $\alpha$ in any root is less then $\mrk (\alpha)$.
	Here we will use the following classification (see \cite{anastassiou-chrysikos2,kimura} for details and corresponding flag manifolds).
	
	\begin{table}[h!]
		\caption{\label{tab-upto3} Classification of $T$-roots up to three isotropy summands}
		\hspace{-1.8cm}
		\begin{tabular}{llllccccr}
			Roots && $\Theta$-classes && summands && rank && name \\ \hline \hline
			$\Sigma \setminus \Theta = \{\alpha\}$, $\mrk(\alpha)=1$ && $[\alpha]$ && 1 && 1 
			&& Type I
			\\ \hline 
			$\Sigma \setminus \Theta = \{\alpha\}$, $\mrk(\alpha)=2$ && $[\alpha], \, 2[\alpha]$ && 2 && 1
			&& Type I
			\\ \hline 
			$\Sigma \setminus \Theta = \{\alpha\}$, $\mrk(\alpha)=3$ && $[\alpha], \, 2[\alpha], 3[\alpha]$ && 3 && 1 
			&& Type I
			\\ \hline 
			$\Sigma \setminus \Theta = \{\alpha, \beta\}$, $\mrk(\alpha)=\mrk(\beta)=1$ && $[\alpha], \, [\beta], [\alpha+\beta]$ && 3 && 2 
			&& Type II
			\\ \hline 
		\end{tabular}
	\end{table}
	
	We say that the set of  $T$-roots $\Pi^+_\T$ is of type $A,~B,~C,~D,~E$ or $F$ if $\Pi^+_\T$ is isomorphic to the respective type of system  in the classification of irreducible root  systems.  For instance, all three summands flag manifolds of Type II above have $T$-roots of type $A$ (more precisely, of type $A_2$).
	
	From Section \ref{general-ricci-flow}, it follows that the spaces of $G$-invariant semi-definite metrics on $\F_\T$ are completely parametrized by the orthant
	$$
	\mathcal{M}_\T = \{ 
	(x_{[\alpha]})_{[\alpha]}\in \R^{\Pi^+_\T}:\, x_{[\alpha]} > 0, \, 
	[\alpha] \in \Pi^+_\T 
	\}
	$$
	and its closure
	$$
	\overline{\mathcal{M}_\T} = \{ 
	(x_{[\alpha]})_{[\alpha]}:\, x_{[\alpha]} \geq 0, \, 
	[\alpha] \in \Pi^+_\T 
	\} .
	$$
	
	Next we explore the most natural embedding of the flag manifold $\F_\T$ and the corresponding subset of $\mathcal{M}_\T$ thus realized.
	
	
	%
	

	\section{Metrics induced by the adjoint representation}

	A flag manifold $\F_\T = G/G_\Theta$ can be embedded as the adjoint orbit of  an element $H \in \t_\T$, as follows. Consider the centralizer $G_H$ in $G$ and the corresponding homogeneous space
	$$
	\F_H = G/G_H.
	$$
	Since $H \in \t_\T$, it follows that $G_\T \subseteq G_H$, giving a natural $G$-equivariant projection
	$$
	\F_\Theta \to \F_H
	$$
	which furnishes the collapse phenomena we will be interested in (Theorem \ref{ithm:convergence}).  
	We have that  $G_H$ is connected and with Lie algebra
	\begin{equation}
		\label{eq-u_H}
		\g_H = \t \oplus \sum_{\alpha \in \Pi^+,\, \alpha(H)=0} \m_\alpha.
	\end{equation}
	By \eqref{eq-isotropy-flag} it follows that $G_H = G_\T$ if, and only if, the set of roots that annihilate $H$ equals the set of roots spanned by $\T$, namely, if $H$ lies in
	$$
	\t_\Theta^\circ = \{ H \in \t_\Theta: \, \alpha(H) \neq 0, \quad \forall \alpha \in \Pi^+  - \langle \Theta \rangle \}.
	$$
	Note that $\t^\circ_\T$  is the complement of finite number of hyperplanes in $\t_\T$,  and thus open and dense in $\t_\Theta$.
	
	In conclusion, the flag manifold $\F_\T = G/G_\T$, with basepoint $b_\T = G_\T$, can be embedded in $\g$ as follows: let $H \in \t_\Theta^\circ$ and consider the embedding induced by the adjoint orbit: 
	\begin{equation}
		\label{eq-mergulho}
		f_{H}: 
		\F_\Theta \to \g, \qquad u b_\Theta \mapsto \Ad(u)H.
	\end{equation}
	Although this is a natural realization, we will see that the class of metrics realized by this embedding is very limited. 
	
	In order to compute the metric induced by the embedding, note that, for $X \in \g$ and $u\in G$, 
	\begin{equation}
		\label{eq-derivada-mergulho}
		f'_{H}(u b_\Theta) u(X \cdot b_\Theta) = \Ad(u)[X, H].
	\end{equation}
	Fix $( \cdot, \, \cdot )$ an $\Ad(G)$-invariant inner product in $\g$. 
	Note that $( \cdot, \, \cdot )$ is a multiple of the Cartan-Killing form in each simple component of $\g$.  Then the embedding above induces the $G$-invariant Riemannian metric $\lr{,}$ in $\F_\Theta$ given by
	$$
	\langle u( X \cdot b_\Theta),\, u (Y \cdot b_\Theta ) \rangle_{u b_\Theta} 
	=
	\left( f'_{H}(u b_\Theta) u(X \cdot b_\Theta), \, 
	f'_{H}(u b_\Theta) u(Y \cdot b_\Theta ) \right).
	$$
	By equation (\ref{eq-derivada-mergulho}) and $\Ad(G)$-invariance of $(\, ,)$,
	\begin{equation}
		\label{eq-metrica-inv}
		\langle u( X \cdot b_\Theta),\, u (Y \cdot b_\Theta ) \rangle_{u b_\Theta} 
		=            
		\left( [X, H], \,  [Y, H] \right).
	\end{equation}           
	
	On the other hand,  $\g = \g_\Theta \oplus \m_\Theta$, where
	\begin{eqnarray}
		\g_\Theta   =  \t \oplus \sum_{\alpha \in \langle \Theta \rangle^+} \g_\alpha,\qquad 
		\label{eq-esp-tg}
		\m_\Theta  =  \sum_{\alpha \in \Pi^+ - \langle \Theta \rangle} \g_\alpha.
	\end{eqnarray}
	Since $H \in \t$, we have that $\ad(H)$ is skew-symmetric with respect to $( \cdot, \, \cdot )$ and leaves $\g_\alpha$ invariant.  It follows that  $\ad(H)^2$ acts in $\g_\alpha$, hence in $\m_\alpha$, as multiplication by $-\alpha(H)^2$.
	In particular, for $X, Y \in \m_\Theta$, we write  $X = \sum_\alpha X_\alpha$,  $Y = \sum_\alpha Y_\alpha$, with $X_\alpha, Y_\alpha \in \m_\alpha$ and conclude that
	\begin{eqnarray}
		\langle X \cdot b_\Theta,\, Y \cdot b_\Theta \rangle_{b_\Theta} 
		& =  &          
		\left( \ad(H)X, \,  \ad(H)Y \right) \nonumber \\
		& =  &          
		- \left( \ad(H)^2 X, \,  Y \right) \nonumber \\
		& =  &          
		\sum_\alpha \alpha(H)^2 \left( X_\alpha, \,  Y_\alpha \right) .
		\label{eq-metrica-mergulho}
	\end{eqnarray}
	
	If  a general $H \in \t_\Theta$ is allowed, then the metric  can become semi-definite, thus defining the map:
	\begin{equation}
		\label{eq-metrica-mergulho-map}
		\mu^{(1)}: \t_\Theta \to \overline{\mathcal{M}_\T}
		\qquad
		H \mapsto (\alpha(H)^2)_{[\alpha]}.
	\end{equation}
	Note that  $[\alpha]=[\beta]$ guarantees that $\alpha(H) = \beta(H)$ for every $H \in \t_\T$.
	
	Equation \eqref{eq-metrica-mergulho-map} induces strong restrictions on  metrics realizable as adjoint orbits. For example, for the class of 3-summands flag manifolds of Type II we will show in Section \ref{sec:realizing} that the image of $\mu^{(1)}$ is the cone with vertex at the origin generated by the boundary of the disc in Figure \ref{JapanFlag}.
	One can naturally improve the situation by taking convex sums of these metrics, thus realizing also the interior of the cone. Interestingly enough, such a sum coincides with the orbit metric in a product of adjoint representations. 
	
	To this aim, consider the action of  $G$ on the $k$-fold product $\g^k$ which is  the adjoint action in each factor. Then the orbit of $(H_1, \ldots, H_k) \in \t_\Theta^k$ is the image of 
	\begin{equation}
		\label{eq-mergulho-produto}
		f_{(H_1, \ldots, H_k)}: 
		\F_\Theta \to \g^k, \qquad u b_\Theta \mapsto 
		(\Ad(u)H_1, \ldots, \Ad(u)H_k),
	\end{equation}
	and one readily sees that the  induced (semi-definite) metric is given by
	\begin{eqnarray}
		\langle X \cdot b_\Theta,\, Y \cdot b_\Theta \rangle_{b_\Theta} 
		& = & 
		\sum_\alpha (\alpha(H_1)^2 + \cdots + \alpha(H_k)^2) \left( X_\alpha, \,  Y_\alpha \right) .
		\label{eq-metrica-mergulho-produto}
	\end{eqnarray}
	Thus, we arrive at the more general map
	\begin{equation}
		\label{eq-metrica-mergulho-produto-map}
		\mu^{(k)}: \t_\Theta^k \to \overline{\mathcal{M}}
		\qquad
		(H_1, \ldots, H_k) \mapsto 
		(\alpha(H_1)^2 + \cdots + \alpha(H_k)^2)_{[\alpha]}.
	\end{equation}
	
	We further notice that the isotropy of $G_{(H_1,...,H_k)}$ is the intersection of the isotropies $G_{H_i}$. Therefore, 
	\[ G_\T \subseteq G_{(H_1,...,H_k)} \quad \Longleftrightarrow\quad  G_\T \subseteq G_{H_i},\quad i=1,...,k,\]
	thus justifying our special interest in $\t_\Theta^k$ instead of seemingly more general orbits in $\g^k$.
	Our next step is to study the properties of $\mu^{(k)}$ through its  geometry.

	\section{Metrics induced by products of the adjoint representation}
	
	Given $H\in\t_\T$, observe that 
	\[\mu^{(k)}(H,0,...,0)=\mu^{(k)}(0,H,0,...,0)=...=\mu^{(k)}(0,...,0,H).\] 
	On the other hand, the shifting $(H,0,...,0)\mapsto (0,H,0,...,0)$ is realizable by applying an orthogonal transformation to the lines of the matrix $(H|0|...|0)$, where $H$ and $0$ are seem as column vectors. Our first aim is to show how this invariance holds for every orthogonal transformation and use it to derive properties of the maps $\mu^{(k)}$.

	Denote the image of $\mu^{(k)}$ by $\ov{\mathcal{M}}^{(k)}$ and
	let $r = \dim \t_\T$ be the rank of  $\F_\T$. We now choose a basis for $\g$ in order to identify $\g^k$ with $r\times k$ matrices.
%
%
%
	Label the subset of simple roots outside of $\T$ by $\{\alpha_1,...,\alpha_r\} = \Sigma-\T$ and complete it with a choice of representatives $\{\alpha_1,...,\alpha_n\}\subseteq \Pi^+-\lr\T $, where $n\geq r$ is the number of isotropy summands.
	Denote $\m_i = \m_{[\alpha_i]}$, so that
	$$
	\m = \m_1 \oplus \cdots \oplus \m_n,
	$$
	is the decomposition of $\m$ into isotropy summands. 
	Note that the restriction of the linear functionals $\alpha_1,...,\alpha_r$ to $\t_\T$ form a basis of $\t_\T^*$.
	We endow $\t_\T$ with the inner product
	\begin{equation}
	\label{eq:basedual}
		\lr{H,H'} = \sum_{i,j=1}^r \alpha_i(H)\alpha_j(H'),
	\end{equation}
	and fix the orthonormal basis $\omega_1,...,\omega_r\in\t_\T$, dual to the restriction of  $\alpha_1,...,\alpha_r$ to $\t_\T$.
	From now on, we identify $\t^k_\T$ with the space of $r\times k$ matrices $\gl(r,k)$ by sending $(H_1,...,H_k)$  to the matrix $X$ whose $(i,j)$ entry is the $i$-th coordinate of $H_j$ in this fixed basis.
	
	One notices that $\mu^{(k)}$ becomes
	\begin{equation}
		X \mapsto 
		(x_1, \ldots, x_n) \in \R^n,
	\end{equation}
	where 
	\begin{equation}
		\label{metric-map-coordinates}
		x_i = \alpha_i(H_1)^2 + \cdots + \alpha_i(H_k)^2 \in \R^n.
	\end{equation}
	
	Let  
	\[\mathrm{P}(k)\subseteq \mathrm{PS}(k)\subseteq \mathrm{S}(k)\subseteq
	\gl(k) = \gl(k,k)\]
	denote the subset of positive, positive semi-definite, symmetric and square matrices, respectively.
	The main result of this section is Proposition \ref{prop:transposta} below.
	
	\begin{proposicao}
		\label{prop:transposta}
		The induced metric map $\mu^{(k)}(X) = (x_1,...,x_n)$ factors through the Gram map
		$$
		\gamma^{(k)}: \gl(r,k) \to \mathrm{PS}(r),
		\qquad
		X \mapsto XX^T	
		$$
		where $X^T$ is the transpose of $X$. Moreover, 
		each $x_i$ is a linear function on the entries of $XX^T$.
	\end{proposicao}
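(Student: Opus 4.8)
The plan is to unwind the definition of $\mu^{(k)}$ in the fixed coordinate system: I will express each coordinate $x_i$ directly in terms of the entries of the matrix $X$ and then recognize the resulting expression as a quadratic form in $X$ that sees only $XX^T$, in fact a linear form in the entries of $XX^T$. The first step is to compute $\alpha_i(H_j)$ in terms of $X$. By construction the $(l,j)$ entry $X_{lj}$ is the $l$-th coordinate of $H_j$ with respect to the basis $\omega_1,\dots,\omega_r$ of $\t_\T$ dual to the restrictions $\alpha_1|_{\t_\T},\dots,\alpha_r|_{\t_\T}$, so for $1\le i\le r$ one gets $\alpha_i(H_j)=X_{ij}$. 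Since $\alpha_1|_{\t_\T},\dots,\alpha_r|_{\t_\T}$ is a basis of $\t_\T^*$, for each representative $\alpha_i$ with $r<i\le n$ I can write $\alpha_i|_{\t_\T}=\sum_{l=1}^r c_{il}\,\alpha_l|_{\t_\T}$ and assemble the coefficients into a row vector $c_i=(c_{i1},\dots,c_{ir})$ (with $c_i$ the $i$-th coordinate vector when $i\le r$); then $\alpha_i(H_j)=\sum_l c_{il}X_{lj}=(c_iX)_j$, i.e.\ the row vector $(\alpha_i(H_1),\dots,\alpha_i(H_k))$ equals $c_iX$.

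Substituting into \eqref{metric-map-coordinates} then gives
\[
x_i \;=\; \sum_{j=1}^k \alpha_i(H_j)^2 \;=\; \sum_{j=1}^k (c_iX)_j^2 \;=\; (c_iX)(c_iX)^T \;=\; c_i\,(XX^T)\,c_i^T \;=\; \sum_{l,l'=1}^r c_{il}c_{il'}\,(XX^T)_{ll'}.
\]
Both assertions are now immediate. The right-hand side involves $X$ only through $XX^T=\gamma^{(k)}(X)$, so $\mu^{(k)}$ factors as $\mu^{(k)}=\nu\circ\gamma^{(k)}$, where $\nu\colon\mathrm{S}(r)\to\R^n$ is the linear map $\nu(G)=\big(c_1Gc_1^T,\dots,c_nGc_n^T\big)$ (restricted, if one likes, to $\mathrm{PS}(r)$); and each $x_i=\sum_{l,l'}c_{il}c_{il'}\,G_{ll'}$ is visibly a linear function of the entries of $G=XX^T$.

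There is no genuine obstacle here — the content is entirely the translation of $\mu^{(k)}$ into the chosen basis. The only point deserving a moment's care is the bookkeeping for the summands indexed by $r<i\le n$: one must use that the restrictions $\alpha_1|_{\t_\T},\dots,\alpha_r|_{\t_\T}$ span $\t_\T^*$, so that the constants $c_{il}$ exist, and recall (as already noted after \eqref{eq-metrica-mergulho-map}) that $[\alpha]=[\beta]$ forces $\alpha=\beta$ on $\t_\T$; hence the value is independent of which representative $\alpha_i$ of the $i$-th $T$-root is used, and the formula $x_i=c_i(XX^T)c_i^T$ is well defined.
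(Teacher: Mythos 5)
Your proof is correct and follows essentially the same route as the paper's: you express each $\alpha_i(H_j)$ in the coordinates coming from the basis dual to $\alpha_1|_{\t_\T},\dots,\alpha_r|_{\t_\T}$, square and sum over $j$, and recognize the resulting bilinear expression $c_i(XX^T)c_i^T$ as a linear function of the entries of $XX^T$. The only cosmetic difference is that you package the coefficients $a_{1s},\dots,a_{rs}$ into row vectors $c_i$ and write the computation in matrix form, while the paper writes the analogous expressions $x_{r+s}=\sum_{ij}a_{is}a_{js}\langle l_i,l_j\rangle$ in terms of the rows $l_i$ of $X$.
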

	
		Note that if we denote by $l_1,...,l_r$ the lines of $X$, as vectors in $\R^k$, then $\gamma^{(k)}(X)=XX^T$ is the \textit{Gram} matrix of $\{l_1,...,l_r\}$. That is, the $(i,j)$-th entry of $XX^T$ is $\lr{l_i,l_j}$.
	
		\begin{proof41}
		Write $H_i=(h_{1,i},...,h_{r,i})$. The lines of $X=(H_1,...,H_k)$ are the vectors $l_i=(h_{i,1},...,h_{i,k})$. 
		Since each root is a linear combination of simple roots and  roots in $\T$ vanish when restricted to $\t_\T$, it follows that there are positive integers $a_{1s},...,a_{rs}$ such that
		$$
		[\alpha_{r+s}] = 
		a_{1s} [\alpha_1] +...+a_{rs} [\alpha_r]
		$$
		for every $s \leq n - r$.
		We conclude that
		\begin{align}
			\label{structure_constants}\alpha_s(H_i) &= h_{s,i}, \qquad & \text{ for } s\leq r,\\\nonumber
			\alpha_{r+s}(H_i) &= a_{1s} h_{1,i} + \cdots + a_{rs} h_{r,i},\qquad &\text{ for } 0< s\leq n- r .
		\end{align}
		Therefore,
		\begin{eqnarray}
			x_1  &=& |l_1|^2, \qquad  \cdots \qquad x_s  =|l_s|^2 = \lr{l_s,l_s}, \qquad  \label{mus}\\
			& \vdots& \nonumber\\
			x_r  &=& |l_r|^2\nonumber\\
			x_{r+1} & = &   |a_{11}l_1+...+a_{r1}l_r|^2 \nonumber\\
			&\vdots& \nonumber\\
			x_{r+s} & =&    |a_{1s}l_1+...+a_{rs}l_r|^2 = \sum_{ij} a_{is}a_{js} \lr{l_i,l_j}
			\label{murs}\\
			&\vdots& \nonumber\\
			x_n & = &   |a_{1n}l_1+...+a_{rn}l_r|^2 \nonumber
		\end{eqnarray}
		where $|\cdot|$ stands for the standard euclidean norm in $\R^k$.
		The proposition follows since $x_i$ is a linear function on $\lr{l_i,l_j}=\sum_q h_{i,q}h_{j,q}$, the  entries of the matrix $XX^T$.
	\end{proof41}
	
If follows that $\mu^{(k)}$ factors through $\gamma^{(k)}$ and a linear map $\mu$: 
	$$
	\begin{tikzcd}
		\gl(r,k) \arrow[swap]{dr}{\gamma^{(k)}} \arrow{rr}{\mu^{(k)}} & & \overline{\mathcal{M}} \\
		& \mathrm{PS}(r) \arrow[swap]{ur}{\mu}.
	\end{tikzcd}
	$$
To study $\gamma^{(k)}$ we now focus our attention on the Gram map $\gamma = \gamma^{(r)}$
		$$
		\gamma: \gl(r) \to \mathrm{PS}(r),
		\qquad
		X \mapsto XX^T	
		$$
Given $k \leq r$, we consider $\gl(r,k)$ included in $\gl(r)$ as the leftmost $k$ columns, with zero elsewhere, so that $$\gamma^{(k)} = \gamma|_{\gl(r,k)}$$
	
	Note that ${\mathrm Gl}(r)$ acts on $\gl(r)$ both by right and left multiplication, and both actions preserve rank.  Given $V\subseteq \gl(r)$, denote the subset of rank $k$ matrices in $V$ as $V_k$.
	
	\begin{lema}
		\label{lem-gram} Let $X\in \gl(r)$.
		\begin{enumerate}[(i)]
			\item If $u \in \mathrm O(r)$, then 
			$\gamma(Xu) = \gamma(X)$ and $\gamma(uX) = u\gamma(X)u^{-1}
			$;
			
			\item If $\gamma(X)=\gamma(Y)$, $Y \in \gl(r)$, then $X = Y u$ for some $u \in {\mathrm O}(r)$. If $X,\, Y \in {\mathrm Gl}(r)$, then this $u$ is unique; 
			
			\item The symmetric square root $\sigma: \mathrm{PS}(r) \to \mathrm{S}(r)$, $Y \mapsto \sqrt{Y}$, is a continuous section of $\gamma: \gl(r) \to \mathrm{PS}(r)$;
			
			
			\item If $X \in \gl(r)_k$, then $Xu \in \gl(r,k)$ for some $u \in {\mathrm O}(r)$;
			
			\item 
			$\gamma( \gl(r)_k ) = \gamma( \gl(r,k)_k ) = \mathrm{PS}(r)_k$;
			
			\item For $0<k<r$, there does not exist a continuous section of the surjective restriction $\gamma^{(k)}: \gl(r,k)_k \to \mathrm{PS}(r)_k$;
			
			\item $\mathrm{PS}(r)$ is the convex closure of $\gamma( \gl(r,1)_1 )$.
		\end{enumerate}
	\end{lema}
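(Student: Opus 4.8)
The plan is to obtain (i)--(v) and (vii) by elementary linear algebra together with continuity of the positive semi-definite square root, and to isolate (vi), which is the only genuinely topological statement and the main obstacle. Item (i) is a one-line computation: since $u^{T}u=uu^{T}=\mathrm{Id}$ we get $\gamma(Xu)=Xuu^{T}X^{T}=XX^{T}$ and $\gamma(uX)=uXX^{T}u^{T}=u\gamma(X)u^{-1}$. For (iii), every $A\in\mathrm{PS}(r)$ has a unique square root $\sqrt{A}\in\mathrm{PS}(r)\subseteq\mathrm{S}(r)$ (apply $t\mapsto\sqrt{t}$ to the eigenvalues), and $\gamma(\sqrt{A})=(\sqrt{A})(\sqrt{A})^{T}=(\sqrt{A})^{2}=A$, so $\sigma$ is a section; for continuity I would use the standard compactness argument: if $A_{m}\to A$ then $\|\sqrt{A_{m}}\|$ stays bounded, any subsequential limit $B$ of $\sqrt{A_{m}}$ is positive semi-definite with $B^{2}=A$, hence $B=\sqrt{A}$ by uniqueness, so $\sqrt{A_{m}}\to\sqrt{A}$.

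For (ii), write $l_{i},m_{i}$ for the rows of $X,Y$; then $XX^{T}=YY^{T}$ says $\langle l_{i},l_{j}\rangle=\langle m_{i},m_{j}\rangle$ for all $i,j$, so $\sum c_{i}m_{i}\mapsto\sum c_{i}l_{i}$ is a well-defined linear isometry from $\operatorname{span}\{m_{i}\}$ onto $\operatorname{span}\{l_{i}\}$, and any extension of it to an element $u\in\mathrm{O}(r)$ (possible since the two spans have equal dimension) satisfies $Yu=X$; when $X,Y\in\mathrm{Gl}(r)$ necessarily $u=Y^{-1}X$, so it is unique. (Alternatively, compare the left polar decompositions $X=\sqrt{XX^{T}}\,U_{X}$ and $Y=\sqrt{YY^{T}}\,U_{Y}$.) For (iv), if $\rank X=k$ then $\dim\ker X=r-k$; taking an orthonormal basis of $\R^{r}$ whose last $r-k$ vectors span $\ker X$ and letting $u\in\mathrm{O}(r)$ be the change-of-basis matrix, the last $r-k$ columns of $Xu$ vanish, so $Xu\in\gl(r,k)$, and $\rank(Xu)=k$. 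Then (v) is bookkeeping: $\rank(XX^{T})=\rank X$ gives $\gamma(\gl(r)_{k})\subseteq\mathrm{PS}(r)_{k}$; for $A\in\mathrm{PS}(r)_{k}$ we have $\sqrt{A}\in\mathrm{S}(r)_{k}$ with $\gamma(\sqrt{A})=A$, so $\mathrm{PS}(r)_{k}\subseteq\gamma(\mathrm{S}(r)_{k})\subseteq\gamma(\gl(r)_{k})$; and by (iv) and (i) every value of $\gamma$ on $\gl(r)_{k}$ is already attained on $\gl(r,k)_{k}$, so all three sets coincide.

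The crux is (vi). Sending a matrix to the span of its columns exhibits $\gamma^{(k)}\colon\gl(r,k)_{k}\to\mathrm{PS}(r)_{k}$, up to homotopy, as the frame bundle of the tautological $k$-plane bundle $\tau$ over the Grassmannian $\mathrm{Gr}(k,r)$: the space $\gl(r,k)_{k}$ is the open manifold of $k$-tuples of linearly independent vectors in $\R^{r}$ (Gram--Schmidt deformation retracts it onto the Stiefel manifold $V_{k}(\R^{r})$), the composite $\gl(r,k)_{k}\xrightarrow{\,\gamma^{(k)}\,}\mathrm{PS}(r)_{k}\xrightarrow{\,A\mapsto\operatorname{Im}A\,}\mathrm{Gr}(k,r)$ equals ``span of columns'' because $\operatorname{Im}(XX^{T})=\operatorname{Im}X$, and the continuous map $A\mapsto\operatorname{Im}A$ has the section $V\mapsto P_{V}$, orthogonal projection onto $V$ (symmetric, positive semi-definite of rank $k$, with image $V$). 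Hence a continuous section $s$ of $\gamma^{(k)}$ would, via $V\mapsto s(P_{V})$, give a continuous section of $\gl(r,k)_{k}\to\mathrm{Gr}(k,r)$, that is, a continuous choice of basis of each $k$-plane, i.e.\ a trivialization of $\tau$. But $\tau$ is nontrivial for $0<k<r$; in fact it is non-orientable, restricting to a trivial bundle plus a M\"obius band over a suitable circle $\mathbb{RP}^{1}\subseteq\mathrm{Gr}(k,r)$, so $w_{1}(\tau)\neq0$. (For $k=1$ this is simply the nonexistence of a continuous section of the connected double cover $S^{r-1}\to\mathbb{RP}^{r-1}$.) I expect making this bundle identification precise --- rather than the Stiefel--Whitney class input, which is standard --- to be the delicate step.

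For (vii), by (v) with $k=1$ we have $\gamma(\gl(r,1)_{1})=\mathrm{PS}(r)_{1}=\{vv^{T}:v\neq0\}$. Given $A\in\mathrm{PS}(r)$, the spectral theorem gives $A=\sum_{i=1}^{r}\mu_{i}v_{i}v_{i}^{T}$ with $\mu_{i}\geq0$ and $\{v_{i}\}$ orthonormal, hence $A=\frac{1}{r}\sum_{i=1}^{r}(\sqrt{r\mu_{i}}\,v_{i})(\sqrt{r\mu_{i}}\,v_{i})^{T}$ is a convex combination of rank-at-most-one positive semi-definite matrices; replacing any vanishing $\sqrt{r\mu_{i}}\,v_{i}$ by $\varepsilon v_{i}$ and letting $\varepsilon\to0$ puts $A$ (in particular $A=0$) in the closed convex hull of $\gamma(\gl(r,1)_{1})$. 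Conversely $\mathrm{PS}(r)$ is closed and convex and contains $\gamma(\gl(r,1)_{1})$, which gives the reverse inclusion; therefore $\mathrm{PS}(r)$ is the convex closure of $\gamma(\gl(r,1)_{1})$.
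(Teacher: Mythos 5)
Your proposal is correct and follows essentially the same strategy as the paper: items (i)--(v) by the same elementary linear algebra (polar/QR-type decompositions, rank preservation), item (vii) via the spectral extreme-ray description of $\mathrm{PS}(r)$, and item (vi) by reducing a hypothetical section of $\gamma^{(k)}$ to a trivialization of the Stiefel-over-Grassmann bundle (equivalently the tautological bundle), which is nontrivial for $0<k<r$. The only minor differences are cosmetic: you supply self-contained arguments for (iii) and (vii) where the paper cites Bouldin and Shaked-Monderer--Berman, and in (vi) you bypass the paper's computation of $(\gamma^{(k)})^{-1}(\mathrm{Gr}_k(r))$ by precomposing the section with $V\mapsto P_V$, reaching the same topological obstruction.
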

	
	\begin{proof}
		Item $(i)$ is immediate from the definition, since $u^T = u^{-1}$ for $u \in \O(r)$.
		Item $(ii)$ follows from the polar decomposition $X = Pu$, where $P$ is  positive semi-definite and $u \in {\mathrm O}(r)$.
		For a proof of item $(iii)$, we refer to \cite{bouldin}.
		

		For item $(iv)$, we first observe that, by right multiplying by a permutation matrix, which lies in ${\mathrm O}(r)$, we can suppose w.l.o.g.\ that the first $k$ columns $v_1, \ldots v_k$ of $X$ are linearly idenpendent.  The remaining $r-k$ columuns are then given by
		$a_{11} v_1 + \ldots + a_{k 1} v_k$, $\ldots,$ 
		$a_{1,n-k} v_1 + \ldots + a_{k,n-k} v_k$.
		Then $A = (a_{ij})$ is a rectangular $k \times (r-k)$ matrix, and we consider the upper triangular invertible matrix $g$ with two identity diagonal blocks of sizes $k \times k$ and $(r-k) \times (r-k)$, with $A$ above them and zero below. Then
		$X = Y g$, where $Y \in \gl(r,k)$ with first $k$ columns given by $v_1, \ldots v_k$ and remaining $r-k$ columuns given by zero.
		By applying the Gram-Schimdt process (seen as a QR decomposition), we get $g$ such that $g^T = u V$, with $u \in {\mathrm O}(r)$ and $V$ upper triangular.  Thus $g = V^T u^{-1}$, with $V^T$ lower triangular. Moreover,
		$$
		X u = Y V^T \in \gl(r,k)
		$$
		since, for $i > k$, $v^T$ maps $e_i$ to the subspace spanned by $e_i, e_{i+1}, \ldots, e_r$, which is annihilated by $Y$.
		
		Item $(v)$ follows by first observing that  items $(i)$ and $(iv)$ together imply that $\gamma( \gl(r)_k ) = \gamma( \gl(r,k)_k )$.  To show the equality of the last with $\mathrm{PS}(r)_k$, it is sufficient to note that  both $\gamma$ and $\sigma$ preserve rank. This is clear for $\sigma$ since the rank is the number of nonzero eigenvalues. For $\gamma$, this follows from 
		$\dim \mathrm{Im}(XX^T) = \dim \mathrm{Im}(X)$, which holds since $\mathrm{Im}(XX^T) \subseteq  \mathrm{Im}(X)$ and since $XX^T$ contains as a rank $k$ submatrix the Gram matrix of $k$ l.i.\ lines of $X$.
		
		For item $(vi)$, let $H$ be the matrix with one identity diagonal block of size $k \times k$, one zero diagonal block of size $(r-k) \times (r-k)$, and zero elsewhere.  Note that $H \neq 0$ is not the identity matrix, since $0 < k < r$, furthermore $H = H^T = H^2$, so that $H$ is a projection matrix. 
		Note that the orbit $\O(r)H \subset \gl(r,k)_k$ is given by the first $k$ orthogonal columns of $\O(r)$, and zero elsewhere, thus
		$$
		\O(r)H = \mathrm{St}_k(r)
		$$
		the Stiefel manifold of orthogonal $k$-frames of $\R^r$. Denote conjugation by $\Ad(g)H = g H g^{-1}$. Since $\gamma(H) = H$, by item (i), $\gamma^{(k)}$ restricted to $\mathrm{St}_k(r)$ is the standard projection to
		$$
		\Ad(\O(r)) H = \mathrm{Gr}_k(r)
		$$
		the projection representation of the Grassman manifold of $k$-dimensional subspaces of $\R^r$.
		Furthermore, we have that $(\gamma^{(k)})^{-1}(H)$ are the matrices of $\gl(r,k)$ whose first $k$ lines are orthonormal, and the remaining lines are orthogonal to them.  Since such lines belong to $\R^k$, it follows that 
		$$
		(\gamma^{(k)})^{-1}(H) = \O(k)H
		$$
		where $\O(k)$ is the subgroup of $\O(r)$ with one orthogonal block of size $k \times k$, one identity diagonal block of size $(r-k) \times (r-k)$, and zero elsewhere.
		Now let $X \in \gl(r,k)$ be such that $\gamma^{(k)}(X) = uHu^{-1}$ for some $u \in \O(r)$. Then $\gamma^{(k)}(u^{-1} X) = H$, so that $u^{-1} X \in  \mathrm{St}_k(r)$, and then $X \in \mathrm{St}_k(r)$.  It follows that
		$$
		(\gamma^{(k)})^{-1}(\mathrm{Gr}_k(r)) = \mathrm{St}_k(r)
		$$
		Thus, a continuous section of $\gamma^{(k)}$ would restrict to a continuous section of the principal $\O(r)$-bundle $\mathrm{St}_k(r) \to \mathrm{Gr}_k(r)$ which would render it a trivial bundle. A contradiction, since it is well known (for example, by computing their fundamental group) that this bundle is not trivial, for $0<k<r$.
		
		
		For item (vii), it is well known that $\mathrm{PS}(r)$ is the convex closure of its rank $1$ matrices $\mathrm{PS}(r)_1$ (see, for example, Proposition 1.21 of \cite{cone}). The result then follows from item (v), since $\mathrm{PS}(r)_1 = \gamma( \gl(r,1) )$.
	\end{proof}
	
	Note that $\mathrm{P}(r) = \mathrm{PS}(r)_r$ and that the boundary of $\mathrm{P}(r)$ in $\mathrm{S}(r)$ is the union of the $\mathrm{PS}(r)_k$, $k< r$.  Items $(ii)$ and $(iii)$ imply the well known fact that the restriction $\gamma: \mathrm{Gl}(r) \to \mathrm{P}(r)$ is a trivial principal ${\mathrm O}(r)$-bundle.
	Items $(iv)$ and $(v)$ suggests that we could use the right ${\mathrm O}(r)$-action to gauge a section of $\gamma$ to get a section of $\gamma^{(k)}$, nevertheless item $(vi)$ shows that this cannot be done continuously.
	This last assertion will be used in the next sections.

Theorem \ref{ithm:convex} now follows by putting together the linearity of $\mu$ (Proposition \ref{prop:transposta}) and the geometry of $\gamma$ (Lemma \ref{lem-gram}).
	



\begin{proof12}
Item $(vii)$ of Lemma \ref{lem-gram} guarantees that $\rm{PS}(r)$ is the convex closure of $\gamma^{(1)}(\gl(r,1))$. Since $\mu$ is linear, one concludes that $\ov{\mathcal M}^{(r)}=\rm{conv}\ov{\mathcal M}^{(1)}$.

	Finally, if the $T$-roots $\F_\T$ are of Type A then $\mu$ is a linear isomorphism. Indeed, the number of summands is $n=r(r+1)/2$, which coincides with the dimension of the space of symmetric $r \times r$ matrices. 
	Furthermore, the representatives of $\Pi_\T$ besides $[\alpha_1], \ldots, [\alpha_r]$ can be relabeled as $[\alpha_{pq}]$ such that
	$$
	[\alpha_{pq}] = [\alpha_p] + [\alpha_{p+1}] + \cdots + [\alpha_{q-1}] + [\alpha_{q}]
	\quad(1\leq p < q \leq k)
	$$
	so that
	$$
	\alpha_p(H_i) = h_{p,i},
	\qquad\qquad
	\alpha_{pq}(H_i) = h_{p,i} + \cdots + h_{q,i},
	$$
	with corresponding functionals
	\begin{equation}
		\label{mudef}
		x_{p}(Y) = y_{pp}, \qquad\qquad
		x_{pq}(Y) = \sum_{p\leq i,j \leq q} y_{ij}.
	\end{equation}
	Since $x_{pq}(Y)$ consists of the sum of $ y_{pq}$ with a linear combination of $y_{ij}$,   $p\leq i,j \leq q$, $(i,j)\neq(p,q)$, it follows that the set of functionals $\{x_p,x_{pq}\}$ are linearly independent. Hence, $\overline{\mathcal{M}}^{(r)}$ has non-empty interior in $\overline{\mathcal{M}}$, as desired.
		\end{proof12}

	\vspace{12pt}
	
	If follows that one does not gain new metrics by increasing the number of copies in the product of the adjoint representation. That is, the best we can get  is by taking $r$ products, where $r$ is the rank of the flag manifold. 
	
	Theorem \ref{ithm:convergence} shows that $\ov{\mathcal M}^{(r)}$ do not realize every metric in $\F_\T$, even in the type $A$ case. In this case, however, since the interior of $\ov{\mathcal M}^{(r)}$ is non-empty, one concludes that every invariant metric can be realized by a (non-convex) linear combination of metrics in $\ov{\mathcal M}^{(r)}$. Therefore, every metric can be realized as an orbit in the non-Euclidean representation $(\g^r,(\cdot,\cdot)){\times} (\g^r,-(\cdot,\cdot))$ given by the adjoint in each of the $2r$ coordinates. We do not extent any longer in this direction, since we are mostly interested in collapsing phenomena, and the non-Euclidean context conceal such a phenomena with manifolds with degenerated norms.

	\section{Realizing the projected Ricci flow
	\label{sec:realizing}
	}
	
	The image of the map $\mu^{(1)}: \t_\Theta \to \overline{\mathcal{M}}$ of metrics induced by an adjoint orbit 
	is a cone whose dimension is the rank $r$ of the flag manifold $\F_\T$, embedded into an eucliden space whose dimension is the number of its isotropy summands. For flag manifolds with two or three summands, it follows then from Table \ref{tab-upto3} that the set of invariant metrics that can be realized as an adjoint orbit is thin on the simplex $\mathcal{T}$: $x+y+z=1$, the phase space of the projected Ricci flow.
	
	This can be improved by taking products and the best we can get is the map $\mu^{(r)}: \t_\Theta^r \to \overline{\mathcal{M}}$,  
	where
	$$
	\dim \t_\Theta^r = \overbrace{r + \cdots + r}^r = r^2
	$$
	Thus, this improves the situation only for the case of Type II, which has three summands and is the only one with rank $r=2 > 1$.
	
	From now on we assume that $\F_\T$ is of Type II, which consists of two infinite families of types $A$ and $D$ and a flag manifold of type $E$ (see Table \ref{tab-typeII}), note that these correspond to simply laced Lie algebras. Note futhermore that their $T$-roots are of Type A (see Table \ref{tab-upto3}). 
	
	\begin{table}[h!]
		\caption{ \label{tab-typeII} Type II flag manifolds: type $A$, $D$, $E$ (see \cite{kimura})}
		\hspace{-1cm}
		\begin{tabular}{lllllll}
			Flag Manifold   & & $d_1$ & & $d_2$ & & $d_3$ \\ \hline \hline
			$SU(m+n+p)/S(U(m)\times U(n) \times U(p))$ && $2mn$ &&$2mp$ && $2np$ \\ \hline
			$SO(2\ell)/U(1)\times U(\ell-1)$, $\ell \geq 4$ && $2(\ell-1)$ && $2(\ell -1)$ && $(\ell-1)(\ell-2)$ \\ \hline
			$E_6/SO(8)\times U(1)\times U(1)$ && 16 && 16 && 16
		\end{tabular}
	\end{table}
	
	By Theorem \ref{ithm:convex}, the image $\overline{\mathcal{M}}^{(2)}$ of $\mu^{(2)}$
	has non-empty interior in $\overline{\mathcal{M}}$, we now characterize this image.
	First, we characterize the images of $\gamma^{(2)}$ and $\gamma^{(1)}$.
	The image of $\gamma^{(2)}$ is $\mathrm{PS}(2)$, where $Y = \begin{smallpmatrix} x & z \\ z & y \end{smallpmatrix} \in \mathrm{S}(2)$ belongs to $\mathrm{PS}(2)$ iff
	$\det Y = xy - z^2 \geq 0$ and $\mathrm{tr\,} Y = x+y \geq 0$. Thus, $\mathrm{PS}(2)$ is the intersection of the half-space $x+y \geq 0$ with the convex closure of the cone surface
	\begin{eqnarray}
		\label{eq-cone0}
		z^2 = xy
	\end{eqnarray}
	whose quadratic form has matrix $Q = \begin{smallpmatrix}
		\,\,\,0 & -1/2 & \,\,\,0 \\ 
		\,\,\,0 & \,\,\,0 & \,\,\,0 \\ 
		-1/2 & \,\,\,0 & \,\,\,1 \\ 
	\end{smallpmatrix}$.
	The boundary of this convex cone is made up of the rank 1 positive-semidefinite matrices, hence it is the image of $\gamma^{(1)}$.
	
	Since the $T$-roots of Type II are characterized by three roots $\alpha_1,\alpha_2,\alpha_3$ satisfying 
	\[
	[\alpha_1]+[\alpha_2]=[\alpha_3]
	\]
	It follows from \eqref{mus} and \eqref{murs} that
	$$
	x_1 = |l_1|^2 \qquad x_2 = |l_2|^2 \qquad x_3 = |l_1 + l_2|^2 
	= |l_1|^2 + |l_2|^2 + 2\lr{l_1,l_2}
	$$
	so that 
	$
	\mu\begin{smallpmatrix} x & z \\ z & y \end{smallpmatrix}
	= (x,y,x+y+2z)
	$,
	whose matrix is $M = 
	\begin{smallpmatrix}
		1 & 0 & 0 \\ 
		0 & 1 & 0 \\ 
		1 & 1 & 2 \\ 
	\end{smallpmatrix}$, which can also be obtained by \eqref{mudef}.
	It sends the cone surface \eqref{eq-cone0} to its pullback by $\mu^{-1}$, given by the cone surface in $\R^3$
	\begin{eqnarray}
		\label{eq-cone1}
		x^2 + y^2 + z^2 = 2(xy + xz + yz)
	\end{eqnarray}
	whose quadratic form has matrix $M^{-T} Q M^{-1} = \begin{smallpmatrix}
		\,\,\,\,\,1 & -1 & -1 \\ 
		-1 & \,\,\,\,\,1 & -1 \\ 
		-1 & -1 & \,\,\,\,\,1 \\ 
	\end{smallpmatrix}$.
	Its negative eigenspace is normal to the simplex $\mathcal{T}$ and positive eigenspace is spanned by this simplex, thus, it corresponds to a right circular cone surface. Denote by $\mathcal{C}$ the circle given by its intersection with the simplex.
	\begin{center}
		\includegraphics[width=7cm]{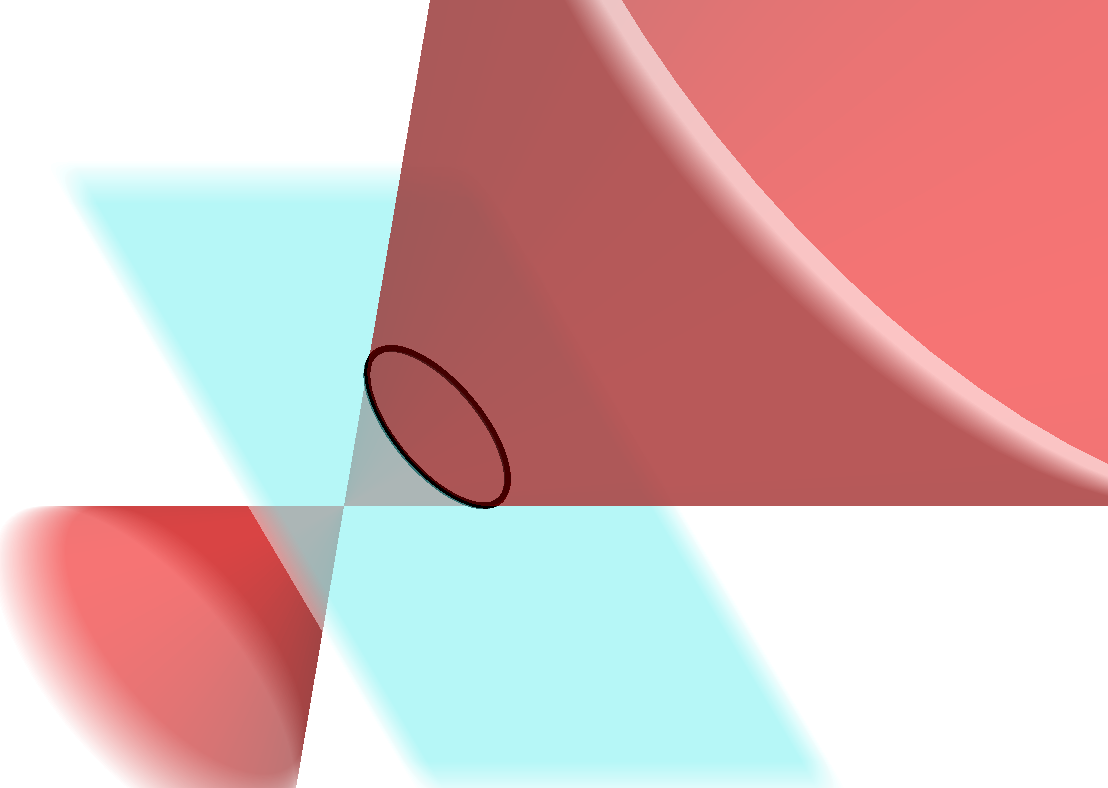}
	\end{center}
	It follows from the previous paragraph that the image of $\mu^{(1)} = \mu \circ \gamma^{(1)}$ is the cone surface \eqref{eq-cone1} on the first orthant and the image of $\mu^{(2)} = \mu \circ \gamma^{(2)}$ is the convex closure of this cone surface. This gives the first part of the following.
	
	\begin{proposicao}
		The semi-definite metrics $\overline{\mathcal{M}}^{(2)}$ realized by products of adjoint orbits is the convex closure of the right circular cone generated by $\mathcal{C}$ with vertex at the origin.
		
		Furthermore, $\overline{\mathcal{M}}^{(2)}$ is positively invariant by the unprojected Ricci flow.
	\end{proposicao}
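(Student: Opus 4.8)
The plan is as follows. First, the description of $\overline{\mathcal M}^{(2)}$ has essentially been obtained already: the paragraph preceding the statement identifies the image of $\mu^{(1)}=\mu\circ\gamma^{(1)}$ with the branch of the cone surface \eqref{eq-cone1} in the first orthant, and since $\mu$ is a linear isomorphism under the Type II hypothesis (cf.\ Theorem \ref{ithm:convex}), item $(vii)$ of Lemma \ref{lem-gram} gives $\overline{\mathcal M}^{(2)}=\mu(\mathrm{PS}(2))$ as exactly its convex closure, that is, the solid right circular cone
\[
\overline{\mathcal M}^{(2)}=\{\,x\in\R^3_{\ge 0}:\ q(x)\le 0\,\},\qquad
q(x,y,z)=x^2+y^2+z^2-2(xy+xz+yz),
\]
the sign being fixed by $q(1,1,1)=-3<0$ at the center of $\mathcal T$. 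It remains to prove positive invariance.

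For that, I would argue as follows. Along an integral curve of the Ricci flow, $x_k(t)=x_k(0)\exp\!\bigl(-2\int_0^t r_k\,ds\bigr)$ stays positive on its whole interval of existence, so a trajectory starting in $\overline{\mathcal M}^{(2)}\cap\R^3_{>0}$ can only leave this set by crossing the smooth hypersurface $\{q=0\}\cap\R^3_{>0}$, and it can do so only at a point where $\langle\nabla q,R\rangle\ge 0$. Hence positive invariance follows once we show
\[
\langle\nabla q(x),\,R(x)\rangle\;\le\;0\qquad\text{for every }x\in\R^3_{>0}\text{ with }q(x)=0 .
\]
By homogeneity of $R$ this is also equivalent to positive invariance of the inscribed disc bounded by $\mathcal C$ under the projected Ricci flow, which is the picture in Figure \ref{JapanFlag}.

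To prove the displayed inequality I would substitute the explicit Ricci components of a Type II flag manifold computed in \cite{ricci}: up to permutation there is a single nonzero structure constant $c=[123]>0$, and with $d_1,d_2,d_3$ as in Table \ref{tab-typeII} the components take the form
\[
r_i=\frac{1}{2x_i}+\frac{c}{2d_i}\,\frac{x_i}{x_jx_k}-\frac{c}{2d_i}\Bigl(\frac{x_j}{x_ix_k}+\frac{x_k}{x_ix_j}\Bigr),\qquad\{i,j,k\}=\{1,2,3\},
\]
so $R=(-2r_1x_1,-2r_2x_2,-2r_3x_3)$ is rational and $\langle\nabla q,R\rangle$ becomes a polynomial after multiplying by $x_1x_2x_3>0$. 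Using the description of $\mu^{(1)}$, the surface $\{q=0\}\cap\R^3_{>0}$ is parametrized by $(x_1,x_2,x_3)=(a^2,b^2,(a+b)^2)$ with $ab(a+b)\ne 0$; inserting this and normalizing $b=1$ reduces the claim to a one-variable polynomial inequality in $a>0$ (together with its analogue for $a<0$), which I expect to collapse to an evident statement about positive quantities. A useful consistency check along the way: at any Einstein metric lying on $\mathcal C$ one has $R=\lambda x$, so $\langle\nabla q(x),R(x)\rangle=2\lambda\,q(x)=0$ automatically by Euler's identity for the quadratic $q$.

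The main obstacle is this last polynomial inequality. It is elementary in nature, but organizing the algebra so that the constraint $q=0$ is exploited cleanly — rather than brute-forcing the expansion — is the delicate point, and obtaining a conclusion valid simultaneously for the three families of Table \ref{tab-typeII} may require either the exact value of $c$ in terms of $d_1,d_2,d_3$ for $A_2$-type $T$-roots, or a monotonicity/sign argument insensitive to the precise admissible parameters.
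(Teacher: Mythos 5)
Your description of $\overline{\mathcal M}^{(2)}$ in the first paragraph is correct and essentially identical to the paper's, and you also set up the invariance argument in the same way as the paper: reduce to showing $\langle\nabla q(x),R(x)\rangle\le 0$ on $\{q=0\}\cap\R^3_{>0}$. However, the heart of the proof is precisely this sign verification, and you leave it as an acknowledged ``main obstacle'' rather than carrying it out; this is a genuine gap. The paper closes it by a short algebraic computation with the \emph{explicit} Ricci vector fields of Types $A$ and $D$ from \cite{ricci} (Type $E$ being a special case of $A$), organized around the three symmetric identities
\[
(-x+y+z)+(x-y+z)+(x+y-z)=x+y+z,\quad
(x-y+z)(x+y-z)=x^2-(y-z)^2,
\]
and, on the cone boundary $q=0$,
\[
(-x+y+z)(x-y+z)(x+y-z)=-8xyz .
\]
These identities let the six terms of $\langle\nabla F,R\rangle$ telescope into a single nonpositive expression ($-8xyz(px+ny+mz)$ for Type $A$, and an analogous one for Type $D$), with no case analysis and no radicals. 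Your alternative — parametrize the boundary by $(a^2,b^2,(a+b)^2)$, normalize $b=1$, and reduce to a one-variable polynomial inequality — is geometrically natural (it is exactly the image of $\mu^{(1)}$), and it would succeed: with this parametrization $q$ factors as $2(1+a)\cdot 2a(1+a)\cdot(-2a)=-8xyz$, consistent with the paper's identity, and the resulting one-variable expression does have a definite sign. But the argument as written ends before doing this, so the invariance claim is not actually established. A secondary caution: your uniform formula for $r_i$ in terms of a single structure constant $c$ and the $d_i$ is correct only up to an overall positive rescaling relative to the polynomial vector fields quoted in \cite{ricci} (the coefficient of $xyz$ you obtain after clearing denominators does not match the $2(m{+}n{+}p)$ in the paper's Type~$A$ field), so if you pursue your route you must either fix that normalization or note explicitly that a positive scalar multiple does not affect the sign of $\langle\nabla q,R\rangle$.
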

	\begin{proof}
		To prove the second part we show that the unprojected Ricci flow with vector vector field $R(x,y,z)$ points inwards at the boundary. 
		By $\eqref{eq-cone1}$, we have that $\overline{\mathcal{M}}^{(2)}$ is given in the first orthant by $F(x,y,z) \leq 0$, where
		$$
		F(x,y,z) = x^2 + y^2 + z^2 - 2(xy + xz + yz)
		$$
		Thus its outward pointing normal is
		$$
		\nabla F(x,y,z) = -2(-x + y + z, \,x - y + z, \, x + y - z)
		$$
		and we must show that the inner product $R(x,y,z) \cdot \nabla F(x,y,z) \leq 0$ at the boundary $F(x,y,z)=0$. For this, we will use the algebraic identities
		\begin{eqnarray}
			\label{eq-soma}
			(-x + y + z) + (x - y + z) + (x + y - z) = x + y + z \\
			\label{eq-produto2}
			(x - y + z)(x + y - z) = x^2 - (y-z)^2 =  x^2 - y^2 - z^2 + 2yz
		\end{eqnarray}
		and the following identities that follow from $F = 0$
		\begin{eqnarray}
			\label{eq-quadrados}
			x^2+y^2+z^2 & = & 2(xy + xz + yz) \\ 
			\label{eq-produto3}
			q := (-x + y + z)(x - y + z)(x + y - z) & = & -8xyz
		\end{eqnarray}
		To obtain the latter, first note that by \eqref{eq-produto3} we have $-y^2-z^2 = x^2 - 2(xy + xz + yz)$, replacing in \eqref{eq-produto2} we get 
		\begin{eqnarray}
			(x - y + z)(x + y - z) & = & 2x(x-(y+z)) \nonumber
		\end{eqnarray}
		It follows that 
		\begin{eqnarray}
			q & = &  (-x+y+z)2x(x-(y+z)) \nonumber\\
			& = &-2x(x-(y+x))^2 \nonumber
		\end{eqnarray}
		and then \eqref{eq-produto3} follows from $(x-(y+z))^2 = F(x,y,z) + 4yz$.
		
		In what follows we use the Ricci vector fields for three summands Type II flag manifolds of types $A$, $D$ and $E$ given in \cite{ricci}.  Type $E$ has the same Ricci vector field as type $A$ with $m=n=p=1$, thus we need only to consider types $A$ and $D$.

		\begin{itemize}
			\item Type $A$: the Ricci vector field $R(x,y,z)$ equals
			$$
			\hspace{-3cm}
			(-x(p(x^2-(y-z)^2) + 2(m+n)yz),\,
			-y(n(y^2-(z-x)^2) + 2(m+l)xz),\,
			-z(m(z^2-(x-y)^2) + 2(n+l)xy) )
			$$
			where $m,n,p$ are positive integers.
			Using \eqref{eq-produto2} and \eqref{eq-produto3} 
			we have that $R(x,y,z) \cdot \nabla F(x,y,z)$ equals
			\begin{eqnarray*}
				&& 2x(p(x^2-(y-z)^2) + 2(m+n)yz)(-x+y+z)\\
				&+& 2y(n(y^2-(z-x)^2) + 2(m+l)xz)(+x-y+z)\\
				&+& 2z(m(z^2-(x-y)^2) + 2(n+l)xy)(+x+y-z) \\
				&=& 2(px + ny + mz )(-x + y + z)(x - y + z)(x + y - z) \\
				&+& 4xyz ( (m+n)(-x+y+z) + (m+l)(+x-y+z) + (n+l)(+x+y-z)) \\ 
				&=& (4xyz - 8xyz) ( 2px + 2ny + 2mz ) \\
				&=& -8 xyz( px + ny + mz ) \leq 0
			\end{eqnarray*}
			
			\item Type $D$ the Ricci vector field $R(x,y,z)$ equals
			$$
			\hspace{-3cm}
			(-x((\ell-2)(-x^2-(y-z)^2) + 2\ell yz),\,
			-y((\ell-2)(y^2-(z-x)^2) +2\ell xz),\,
			-z(2(z^2-(x-y)^2) + 4(\ell-2)xy) )
			$$
			where $\ell \geq 4$ is an integer.
			Using \eqref{eq-soma}, \eqref{eq-produto2}, \eqref{eq-produto3} we have that $R(x,y,z) \cdot \nabla F(x,y,z)$ equals
			\begin{eqnarray*}
				&& (2x(\ell-2)(+x - y + z)(+x + y - z)+ \phantom{(\ell-2} 4\ell xyz)(-x+y+z)  \\
				&+& (2y(\ell-2)(-x + y + z)(+x + y - z) + \phantom{(\ell-2} 4\ell xyz)(+x-y+z) \\ 
				&+& \phantom{(\ell-2)} (4z(+x - y + z)(+x - y + z) + 8(\ell-2) xyz)(+x+y-z) \\ 
				&=&  2(\ell-2)q(x+y) -\ell q (z) \\
				&+&  \phantom{(\ell-2)}4qz - (\ell-2)q(+x+y-z)\\ 
				&=&  (\ell-2)q(x+y) + (-\ell + 4 + (\ell-2)) q(z) \\
				&=&  -8xyz((\ell-2)q(x+y) + 2z) \leq 0
			\end{eqnarray*}
		\end{itemize}
		Both are nonpositive and zero only at the coordinate planes $x$, $y$ or $z=0$.
	\end{proof}
	
	Now we consider the projected Ricci flow and realize it as a product of two adjoint orbits. Note that the convex cone $\overline{\mathcal{M}}^{(2)}$ intersects the simplex $\mathcal{T}$ in a disk $\mathcal{D}$ with boundary $\mathcal{C}$, illustrated in Figure \ref{JapanFlag}. 
	Furthermore, we have the following.
	
	\begin{proposicao}
		The disk $\mathcal{D}$ is positively invariant.
	\end{proposicao}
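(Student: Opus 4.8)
The plan is to show that the rescaled projected Ricci vector field is not outward‑pointing along the bounding circle $\mathcal{C}=\partial\mathcal{D}$ and then invoke the standard subtangency criterion for positive invariance of a closed set. Recall that $\mathcal{D}$ is the cross‑section of the invariant cone $\overline{\mathcal{M}}^{(2)}$ by the simplex, so that inside the hyperplane $\overline{x}=1$ it is cut out by the single inequality $F(x,y,z)\le 0$, where $F(x,y,z)=x^{2}+y^{2}+z^{2}-2(xy+xz+yz)$; its boundary $\mathcal{C}=\{F=0\}\cap\{\overline{x}=1\}$ is a smooth curve, since $\nabla F$ vanishes only at the origin, which does not lie on $\mathcal{C}$. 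The space‑rescaled projected field is $X(x)=f(x)\bigl(R(x)-\overline{R(x)}\,x\bigr)$ with $f>0$, it is tangent to $\{\overline{x}=1\}$, and the projected Ricci flow on $\mathcal{S}$ is its conjugate $Y=P\circ X\circ P^{-1}$; so it suffices to prove that $\mathcal{D}$ is positively invariant under $X$.

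The only thing to check is how the boundary estimate of the previous proposition for the unprojected field $R$ transfers to $X$, and this is immediate from homogeneity. Since $F$ is homogeneous of degree $2$, Euler's identity gives $\langle x,\nabla F(x)\rangle=2F(x)$, so at any $p\in\mathcal{C}\subseteq\{F=0\}$,
\[
\langle X(p),\nabla F(p)\rangle=f(p)\bigl(\langle R(p),\nabla F(p)\rangle-\overline{R(p)}\,\langle p,\nabla F(p)\rangle\bigr)=f(p)\,\langle R(p),\nabla F(p)\rangle\le 0,
\]
because $f(p)>0$ and $\langle R(p),\nabla F(p)\rangle\le 0$ by the previous proposition. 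Thus $X$ is subtangent to the closed set $\mathcal{D}=\{F\le 0\}\cap\{\overline{x}=1\}$ at every point of its boundary $\mathcal{C}$, where moreover $\nabla F\neq 0$; by the standard subtangency criterion this makes $\mathcal{D}$ positively invariant under the flow of $X$, and hence, transporting along the conjugacy $P$, under the projected Ricci flow $Y$.

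Alternatively, one can bypass invariance theorems entirely: $\overline{\mathcal{M}}^{(2)}$ is a cone which is positively invariant under $R$ by the previous proposition, $\mathcal{D}=\overline{\mathcal{M}}^{(2)}\cap\mathrm{cl}(\mathcal{T})$ is its radial cross‑section, and by Proposition \ref{thm-rescaling} the projected Ricci flow is obtained from the $R$‑flow by the radial rescaling onto $\mathrm{cl}(\mathcal{T})$, an operation that carries the invariant cone onto the invariant disk. Either way there is no real obstacle: the one subtle point is the passage from $R$ to the rescaled field $X$, which is dispatched by Euler's formula, and the behaviour at the three points where $\mathcal{C}$ is tangent to $\partial(\mathrm{cl}\,\mathcal{T})$ needs no separate treatment, since there $X$ vanishes and the subtangency inequality holds trivially.
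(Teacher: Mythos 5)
Your proof is correct and follows essentially the same route as the paper: both use Euler's identity for the degree-2 homogeneous function $F$ to show that $\nabla F\cdot x=2F=0$ on the boundary, conclude that $\nabla F\cdot X$ is a positive multiple of $\nabla F\cdot R$ there, and then invoke the boundary estimate $\nabla F\cdot R\le 0$ from the previous proposition. You spell out the subtangency criterion, the non-vanishing of $\nabla F$ on $\mathcal{C}$, and the tangency points $K,L,M$ more explicitly than the paper, but the core argument is identical.
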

	\begin{proof}
		The projected Ricci flow $X(x,y,z)$ is of the form \eqref{eq-def-X}. Using the notation of the proof of the previous proposition, 
		since $F(x,y,z)$ is a homogeneous function of degree $2$, it follows that $\nabla F(x,y,z) \cdot (x,y,z) = 2 F(x,y,z)=0$, whenever $F(x,y,z) = 0$. Thus $\nabla F(x,y,z) \cdot X(x,y,z)$ is a positive multiple of $\nabla F(x,y,z) \cdot R(x,y,z)$ on the cone boundary $F(x,y,z)=0$. The result then follows from the previous proof.
	\end{proof}
	
	
	
	
	
	\begin{teorema}\label{thm:embedding}
		The projected Ricci flow $g_t$ in the disk $\mathcal{D}$ is realizable as an equivariant embedding of the flag manifold.
	\end{teorema}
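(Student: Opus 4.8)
The plan is to produce, for each flow line $g_t$ with $g_0\in\mathcal D$, a \emph{continuous} family $(H_1(t),H_2(t))\in\t_\T^2$ such that the equivariant embedding $f_{(H_1(t),H_2(t))}:\F_\T\to\g^2$ of \eqref{eq-mergulho-produto} induces the metric $g_t$; since $\mathcal D$ is positively invariant by the projected Ricci flow, $g_t\in\mathcal D$ for all $t\ge 0$, so this is exactly what is needed. Recall that $\F_\T$ has rank $r=2$ and, being of Type II, has $T$-roots of type $A$; hence the linear map $\mu:\mathrm S(2)\to\overline{\mathcal M}$ of Theorem \ref{ithm:convex} is an isomorphism, and by Proposition \ref{prop:transposta} the induced-metric map factors as $\mu^{(2)}=\mu\circ\gamma^{(2)}$ with $\gamma^{(2)}:\gl(2,2)\to\mathrm{PS}(2)$, $X\mapsto XX^T$. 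The crucial point is that here $k=r=2$, so $\gl(2,2)=\gl(2)$ and $\gamma^{(2)}=\gamma$; thus realizing $g_t$ as an adjoint orbit in $\g^2$ amounts to finding a continuous path $X(t)\in\gl(2)$ with $\gamma(X(t))=\mu^{-1}(g_t)$.

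Now set $Y(t):=\mu^{-1}(g_t)$. Since $g_t\in\mathcal D\subseteq\overline{\mathcal M}^{(2)}=\mu(\mathrm{PS}(2))$, the path $t\mapsto Y(t)$ is continuous and lies in $\mathrm{PS}(2)$. Applying the continuous symmetric square root $\sigma:\mathrm{PS}(2)\to\mathrm S(2)$ of Lemma \ref{lem-gram}(iii), put $X(t)=\sigma(Y(t))=\sqrt{Y(t)}$. As $X(t)$ is symmetric, $\gamma(X(t))=X(t)X(t)^T=X(t)^2=Y(t)$, so $\mu^{(2)}(X(t))=\mu(Y(t))=g_t$. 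Writing $X(t)=(\,H_1(t)\mid H_2(t)\,)$ for its columns and transporting them to $\t_\T$ via the fixed orthonormal basis $\omega_1,\omega_2$, the family $f_{(H_1(t),H_2(t))}$ depends continuously on $t$, and by \eqref{eq-metrica-mergulho-produto} it realizes $(\F_\T,g_t)$ as the adjoint orbit of $(H_1(t),H_2(t))$ in $\g^2$; when $g_t$ lies in the interior of $\mathcal D$ relative to the simplex (i.e.\ $g_t$ is a genuine metric), one has $G_{(H_1(t),H_2(t))}=G_\T$ and $f_{(H_1(t),H_2(t))}$ is an embedding.

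I do not expect a serious obstacle here — the work has already been done in Lemma \ref{lem-gram} — but the step worth emphasizing is precisely the one where $k=r$: for $0<k<r$ the analogous section would be forbidden by Lemma \ref{lem-gram}(vi), so it is essential that the rank of this Type II family is exactly $2$ and that we take $2$ copies of the adjoint representation. Finally, when a flow line converges to one of the tangent points $K$, $L$, $M$ on $\mathcal C$ (a point of $\partial\mathcal T$, i.e.\ a collapsed limit), $Y(t)$ degenerates to a rank-one element of $\mathrm{PS}(2)$, but $\sigma$ is continuous on all of $\mathrm{PS}(2)$, so $X(t)$ still converges; the limiting adjoint orbit in $\g^2$ is then an embedded copy of the collapsed flag $\F_{(H_1(\infty),H_2(\infty))}$, which by Theorem \ref{ithm:convergence}(2) is the Gromov--Hausdorff limit of $(\F_\T,g_t)$. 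This gives the continuous realization asserted in Theorem \ref{thm:embedding} and item 2 of Theorem \ref{ithm:figura}.
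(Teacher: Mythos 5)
Your proof is correct and follows essentially the same approach as the paper: both construct the continuous section $\tau=\sigma\circ\mu^{-1}$ of $\mu^{(2)}$ using the symmetric square root from Lemma \ref{lem-gram}(iii) together with the linear isomorphism $\mu$, then push the flow $g_t$ forward through $\tau$ to obtain a continuous matrix path $H_t$ whose columns give the desired points of $\t_\T^2$. Your emphasis that the coincidence $k=r=2$ makes $\gamma^{(2)}=\gamma$ globally sectionable (which would be obstructed for $0<k<r$ by Lemma \ref{lem-gram}(vi)) is exactly the point the paper exploits.
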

	
	
	Now let $\sigma$ be the continuous section of $\gamma$ given by Lemma \ref{lem-gram}, item $(iii)$. It follows that 
	$\gamma^{(r)}(\sigma(Y)) = \sqrt{Y}\sqrt{Y}^T = \sqrt{Y}^2 = Y$, so that $\sigma$ is a continuous section of $\gamma^{(r)}$ which is smooth in the interior $\mathrm{P}(r)$ of $\mathrm{PS}(r)$. 
	If $\mu$ is injective, let $\mu^{-1}$ be its inverse when restricted its image, then
	$$
	\tau = \sigma \circ \mu^{-1}: \overline{\mathcal{M}}^{(r)} \to \mathrm{PS}(r) \subseteq \gl(r)
	$$
	is a continuous section of $\mu^{(r)}$, which is smooth in the interior of $\overline{\mathcal{M}}^{(r)}$.
	$$
	\begin{tikzcd}
		\gl(r) \arrow[swap]{dr}{\gamma^{(r)}} \arrow{rr}{\mu^{(r)}} & & \overline{\mathcal{M}}^{(r)} 
		\arrow[ll, bend right, swap, "\tau"]
		\\
		& \mathrm{PS}(r) \arrow[swap]{ur}{\mu}
		\arrow[ul, bend left=50, "\sigma"]
	\end{tikzcd}
	$$
	Indeed,
	$$
	\mu^{(r)} \circ \tau = (\mu \circ \gamma^{(r)}) \circ (\sigma \circ \mu^{-1}) = \mu \circ \mu^{-1} = \mathrm{id}
	$$
	Note that $\sigma$, and consequently $\tau$, can be gauged by the right $\mathrm{O}(r)$ action on $\gl(r)$.
	
	To produce an embedding of ${\mathcal D}$, consider the topological 2-disk 
	$$
	\widetilde{\mathcal{D}} = \tau(\mathcal{D}) \subset \gl(2) = \t_\T^2
	$$
	where the last identification sends the $i$-th column $(h_{1i}, h_{2i})$, $i=1,2$, to the $i$-th coordinate $h_{1i} \omega_1 + h_{2i} \omega_1 \in \t_\T$. Note that, since $\mu^{-1}(x,y,z) = \begin{smallpmatrix}
		x & \frac{1}{2}(z-x-y)\\
		\frac{1}{2}(z-x-y) & y 
	\end{smallpmatrix}
	$, we have that
	$$
	\tau(x,y,z) = \sqrt{
		\begin{pmatrix}
			x & \frac{1}{2}(z-x-y)\\
			\frac{1}{2}(z-x-y) & y 
		\end{pmatrix}
	}
	$$
	
	Then $g_t$ in $\mathcal{D}$ is conjugate to the flow $H_t = \tau(g_t)$ in $\widetilde{\mathcal{D}}$. The latter is  continuous on both $H$ and $t$, and is such that $(\F_\T, g_t)$ is isometric to the adjoint orbit
	$$
	\Ad(G)H_t \subseteq \g^2
	$$
	with the induced metric, for all appropriate $t$. 
	From Theorem \ref{ithm:convergence}, it follows that, when $g_t$ converges to a degenerate metric in the boundary $\mathcal{C}$ of $\mathcal{D}$, the adjoint orbits of the family $H_t$ realize the Gromov-Hausdorff collapse of the family of metric spaces defined by $(\F_\T, g_t)$ as a Hausdorff limit of sets inside $\g^2$.  
	
	This collapse has already been obtained intrinsically (Lemma 4.5 of \cite{ricci}), but the point of the present result is to realize them isometrically in Euclidean space.


	    

	\vspace{12pt}
	\begin{exemplo}
		Let $\F_\T$ be a three summand flag manifold of $SU(m)$. Let us show how some collapses are explicitly realized as Grassmann manifolds. First, suppose that $g_t$ collapses to the degenerate metric $L = (\frac{1}{2}, \frac{1}{2}, 0)$ and
		observe that
		$$
		\tau(L) = \sqrt{
			\begin{pmatrix}
				\frac{1}{2} & -\frac{1}{2}\\
				-\frac{1}{2} & \frac{1}{2} 
			\end{pmatrix}
		}
		=
		\begin{pmatrix}
			\frac{1}{2} & -\frac{1}{2}\\
			-\frac{1}{2} & \frac{1}{2} 
		\end{pmatrix}
		=
		(H, -H)
		$$
		since this matrix is a projection, with first column
		$$
		H = \frac{1}{2} \omega_1 - \frac{1}{2} \omega_2
		$$
		where $\omega_1,\omega_2\in\t_\T$ is the basis dual to $\alpha_1,\alpha_2$, chosen just after (\ref{eq:basedual}). Note that the isotropy of $(H,-H)$ of the adjoint action in $\g^2$ is the same as the isotropy of $H$ under the adjoint action in $\g$.
		Note also that 
		$$
		\alpha_1(H)=\frac{1}{2}, \qquad 
		\alpha_3(H)=\alpha_1(H) + \alpha_2(H) = \frac{1}{2} - \frac{1}{2} = 0.
		$$
		Thus concluding  that $H$ has the isotropy of a Grassmanian manifold.  
		Note that by multiplying $\tau(L)$ on the right by the rotation matrix of 45$^\circ$, we get
		$
		\begin{pmatrix}
			\frac{\sqrt{2}}{2} & 0 \\
			-\frac{\sqrt{2}}{2} & 0 
		\end{pmatrix}
		= (\sqrt{2}H, 0)
		$,
		which makes explicit the fact that $g_t$ converges to an adjoint orbit in $\g$.
		
		Likewise, the points  $M = (\frac{1}{2}, 0, \frac{1}{2})$ and $K = (0, \frac{1}{2}, \frac{1}{2})$ are entirely analogous. For instance,
		$$
		\tau(M) = \sqrt{
			\begin{pmatrix}
				\frac{1}{2} & 0\\
				0 & 0 
			\end{pmatrix}
		}
		=
		\begin{pmatrix}
			\frac{\sqrt{2}}{2} & 0\\
			0 & 0 
		\end{pmatrix}
		=
		(H, 0)
		$$
		where $H = \frac{\sqrt{2}}{2} \omega_1$ is such that
		$$
		\alpha_1(H) = \frac{\sqrt{2}}{2} \qquad 
		\alpha_2(H) = 0.
		$$
		Therefore,  $H$ has the isotropy of a Grassman manifold.
		
		By Theorem \ref{ithm:convergence}, in all three cases $(\F_\T, g_t) = \Ad(G)H_t$ converges to the Grassmanian $\Ad(G)H$ in Hausdorff distance in $\g^2$, as $t \to \infty$.
	\end{exemplo}
	\vspace{12pt}

	Back to general case, the next result shows that we cannot expect that the whole boundary of $\mathcal{D}$ gets mapped to the same factor $\t_\T$ of $\t_\T^2$.  Indeed, let $\rho$ an arbitrary section of $\mu^{(2)} = \mu \circ \gamma^{(2)}$. Note that at the boundary, we have that
	$$
	\rho(\mathcal{C}) \subseteq \gl(2)_1
	$$
	since
	$$
	\mathcal{C} \subseteq \overline{\mathcal{M}}^{(1)} = \mu( \gamma^{(1)}( \gl(2)_1) ) =  \mu^{(2)}|_{\gl(2)_1}
	$$
	Even though $\gamma(\gl(2)_1) = \gamma(\gl(2,1)_1)$, there is no way to continuosly gauge $\rho$ so that it sends $\mathcal{C}$ to $\gl(2,1)_1$.  More precisely, we have the following.
	
	\begin{proposicao}
		There does not exist a continuous section of $\mu^{(2)}$ which maps  $\mathcal{C}$ to $\gl(2,1)_1$.
	\end{proposicao}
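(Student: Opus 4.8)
The plan is to reduce, by restriction to $\mathcal{C}$, any such section to a continuous section of $\gamma^{(1)}\colon \gl(2,1)_1\to \mathrm{PS}(2)_1$ over the circle $\mathcal{C}$ (suitably identified), then to extend it over all of $\mathrm{PS}(2)_1$ by a radial deformation retraction, thereby contradicting item $(vi)$ of Lemma \ref{lem-gram}. Concretely, suppose $\rho\colon \ov{\mathcal{M}}^{(2)}\to \gl(2)$ is a continuous section of $\mu^{(2)}$ with $\rho(\mathcal{C})\subseteq \gl(2,1)_1$. Since $\mu^{(2)}=\mu\circ\gamma^{(2)}$ with $\mu$ a linear isomorphism onto $\ov{\mathcal{M}}^{(2)}$ (this is where type $A$ enters, cf.\ the proof of Theorem \ref{ithm:convex}), we get $\gamma^{(2)}\circ\rho=\mu^{-1}$ on $\ov{\mathcal{M}}^{(2)}$. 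Because $\gamma^{(2)}|_{\gl(2,1)}=\gamma^{(1)}$ and $\rho(\mathcal{C})\subseteq\gl(2,1)_1$, restricting to $\mathcal{C}\subseteq\ov{\mathcal{M}}^{(1)}\subseteq\ov{\mathcal{M}}^{(2)}$ yields $\gamma^{(1)}\circ(\rho|_{\mathcal{C}})=\mu^{-1}|_{\mathcal{C}}$. As $0\notin\mathcal{C}$ (since $\mathcal{C}\subseteq\mathcal{T}$), the set $\ell:=\mu^{-1}(\mathcal{C})$ lies in $\mathrm{PS}(2)_1$ and $\mu$ restricts to a homeomorphism $\ell\to\mathcal{C}$; hence $s:=\rho|_{\mathcal{C}}\circ(\mu|_{\ell})\colon\ell\to\gl(2,1)_1$ is a continuous section of $\gamma^{(1)}$ over $\ell$, i.e.\ $\gamma^{(1)}\circ s$ is the inclusion $\ell\hookrightarrow\mathrm{PS}(2)_1$.

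Next I would show that $\ell$ is a deformation retract of $\mathrm{PS}(2)_1$. The set $\ov{\mathcal{M}}^{(1)}$ is a cone (scaling $H\in\t_\T$ by $c$ scales each $\alpha(H)^2$ by $c^2$) contained in the closed first orthant, and the linear functional $\bar x=x_1+x_2+x_3$ is strictly positive on $\ov{\mathcal{M}}^{(1)}\setminus\{0\}$; so $(p,\tau)\mapsto p/((1-\tau)+\tau\bar x(p))$ deformation retracts $\ov{\mathcal{M}}^{(1)}\setminus\{0\}$ onto $\ov{\mathcal{M}}^{(1)}\cap\mathcal{T}=\mathcal{C}$ while staying inside the cone (it is just rescaling, so convexity is not needed). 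Transporting this through the linear isomorphism $\mu$, which carries $\ov{\mathcal{M}}^{(1)}\setminus\{0\}$ onto $\mathrm{PS}(2)_1$, gives a deformation retraction $H\colon\mathrm{PS}(2)_1\times[0,1]\to\mathrm{PS}(2)_1$ with $H_0=\mathrm{id}$ and $H_1=\iota\circ r$, where $r\colon\mathrm{PS}(2)_1\to\ell$ is the radial retraction and $\iota\colon\ell\hookrightarrow\mathrm{PS}(2)_1$. Finally, $\gamma^{(1)}\colon\gl(2,1)_1\to\mathrm{PS}(2)_1$ is a covering map: under the identification $\gl(2,1)_1\cong\R^2\setminus\{0\}$, $(v,0)\mapsto v$, it is the $2$-to-$1$ map $v\mapsto vv^T$, the quotient by the free $\mathbb{Z}/2$-action $v\mapsto -v$, so it has the homotopy lifting property. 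Applying it to the reversed homotopy $(p,\tau)\mapsto H(p,1-\tau)$, whose value at $\tau=0$ is $H_1=\iota\circ r$ and which is covered at $\tau=0$ by $s\circ r\colon\mathrm{PS}(2)_1\to\gl(2,1)_1$ (indeed $\gamma^{(1)}\circ(s\circ r)=\iota\circ r$), produces a lift whose value at $\tau=1$ is a continuous map $\mathrm{PS}(2)_1\to\gl(2,1)_1$ covering $H_0=\mathrm{id}$, i.e.\ a global continuous section of $\gamma^{(1)}\colon\gl(2,1)_1\to\mathrm{PS}(2)_1$. This contradicts Lemma \ref{lem-gram}$(vi)$ with $r=2$, $k=1$.

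I expect the main obstacle to be the extension step: one must verify carefully that the non-convex cone $\ov{\mathcal{M}}^{(1)}$ genuinely retracts radially onto $\mathcal{C}$, and that $\gamma^{(1)}$ is honestly a covering map so that the homotopy lifting property applies. An alternative that avoids the extension is to check directly that $\ell$ represents a generator of $\pi_1(\mathrm{PS}(2)_1)\cong\mathbb{Z}$ (again via the radial retraction) and then invoke the lifting criterion for the double cover $\gamma^{(1)}$, whose image $\gamma^{(1)}_{*}\pi_1(\gl(2,1)_1)=2\mathbb{Z}$ does not contain $\iota_{*}\pi_1(\ell)$; either way the topological input is exactly the non-triviality already isolated in item $(vi)$ of Lemma \ref{lem-gram}.
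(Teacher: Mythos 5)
Your proof is correct and reaches the same contradiction as the paper, via the same reduction: restrict the hypothetical section to $\mathcal{C}$, promote it to a section of $\gamma^{(1)}$ over all of $\mathrm{PS}(2)_1$, and contradict Lemma \ref{lem-gram}$(vi)$. The difference lies entirely in how the promotion step is carried out. The paper writes down a one-line explicit extension, $\widetilde{\rho}(p)=\sqrt{\overline{x}(p)}\,\rho\!\left(p/\overline{x}(p)\right)$, whose correctness falls out of the degree-$2$ homogeneity of $\mu^{(1)}$; this is elementary and requires no topological input beyond Lemma \ref{lem-gram}$(vi)$ at the end. You instead build the extension abstractly, by first exhibiting $\ell=\mu^{-1}(\mathcal{C})$ as a deformation retract of $\mathrm{PS}(2)_1$ (same radial scaling, just used as a homotopy rather than a formula), then observing $\gamma^{(1)}$ is the free $\mathbb{Z}/2$-quotient $\R^2\setminus\{0\}\to \mathrm{PS}(2)_1$, hence a covering, and invoking the homotopy lifting property. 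The two constructions produce the same lift; yours is heavier machinery for this step, though it makes the role of $\gamma^{(1)}$ being a nontrivial double cover explicit rather than deferring all the topology to Lemma \ref{lem-gram}$(vi)$. Your sketched alternative at the end is, in fact, a shade more efficient than either: once you know $\ell$ represents a generator of $\pi_1(\mathrm{PS}(2)_1)\cong\mathbb{Z}$ while $\gamma^{(1)}_{*}\pi_1(\gl(2,1)_1)=2\mathbb{Z}$, the lifting criterion already forbids a section over $\mathcal{C}$ itself, with no extension step needed at all. One small bookkeeping point worth spelling out in a write-up: you use that $\mu$ is a linear isomorphism onto its image (the Type~A computation in the proof of Theorem \ref{ithm:convex}); the paper's phrasing glosses over this as well, but it is needed to pass between sections of $\mu^{(1)}=\mu\circ\gamma^{(1)}$ and sections of $\gamma^{(1)}$.
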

	\begin{proof}
		Suppose that $\rho$ is such a section. Then for $(x,y,z) \in \mathcal{C}$ we have $\rho(x,y,z) \in \gl(2,1)_1$ so that
		\begin{equation}
			\label{secao-M2}
			(x,y,z) = \mu^{(2)}(\rho(x,y,z)) = \mu^{(1)}(\rho(x,y,z))
		\end{equation}
		Since $\mathcal{M}^{(1)}$ is the cone generated by $\mathcal{C}$ with vertex at the origin, we can define
		$$
		\widetilde{\rho}: \mathcal{M}^{(1)} \to \gl(2,1)_1
		\qquad
		\widetilde{\rho}(x,y,z) = \sqrt{(x+y+z)}\, \rho\!\left(\frac{x,\, y,\, z}{(x+y+z)}\right)
		$$
		where for $(x,y,z) \in \mathcal{M}^{(1)}$ we have 
		$x + y + z > 0$ and $(x,y,z)/(x+y+z) \in \mathcal{C}$.
		Since $\mu^{(1)}$ is homogeneous of degree $2$, equation \eqref{secao-M2} implies that
		$$
		\mu^{(1)}\widetilde{\rho}(x,y,z) = (x+y+z) \left(\frac{x,\, y,\, z}{(x+y+z)}\right) = (x,y,z)
		$$
		so that $\widetilde{\rho}$ is a continuous section of $\mu^{(1)}: \gl(2,1)_1 \to \mathcal{M}^{(1)}$.
		Since $\mu^{(1)} = \mu \circ \gamma^{(1)}$, where $\mu$ maps $ \mathrm{PS}(r)_1$ to $\mathcal{M}^{(1)}$, this corresponds to a continuous a section of $\gamma^{(1)}:  \gl(2,1)_1 \to \mathrm{PS}(r)_1$, which contradics item (vi) of Lemma \ref{lem-gram}.
	\end{proof}

	\section{Collapse and non-realization of metrics}\label{sec-collapse}

	This section aims at  explaining  why the vertex points $O,P,Q$ are not realized as orbit metrics. Indeed, there is no sequence of homogeneous  spaces whose limit realizes such metrics. As we will see, although $O,P,Q$ represents non-zero (degenerated) metrics,  Gromov--Hausdorff converging sequences to these points corresponds to sequences whose limits are single points (see Tables (27) and (31) in \cite{ricci}).	In order  to give a formal proof to this statement, we broaden our discussion to Gromov-Hausdorff limits of orbits. 
	
		Recall that the Gromov--Hausdorff distance between two metric spaces $(X_1,d_1)$ and $(X_2,d_2)$ is
	the infimum of the Hausdorff  distance between $X_1,X_2$ among all possible isometric embeddings of $X_1,X_2$. That is,  
	\[d_{GM}((X_1,d_1),(X_2,d_2))=\inf \{ d_H(X_1,X_2):(X_i,d_i)\subseteq (Z,d_Z)\text{ isometrically}\}, \]
	where  $d_H(X_1,X_2)$ denotes the Hausdorff distance between $X_1,X_2\subseteq Z$: 
	\[
	d_H(X_1,X_2)=\max\left\{ 
	\sup_{x_1 \in X_1} d_Z(x_1,X_2),\,  
	\sup_{x_2 \in X_2} d_Z(X_1,x_2)
	\right\}.
	\] 
	All limits of metric spaces below are in the Gromov--Hausdorff sense.
	Next, we recall Theorem 2.2 of \cite{ricci}, which characterizes limits of homogeneous metrics.
	
	As before, let $G/K$ be an homogeneous space, with $\g=\m\oplus\mathfrak k$, a $(\, ,)$-orthogonal decomposition. Identify $G$-invariant metrics in $G/K$  with the space of $K$-invariant inner products in $\m$. Let $g_n$ be a sequence of $G$-invariant metrics in $G/K$ converging to a (possibly) degenerate inner product $g$.
	Define
	\begin{align*}
		\ker g &=\{X\in \mathfrak m~:~g(X,\mathfrak m)=0\};\\
		\h &=  \text{ Lie algebra generated by }\ker g \oplus \k,
	\end{align*}
	which are $K$-invariant, since $g$, $\m$  and $\k$ are so. Let $H \leq G$ be the connected Lie subgroup with Lie algebra $\h$. 
	
	\begin{teorema}\label{thm:collapse}
		Let $(G/K,g_n)$, $g$ and $H$ be as above. Suppose that $H$ is closed, then
		\begin{equation*}
			\lim_{t\to \infty}(G/K, d_{g_n})=(G/H,d_F),
		\end{equation*}
		where $d_F$ is the distance metric induced by the (not necessarily smooth) $G$-invariant Finsler norm 
		\begin{align}\label{eq:Finsler-Lie}
			F_g: \mathfrak h^\perp&\to \mathbb R\\
			\nonumber	X &\mapsto \min\{ |\Ad(h)X + Z|_{g}:~ h\in H,~ Z \in \m \cap \h \}.
		\end{align}
		
	\end{teorema}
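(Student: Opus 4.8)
The plan is to understand the limiting distance on $G/K$ by analyzing how length of curves behaves under $g_n \to g$, and to identify that the directions in which lengths collapse to zero are precisely the ones along $\h$. First I would set up the submersion picture: consider the fibration $G/K \to G/H$ with fiber $H/K$. The idea is that $g_n$-distances within a fiber stay bounded (they are measured by inner products that converge, but the fiber is $H/K$ which may be positive-dimensional, and crucially the kernel directions $\ker g$ make the fiber shrink), while distances transverse to the fiber — parametrized by $\h^\perp$ — converge to something computable. So the first step is to prove the two ``easy'' inequalities separately: an upper bound $\limsup d_{g_n} \le d_F$ (via explicit curves) and a lower bound $\liminf d_{g_n} \ge d_F$ (via a length/projection argument), and then quote that these together with completeness give Gromov--Hausdorff convergence to $(G/H, d_F)$.

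For the upper bound, given two points $xK, yK$ and a horizontal-ish path in $G/H$ between their projections, I would lift it and estimate its $g_n$-length; the freedom to move within fibers and to adjust by $\Ad(h)$, $h \in H$, and by vectors in $\m \cap \h$ (which is exactly the domain over which the min in \eqref{eq:Finsler-Lie} is taken) lets me push the length down to $\int F_g(\dot\gamma)$ in the limit, because the $\m\cap\h$ part and the kernel part of $\ker g$ are annihilated by $g$. For the lower bound, I would project a near-minimizing $g_n$-geodesic from $G/K$ to $G/H$ and show its projected $g_n$-length (which only decreases under the projection, since the horizontal distribution is $g_n$-orthogonal to $\h^\perp$ up to terms controlled by the $\Ad(H)$-minimization) is at least $\int F_g$ minus an error going to $0$; here I use that $F_g$ is exactly the ``orthogonal leftover'' after modding out $\h$-directions, and that $g_n \to g$ uniformly on the compact unit sphere of $\m$. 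Finally, that $F_g$ is a genuine (possibly non-smooth, but continuous, positively homogeneous, convex) norm on $\h^\perp$ needs a short check: convexity follows from $F_g$ being a min of $\Ad(h)$-translates of the convex function $X \mapsto \mathrm{dist}_{g}(X, \m\cap\h)$, and positive-definiteness on $\h^\perp$ follows because if $F_g(X)=0$ then $X \in \h$ modulo $\ker g$, forcing $X \in \h \cap \h^\perp = 0$ after using that $\ker g \subseteq \h$.

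The main obstacle I expect is the lower bound, specifically controlling the interaction between the fiber directions and the horizontal directions along a sequence of geodesics whose behavior is a priori wild: a $g_n$-geodesic can spiral heavily inside the shrinking fibers, and one must show this spiraling cannot ``cheat'' to produce a shorter transverse displacement than $d_F$ allows. The clean way is to pass to the quotient Finsler structure: show that the projected curves in $G/H$ have $g_n$-length bounded below by their $F_{g_n}$-length for an auxiliary family of Finsler norms $F_{g_n} \to F_g$, where $F_{g_n}(X) = \min\{ |\Ad(h)X + Z|_{g_n} : h \in H, Z \in \m\cap\h\}$; the convergence $F_{g_n}\to F_g$ is uniform on compacts by compactness of $H/$ (stabilizer) and of the unit sphere, and then standard lower semicontinuity of length under uniform convergence of Finsler norms closes the argument. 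One should also double-check the hypothesis that $H$ is closed is used exactly where it is needed — namely so that $G/H$ is a manifold and $d_F$ is an honest metric rather than a pseudometric — and note that $K$-invariance of $\ker g$ and $\k \subseteq \h$ guarantee $\h$ is $\Ad(K)$-invariant so that $\h^\perp \cap \m$ is well defined and the whole construction descends.
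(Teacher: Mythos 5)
This theorem is not proved in the present paper: it is recalled verbatim as Theorem~2.2 of the reference \cite{ricci} (Grama--Martins--Patr\~ao--Seco--Speran\c{c}a), and the article here uses it as a black box. There is therefore no in-paper proof to compare against; I can only evaluate your plan on its own terms.

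The overall skeleton (fiber versus transverse directions, two-sided bound, a check that $F_g$ is a genuine norm, compactness to upgrade to Gromov--Hausdorff convergence) is reasonable, but there is a concrete gap in the fiber-collapse step. You assert that lengths collapse ``because the $\m \cap \h$ part and the kernel part of $\ker g$ are annihilated by $g$.'' That is false in general: $\h$ is by definition the Lie algebra \emph{generated} by $\ker g \oplus \k$, not $\ker g \oplus \k$ itself, and the paper's Theorem~\ref{prop:convergence}(3) is precisely about the situation where $\ker g \oplus \k$ is not already a subalgebra, so $\m \cap \h$ is strictly larger than $\ker g$ and $g$ is nondegenerate on the extra directions. (Indeed, if one had $\m\cap\h\subseteq\ker g$, then for $Z\in\m\cap\h$ one would have $|W+Z|_g=|W|_g$ for all $W$, and the minimization over $Z$ in the very definition of $F_g$ would be vacuous --- its presence is a signal that this inclusion fails in general.) Nevertheless the fiber $H/K$ must still collapse, and the mechanism is not that $g$ vanishes there but a Chow/bracket-generating argument: short curves tangent to $\ker g$ and to $\k$ have $g_n$-length tending to zero, and chaining such curves reaches the whole orbit of the Lie algebra they generate, so $\mathrm{diam}_{g_n}(H/K)\to 0$. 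Without this step your upper bound $\limsup d_{g_n}\le d_F$ does not go through, and the identification of the limit as $G/H$ rather than $G/\widetilde H$ with $\widetilde\h=\ker g\oplus\k$ is unjustified. The lower bound, which you correctly flag as the technical core, also has to be reconciled with this: the near-minimizing sequences will exploit exactly this bracket-generated ``cheap'' motion inside the fiber, so the projection/length-lower-semicontinuity argument must be phrased at the level of the quotient $G/H$ from the start, not relative to the naive fiber spanned by $\ker g$ alone.
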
 
	
	In particular, we have
	\begin{corolario}\label{cor:Finsler}
		if $g$ is $\Ad_G(H)$-invariant, then $F$  is the norm of the  Riemannian metric $g|_{\tilde{\n}}$, where $\tilde \n$ is the $g$-orthogonal complement of $\m\cap \h$ in $\m$.
	\end{corolario}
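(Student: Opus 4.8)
My plan is to read $F$ off directly from formula \eqref{eq:Finsler-Lie} of Theorem \ref{thm:collapse} and to simplify it using the extra hypothesis. First I would record two elementary structural facts. Since $\k\subseteq\h$ and $\g=\m\oplus\k$ is a $(\cdot,\cdot)$-orthogonal splitting, one gets $\h=(\m\cap\h)\oplus\k$, again $(\cdot,\cdot)$-orthogonally; consequently $\h^\perp$ is the $(\cdot,\cdot)$-orthogonal complement of $\m\cap\h$ inside $\m$, and $\ker g\subseteq\m\cap\h$. Second, I view $g$ as the positive semi-definite form on $\g$ extending the given one on $\m$ by $g(\k,\g)=0$ --- this is the reading under which the norm appearing in \eqref{eq:Finsler-Lie} makes sense --- and the hypothesis ``$g$ is $\Ad_G(H)$-invariant'' is understood for this extension.

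The core step is to show that the minimization over $h\in H$ in \eqref{eq:Finsler-Lie} is superfluous, that is, for $X\in\h^\perp$,
\[
F_g(X)=\min_{Z\in\m\cap\h}|X+Z|_g.
\]
The inequality ``$\le$'' is immediate by taking $h=e$. For ``$\ge$'', fix $h\in H$ and $Z\in\m\cap\h$. Using $\Ad(H)$-invariance of $g$ we have $|\Ad(h)X+Z|_g=|X+\Ad(h^{-1})Z|_g$; and since $\h$ is $\Ad(H)$-invariant, $\Ad(h^{-1})Z\in\h=(\m\cap\h)\oplus\k$, so its $\k$-component is $g$-null while its $\m$-component $Z'$ lies in $\m\cap\h$. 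As $X\in\m$ as well, $|\Ad(h)X+Z|_g=|X+Z'|_g\ge\min_{Z\in\m\cap\h}|X+Z|_g$. This yields the displayed identity.

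With the $h$-minimization removed, the right-hand side is exactly the $g$-distance from $X$ to the subspace $\m\cap\h$. Passing to the genuine (positive-definite) inner-product space $\m/\ker g$ and applying the orthogonal-projection formula, this distance equals $|X_{\tilde{\n}}|_g$, where $X=X_{\tilde{\n}}+X_0$ is the decomposition --- unique modulo $\ker g$ --- with $X_{\tilde{\n}}$ in $\tilde{\n}=\{Y\in\m : g(Y,\m\cap\h)=0\}$, the $g$-orthogonal complement of $\m\cap\h$, and $X_0\in\m\cap\h$. Hence $F_g(X)=|X_{\tilde{\n}}|_g$. To finish, I would note that the composite $\h^\perp\hookrightarrow\m\to\m/(\m\cap\h)\cong\tilde{\n}/\ker g$ identifies $(\h^\perp,F_g)$ with $(\tilde{\n},g|_{\tilde{\n}})$ as (semi-)normed spaces; in particular $F$ is the norm of the Riemannian metric $g|_{\tilde{\n}}$, which is the claim.

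The one genuinely delicate point, and where the hypothesis is used essentially, is the reduction to $h=e$: $\Ad(H)$-invariance of $g$ is what allows moving $\Ad(h^{-1})$ onto $Z$, and the splitting $\h=(\m\cap\h)\oplus\k$ is what guarantees that conjugating an element of $\m\cap\h$ produces a vector whose $g$-relevant (i.e.\ $\m$-)part still lies in $\m\cap\h$. Without invariance the whole orbit $\Ad(H)X$ enters \eqref{eq:Finsler-Lie} and $F$ need not be quadratic, so a little care is needed to set up the convention on $g$ (extension by zero on $\k$) so that $\Ad(h)X$, $Z$ and $X$ are all measured by the same seminorm.
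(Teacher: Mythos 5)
The paper states Corollary \ref{cor:Finsler} with no proof of its own, attributing both it and Theorem \ref{thm:collapse} to \cite{ricci}, so there is no in-paper argument to compare against. Your reconstruction is correct and is the natural derivation from formula \eqref{eq:Finsler-Lie}: $\Ad_G(H)$-invariance of $g$, together with $\Ad(H)\h=\h$ and the orthogonal splitting $\h=(\m\cap\h)\oplus\k$, lets you absorb $\Ad(h)$ into the $Z$-variable and collapse the two-parameter minimum to $\min_{Z\in\m\cap\h}|X+Z|_g$; this is the $g$-distance from $X$ to $\m\cap\h$, which by orthogonal projection equals $|X_{\tilde\n}|_g$, and the linear isomorphism $\h^\perp\cong\m/(\m\cap\h)\cong\tilde\n$ (modulo $\ker g$) gives the stated identification of norms. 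Two small points of precision, neither of which is a gap: the quantity measured in \eqref{eq:Finsler-Lie} is $\Ad(h)X+Z$ with $\Ad(h)X\in\h^\perp\subseteq\m$ (because $\Ad(H)\h^\perp=\h^\perp$) and $Z\in\m\cap\h\subseteq\m$, so all vectors already live in $\m$ and the zero-extension of $g$ to $\k$ is a convenient bookkeeping device for stating $\Ad_G(H)$-invariance rather than a necessity for the norm to make sense; and since $\ker g\subseteq\m\cap\h$ forces $\ker g\subseteq\tilde\n\cap(\m\cap\h)$, the phrase ``$g$-orthogonal complement'' should be read as a chosen complementary subspace of $\m\cap\h$ that is $g$-perpendicular to it (so that $g|_{\tilde\n}$ is genuinely positive definite), rather than the full annihilator $\{Y:g(Y,\m\cap\h)=0\}$ --- a distinction you effectively resolve by passing to $\m/\ker g$.
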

	
	For the remaining of the section, let $G\to O(V)$ be an orthogonal representation of $G$ in an Euclidean space $V$. 
	Given $v \in V$, recall the isotropy subgroup, given by
	$
	G_v = \{ g \in G:~ gv = v \}
	$,
	so that the representation induces a $G$-equivariant realization $i: G/G_v \to Gv \subseteq V$ with the induced metric.	More generally, one can consider $V$ a Riemannian manifold and  $G\times V\to V$ a smooth action by isometries. 
	
	Let $v_n$ be a sequence in $V$ converging to $v\in V$.
	One would expect that the Gromov--Hausdorff limit of a sequence of the orbits $Gv_n$ coincides with the limiting orbit $Gv$, together with its induced metrics from the embedding in $V$. Such a sequence  avoids the two relatively unexpected behaviours in Theorem \ref{thm:collapse}: the kernel of the limiting metric coincides with the isotropy of $v$, which is already a subalgebra; the limiting distance is induced by a Riemannian metric, not only a Finsler norm. This behaviour allows us to guarantee that many sequences of metrics, together with its limits, are not realizable as sequences of orbits. 


	
	\begin{teorema}\label{prop:convergence}
		Let $g_n$ be a sequence of $K$-invariant inner products in $\m$ converging to $g$,  a positive semi-definite bilinear form in $\m$.
		Then,
		\begin{enumerate}
			\item The Gromov-Hausdorff limit of $(G/K,d_{g_n})$ does not depend on $g_n$, only on the limit $g$, where $d_{g_n}$ is the geodesic distance induced by $g_n$;
			\item If $v_n \in V$ converges to $v \in V$, put $K_n = G_{v_n}$, $K = G_v$ and let $g_n$ be induced by the  natural embedding $i_n:G/K_n \to Gv_n$. Then $(G/K_n,d_{g_n})$ converges to $(G/K,d_g)$ in the Gromov-Hausdorff sense;
			\item Suppose that $\ker g\oplus\mathfrak{k}$ is not a subalgebra and let $K_n = G_{v_n}$. Then, there is no sequence of isometric equivariant embeddings $i_n:(G/K_n,g_n)\to (V,\tilde g)$ such that  $g_n\to g$.
		\end{enumerate}
	\end{teorema}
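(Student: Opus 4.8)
The plan is to treat the three items in turn; items~2 and~3 are near-mirror images of each other, item~2 producing the limit from an embedding and recognising it as a Riemannian homogeneous space, and item~3 showing that a sequence of embeddings would force precisely such a Riemannian limit, contradicting the failure of $\ker g\oplus\mathfrak k$ to be a subalgebra. Throughout, for $X\in\mathfrak g$ and $w\in V$ write $X\cdot w:=\tfrac{d}{dt}\exp(tX)w|_{t=0}$ for the fundamental vector field of $X$; since $G$ acts linearly, $w\mapsto X\cdot w$ is linear, and it vanishes (as $X$ runs over $\mathfrak g$) exactly on the fixed subspace $V^{G}$. \emph{Item~1} is a direct reading of Theorem~\ref{thm:collapse}: its conclusion $(G/H,d_F)$ is built only from $\mathfrak h=\langle\ker g\oplus\mathfrak k\rangle$ and from the Finsler norm $F_g$ of \eqref{eq:Finsler-Lie}, neither of which sees the approximating sequence, only the limit $g$. (If $H$ is not closed, the same computation with $\bar H$ in its place gives the limit, since the limiting distance already collapses along $\bar H$-orbits.)

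\emph{Item~2.} Here $g$ is the metric on $G/K=G/G_v$ induced by the limiting embedding $i\colon G/G_v\to Gv\subseteq V$, with $\mathfrak k=\mathfrak g_v=\{X:X\cdot v=0\}$, so $g$ is a genuine Riemannian metric. First, $Gv_n\to Gv$ in Hausdorff distance inside $V$, by uniform continuity of the action and compactness of $G$. I then use the correspondence $\{(gv_n,gv):g\in G\}$ and bound its distortion on both sides. For the upper bound I lift a $g$-minimizing geodesic of $Gv$ to a path $\tilde\gamma$ in $G$ of controlled length (picking a $G$-invariant metric on $G$ that makes $G\to(Gv,g)$ a Riemannian submersion); running $\tilde\gamma$ on $v_n$ produces a path whose $g_n$-length differs from the geodesic's $g$-length by $O(|v_n-v|)$, because $w\mapsto A(t)\cdot(\tilde\gamma(t)w)$ is linear in $w$, and whose endpoints lie within the $G_v$-orbits of $v_n$, which have $d_{g_n}$-diameter $O(|v_n-v|)$ since $Z\cdot v=0$ for $Z\in\mathfrak g_v$. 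For the lower bound I push a $g_n$-minimizing path of $Gv_n$ forward by the nearest-point projection onto $Gv$ (defined on a tubular neighbourhood, with Lipschitz constant $\to1$), whose endpoints land $O(d(v_n,Gv))$-close to $g_1v,g_2v$. All errors are uniform in $g_1,g_2$ because $Gv$ and $G_v$ are compact, so the distortion tends to $0$ and $(G/K_n,d_{g_n})\to(G/K,d_g)$.

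\emph{Item~3.} Suppose, for contradiction, there are equivariant isometric embeddings $i_n\colon(G/K,g_n)\to(V,\tilde g)$ with $g_n\to g$; put $v_n=i_n(b)$, so $G_{v_n}=K$. Then $g_n(X,Y)=\tilde g(X\cdot v_n,Y\cdot v_n)$ for $X,Y\in\mathfrak m$, while $X\cdot v_n=0$ for $X\in\mathfrak k$; as $(g_n)$ is a convergent, hence bounded, sequence, $|X\cdot v_n|$ is bounded for every $X\in\mathfrak g$. Since $V$ is Euclidean I may discard its trivial $G$-summands without changing $g_n$ or the orbits, so $V^{G}=0$; then $w\mapsto\sup_{|X|\le1}|X\cdot w|$ is a norm, and boundedness of the $|X\cdot v_n|$ forces $(v_n)$ bounded. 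Passing to a subsequence, $v_n\to v$. From $G_{v_n}=K$ we get $\mathfrak k\subseteq\mathfrak g_v$, hence $T_v(Gv)=\{X\cdot v:X\in\mathfrak m\}$; passing to the limit in the identity above gives $g(X,Y)=\tilde g(X\cdot v,Y\cdot v)$, and since $\tilde g$ restricts to an inner product on $T_v(Gv)$ this yields $\ker g=\{X\in\mathfrak m:X\cdot v=0\}=\mathfrak m\cap\mathfrak g_v$. Therefore $\ker g\oplus\mathfrak k=(\mathfrak g_v\cap\mathfrak m)\oplus\mathfrak k=\mathfrak g_v$, the Lie algebra of the subgroup $G_v$ — a subalgebra, contradicting the hypothesis.

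\emph{Main obstacle.} The one genuinely delicate point is the boundedness of $(v_n)$ in item~3: a priori the $v_n$ could escape to infinity along $G$-fixed directions while still inducing the prescribed metrics, so one must first observe that such directions are invisible to the equivariant geometry (i.e.\ pass to the sum of the nontrivial $G$-summands of $V$). Everything after that is soft — upper semicontinuity of the isotropy and nondegeneracy of $\tilde g$ on orbit tangent spaces suffice. In item~2 the only comparable subtlety is checking that the distortion estimates are genuinely uniform $o(1)$ — equivalently, that the collapse absorbs exactly the degeneracy of the limit metric — and item~1 is pure bookkeeping.
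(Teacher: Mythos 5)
Your proposal is correct, and in two of the three items it diverges from the paper in interesting ways.

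\emph{Item 1.} Same as the paper: both read off the conclusion from Theorem~\ref{thm:collapse}, whose output $(G/H,d_F)$ is built only from $g$.

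\emph{Item 2.} You take a genuinely different route. The paper first reduces to a subsequence with constant isotropy via the finiteness of orbit types near $Gv$ (Bredon) and a Slice Theorem argument, computes $g_n(X,Y)=\langle Xv_n,Yv_n\rangle$, identifies $\ker g=\m\cap\h$ with $\h=\g_v=\k\oplus\ker g$, and then \emph{invokes} Theorem~\ref{thm:collapse} and Corollary~\ref{cor:Finsler} as black boxes. You instead estimate directly the distortion of the correspondence $\{(gv_n,gv):g\in G\}$, using a horizontal lift through the Riemannian submersion $G\to(Gv,g)$ for the upper bound and nearest-point projection onto $Gv$ for the lower bound, with endpoint corrections absorbed into the $O(|v_n-v|)$-diameter of $G_v$-orbits of $v_n$. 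This bypasses both the orbit-type reduction and the machinery of Theorem~\ref{thm:collapse}, at the cost of the tubular-neighbourhood and horizontal-lift estimates; it is more elementary and self-contained, and (as you note implicitly) does not even need $K_n$ to be constant along the sequence. Both arguments are sound.

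\emph{Item 3.} Here you fill a genuine gap that the paper's own proof glosses over. The paper passes to ``a subsequence with fixed isotropy group'' and then asserts $\ker g=\ker i^*\tilde g=\g_v$ ``whenever $g$ is the metric induced by the embedding $i:G/K_v\to Gv$''---but it never establishes that the basepoints $v_n$ converge (or are even bounded), which is needed for a limiting embedding $i$ to exist at all. Your observation that one may project away the trivial isotypic summand $V^G$ without changing either $G_{v_n}$ or $g_n$ (since $X\cdot w$ vanishes on $V^G$), making $w\mapsto\sup_{|X|\le1}|X\cdot w|$ a norm on what remains and thereby forcing $(v_n)$ bounded from the boundedness of $(g_n)$, closes this gap cleanly. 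The rest of your argument---$\k\subseteq\g_v$ by upper semicontinuity of isotropy, $\ker g=\m\cap\g_v$, hence $\ker g\oplus\k=\g_v$ a subalgebra, contradiction---agrees with the paper's intended conclusion.
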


	\begin{proof}
		The first item follows directly from Theorem \ref{thm:collapse}. 
		It is sufficient to prove the second item assuming $G_{v_n}=K$ constant: 
		given a compact neighborhood of $Gv$, from \cite[Proposition IV.1.2]{bredon} there are only a finite number of orbit types touching it. That is, there is a finite number of subgroups $K_1,...,K_r$ whose conjugation classes are realized as isotropy subgroups of points in the neighborhood. In particular, for each $n$, $Gv_n\in \{G/K_1,...,G/K_r\}$.  It is then sufficient to prove Theorem \ref{prop:convergence} for each (infinite) subsequence of $v_n$ corresponding to  fixed orbit types. Taking these observations into account, we assume that $G_{v_n}=K$ is a fixed subgroup and  write $G_v=H$. A Slice Theorem argument guarantees that we can further choose $v_n$ such that $K<H$ (recall that $Gv$ has a neighborhood equivariantly diffeomorphic to  $G{\times_H} D$, where $D$ is a disc in $T_v({Gv})$; $G{\times_H} D$ is the quotient of $G\times D$ by the $H$-action given by left translation on $G$ and the isotropy representation on $D$; and $G$ acts on $G{\times_H} D$ by right translation on $G$.)
		
		
		The Proposition now follows from Theorem \ref{thm:collapse} and Corollary \ref{cor:Finsler} in \cite{ricci}: let $\g=\m\oplus\mathfrak k$ be a $(\, ,)$-orthogonal decomposition of $\g$, so that $\m$ is $K$-invariant. Consider the sequence of $K$-invariant metrics $g_n$ on $\m$ induced by the natural embeddings $i_n: G/K\to Gv_n$, $gK \mapsto g v_n$.
		Namely, let $X \in \g$ and denote by $X \cdot K$ the tangent vector of $G/K$ induced by $X$ at the basepoint $K$. Then $d i_n( X \cdot K ) = X v_n$ and, for $X, Y \in \m$, 
		$$
		g_n(X,Y) = \langle  d i_n( X \cdot K ), d i_n( Y \cdot K )  \rangle
		= \langle  X v_n, Y v_n \rangle.
		$$
		Since $v_n$ converges to $v$, $g_n$ converges to the (possibly degenerate) bilinear form $g$ defined  by the embedding $i:G/H\to Gv$. 
		By continuity, the metric induced by the form $g$, through the natural $K$-equivariant identification $\m/\m\cap \h \simeq \g/\h$, must coincide with the metric induced by the embedding  $i:G/H\to Gv$. 
		Since the action of $G$ is by isometries, $g$ must be $H$-invariant and, furthermore, the kernel of $g$ must be $\m\cap \h$.  Since $\g=\m\oplus\mathfrak k$ and $\mathfrak k \subseteq \h$, we get that $\h = \mathfrak{k} \oplus (\ker g)$ is a Lie algebra. Applying Theorem \ref{thm:collapse} and Corollary \ref{cor:Finsler}, we conclude that the induced metric spaces $(G/K,d_{g_n})$ converges to $(G/H,d_g)$.
		
		The third item follows in a similar way. Assume that there is a sequence $i_n$ and consider a subsequence with fixed isotropy group. Now the item follow since $\ker g=\ker i^*\tilde g=\g_v$ whenever $g$ is the metric induced by the embedding $i:G/K_v\to Gv$; and $g_n=i_n^*\tilde g$ converges uniformly to $g$.
	\end{proof}
	
The collapsing statements of Theorem \ref{ithm:figura} now follow:

\begin{corolario}\label{icorol:figura}
For the projected Ricci flow of Theorem \ref{ithm:figura} we have that
\begin{enumerate}
    \item The collapses $K, L, M$ are realized by isometric embeddings of the Ricci flow into the euclidean space $\g^2$.
    \item The collapses at $O,P,Q$ cannot be realized as equivariant isometric embeddings into a fixed Euclidean space.
\end{enumerate}
\end{corolario}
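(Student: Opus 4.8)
The plan is to handle the two items separately: item~1 is essentially Theorem~\ref{thm:embedding} combined with item~2 of Theorem~\ref{prop:convergence}, while item~2 follows from item~3 of Theorem~\ref{prop:convergence} after verifying a single bracket relation in the isotropy algebra of the flag, which is the only genuine computation in the argument.

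For item~1, recall that $K$, $L$, $M$ lie on $\mathcal{C}=\partial\mathcal{D}$ and that the relevant collapsing flow lines of Theorem~\ref{ithm:figura} converging to one of them are contained in the (positively invariant) disk $\mathcal{D}$. For such a flow line $g_t$ I would push it through the continuous section $\tau=\sigma\circ\mu^{-1}$ of $\mu^{(2)}$ constructed before Theorem~\ref{thm:embedding}, obtaining a continuous path $H_t=\tau(g_t)$ in $\gl(2)\cong\t_\T^2$ for which $\Ad(G)H_t\subseteq\g^2$, with the induced metric, is isometric to $(\F_\T,g_t)$. Since $K,L,M\in\mathcal{C}=\overline{\mathcal{M}}^{(1)}$, the limiting matrices $\tau(K),\tau(L),\tau(M)$ have rank one, so—just as in the Example—they are (after a right rotation) adjoint orbits sitting inside $\g\subseteq\g^2$ carrying the isotropy of a Grassmannian. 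Item~2 of Theorem~\ref{prop:convergence} then yields that $\Ad(G)H_t$ converges in Hausdorff distance inside $\g^2$ to this Grassmannian, which is precisely the assertion that the collapses at $K$, $L$, $M$ are realized in $\g^2$.

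For item~2, I would first identify the degenerate metrics at the vertices: $O$, $P$, $Q$ are the three points of the simplex $\mathcal{T}$ at which two of the three coordinates of $x_1B_1+x_2B_2+x_3B_3$ vanish, so the corresponding positive semidefinite forms $g$ have $\ker g$ equal to $\m_2\oplus\m_3$, $\m_1\oplus\m_3$ and $\m_1\oplus\m_2$ respectively. A putative realization of the Ricci flow approaching such a vertex by equivariant isometric embeddings into \emph{some} fixed Euclidean space $V$ carrying an orthogonal $G$-action is, unwinding the definitions, precisely a sequence of equivariant isometric embeddings $i_n\colon(\F_\T,g_n)\to(V,\tilde g)$ with $g_n\to g$. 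Hence, by item~3 of Theorem~\ref{prop:convergence}, it suffices to prove that $\ker g\oplus\k$ is \emph{not} a subalgebra for each of the three forms $g$ above.

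The bracket relations make this transparent, and here is where the substance of the argument sits. Because $\F_\T$ has $T$-roots of type $A$ one may label the representatives so that $[\alpha_1]+[\alpha_2]=[\alpha_3]$; since none of $[2\alpha_i]$, $[\alpha_1]-[\alpha_2]$, $[\alpha_1]+[\alpha_3]$ is a $T$-root, the brackets of the real root spaces collapse to $[\m_i,\m_i]\subseteq\k$, $[\m_1,\m_2]\subseteq\m_3$, $[\m_1,\m_3]\subseteq\m_2$ and $[\m_2,\m_3]\subseteq\m_1$. For each of the three vertices the relevant off-diagonal bracket lands in the one summand omitted from $\ker g\oplus\k$, so the only thing to check is that this bracket is nonzero. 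This last nonvanishing is the genuine obstacle: I would deduce it by choosing actual roots $\beta_i$ with $[\beta_i]=[\alpha_i]$ and $\beta_1+\beta_2$ a root, so that the structure constant $N_{\beta_1,\beta_2}\neq0$ forces a nonzero component of $[\m_1,\m_2]$ in $\m_3$ (and symmetrically for the other two products)—equivalently, this is the well-known nonvanishing of the ``triangle'' structure constant of a three-summand flag manifold. With this in hand none of $\m_i\oplus\m_j\oplus\k$ is a subalgebra, item~3 of Theorem~\ref{prop:convergence} applies verbatim, and the collapses at $O$, $P$, $Q$ admit no equivariant isometric realization in any fixed Euclidean space. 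Apart from this structural nonvanishing, the entire corollary is bookkeeping on top of Theorems~\ref{thm:embedding} and~\ref{prop:convergence}.
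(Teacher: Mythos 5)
Your proof is correct and, at its core, follows the paper's route for both items: item~1 comes from the continuous section $\tau$ behind Theorem~\ref{thm:embedding} together with item~2 of Theorem~\ref{prop:convergence}, and item~2 comes from item~3 of Theorem~\ref{prop:convergence} after verifying the non-subalgebra hypothesis at the vertices. Where you genuinely diverge from the paper is in that verification. The paper simply cites Tables~(33), (38) and (39) of~\cite{ricci}, which record the relevant structure constants explicitly. You instead give the structural Lie-theoretic argument: using that the $T$-roots form an $A_2$ system to pin down $[\m_i,\m_j]\subseteq\m_k$ for $\{i,j,k\}=\{1,2,3\}$ and $[\m_i,\m_i]\subseteq\k$, and then invoking the nonvanishing of the ``triangle'' structure constant to conclude that the bracket escapes $\ker g\oplus\k$ at each of $O,P,Q$. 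This is more self-contained and conceptually cleaner than a table lookup. The one step you wave at is the existence of roots $\beta_1,\beta_2$ with $[\beta_i]=[\alpha_i]$ and $\beta_1+\beta_2$ a root; this does need the short root-chain argument (take a root $\gamma$ with $[\gamma]=[\alpha_1]+[\alpha_2]$ of minimal height; some simple root can be subtracted from $\gamma$, and if it lies in $\T$ the $T$-root is preserved with strictly smaller height, so one eventually peels off $\alpha_1$ or $\alpha_2$). Since by your own framing this nonvanishing is ``the only genuine computation,'' it deserves that one-line justification rather than an appeal to folklore; with it added your argument is airtight and yields the paper's conclusion without consulting the auxiliary tables.
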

\begin{proof}
Item 1 is immediate from item 2 of Theorem \ref{prop:convergence} and the first statement in item 2 of Theorem \ref{ithm:figura}. For item 2, the collapses $O,P,Q$ satisfy the hypothesis of item 3 of Theorem \ref{ithm:convergence}, by Tables (33), (38) and (39) of \cite{ricci}.
\end{proof}

	In general, we do not know how vast is the class of flow lines that can be realized in an orbit space. But the result above guarantee that there is no smooth action $G\times V\to V$ where an entire line flow ending in $O,P$ or $Q$ is realized.
	
	On the other hand, \cite{moore1980equivariant} shows that any compact subset of a line flow can equivariantly embedded.
	
	\begin{teorema}[Moore--Schlafly \cite{moore1980equivariant}]\label{thm:schlafly}
		Let $M$ be a compact Riemannian manifold, possibly with boundary, and $G$ a compact Lie group acting on $M$ by isometries. Then, there is an orthogonal representation $\rho:G\to O(V)$ and an isometric equivariant embedding of $M$ into $(V,\rho)$.
	\end{teorema}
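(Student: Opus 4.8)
The plan is to combine the classical equivariant smooth embedding theorem of Mostow and Palais with an equivariant version of Nash's isometric embedding theorem, the guiding principle being that every analytic construction entering Nash's argument is a $G$-equivariant operation and can therefore be performed within the category of $G$-equivariant maps and tensor fields. First I would record that the set of $G$-invariant symmetric $2$-tensors on $M$ of the form $\varphi^{*}\langle\cdot,\cdot\rangle$, for $\varphi\colon M\to W$ a smooth $G$-equivariant map into an orthogonal representation $W$, is a convex cone: it is closed under sums, since $(\varphi_{1},\varphi_{2})\colon M\to W_{1}\oplus W_{2}$ induces $\varphi_{1}^{*}\langle\cdot,\cdot\rangle+\varphi_{2}^{*}\langle\cdot,\cdot\rangle$, and under multiplication by $\lambda>0$ via $\sqrt{\lambda}\,\varphi$. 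By Mostow--Palais there is a smooth $G$-equivariant embedding $j_{0}\colon M\to W_{0}$ into some orthogonal representation; enlarging $W_{0}$ if necessary by further equivariant coordinates — which exist in abundance because $C^{\infty}(M)$ is a sum of finite-dimensional $G$-representations — we may assume $j_{0}$ is moreover \emph{free}, i.e.\ its first and second partial derivatives are everywhere linearly independent. Since freeness, injectivity and immersivity are preserved under adding coordinates, replacing any equivariant $\varphi$ by $(\varphi,\varepsilon j_{0})$ turns it into a free $G$-equivariant embedding while changing $\varphi^{*}\langle\cdot,\cdot\rangle$ by the arbitrarily small $G$-invariant amount $\varepsilon^{2}j_{0}^{*}\langle\cdot,\cdot\rangle$. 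Hence it suffices, given the prescribed $G$-invariant metric $g$, to produce an equivariant $\Phi$ with $\Phi^{*}\langle\cdot,\cdot\rangle$ as close to $g$ in the $C^{2}$ topology as we wish, and then to correct the resulting approximately isometric free equivariant embedding to an exactly isometric one. (If $M$ has boundary, first pass to the double $\widehat{M}$ with the reflected $G$-invariant metric, which after a $G$-equivariant smoothing near the seam is closed; embed it equivariantly and isometrically and restrict.)

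For the approximate realization I would use the heat-kernel embedding. Since $G$ acts by isometries, the Laplace--Beltrami operator $\Delta_{g}$ commutes with the $G$-action on $C^{\infty}(M)$, so each eigenspace $E_{\lambda_{k}}$ is a finite-dimensional subrepresentation on which $G$ acts orthogonally for the $L^{2}$ inner product. For $t>0$ the weighted evaluation map
\[
\Phi_{t}\colon M\to\bigoplus_{k\le N}E_{\lambda_{k}},\qquad
x\mapsto\big(e^{-\lambda_{k}t/2}\,\mathrm{ev}_{x}\big)_{k\le N},
\]
built using the $G$-invariant identifications $E_{\lambda_{k}}^{*}\cong E_{\lambda_{k}}$, is $G$-equivariant, and by the asymptotic expansion of B\'erard--Besson--Gallot, as $t\to 0$ and $N=N(t)\to\infty$ the form $\Phi_{t}^{*}\langle\cdot,\cdot\rangle$, multiplied by the universal constant depending only on $t$ and $\dim M$, converges to $g$ in $C^{\infty}$. (Alternatively, build such a $\Phi$ by Nash's spiral perturbation inside $G$-saturated slice neighbourhoods $G\times_{H}U$, pieced together with an equivariant partition of unity obtained by averaging an ordinary one over the Haar measure of $G$.) Combining with $\varepsilon j_{0}$ as above yields, for every $\delta>0$, a $G$-equivariant free embedding $\Psi$ with $\|\Psi^{*}\langle\cdot,\cdot\rangle-g\|_{C^{2}}<\delta$.

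Finally I would correct $\Psi$ to an exact isometric embedding. Writing $h=g-\Psi^{*}\langle\cdot,\cdot\rangle$, a small $G$-invariant symmetric $2$-tensor, one seeks a correction $v\colon M\to W$ with values normal to $\mathrm{Im}(d\Psi)$ solving $2\langle d\Psi,dv\rangle+\langle dv,dv\rangle=h$. Because $\Psi$ is free, this is precisely the setting of Nash's perturbation theorem in the simplified form of G\"unther: the equation is solved by a fixed-point iteration in H\"older spaces whose only ingredients are the Green operator of a Laplacian and pointwise algebraic operations built from $d\Psi$ and the second fundamental form of $\Psi$. Since $\Psi$ is $G$-equivariant and $G$ acts isometrically on $M$ and orthogonally on $W$, each of these ingredients intertwines the $G$-action; hence the iteration preserves the closed, complete subspace of $G$-equivariant sections, and, started at $\Psi$, converges to a $G$-equivariant $v$. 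Then $\Psi+v\colon M\to W$ is the desired $G$-equivariant isometric embedding of $(M,g)$. The only genuinely new work beyond quoting Nash's theorem is this equivariance bookkeeping — checking that every operator in the iteration commutes with the $G$-action — and it succeeds exactly because $G$ is compact: Haar averaging and complete reducibility provide the finite-dimensional eigenspaces and invariant complements used above, and the defining condition ``induced metric equals $g$'' is itself $G$-invariant and so stable under the iteration. That last point, combining equivariance with the exactness of Nash's correction, is the heart of the matter.
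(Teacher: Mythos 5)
The paper does not prove this theorem: it is quoted with attribution from Moore--Schlafly \cite{moore1980equivariant} and used as a black box, so there is no internal proof to compare your attempt against. As a reconstruction of the cited result, your sketch is essentially on target. The core of Moore--Schlafly's argument is precisely what you identify: begin from a free $G$-equivariant embedding into an orthogonal representation (Mostow--Palais, padded by further equivariant coordinates drawn from the Peter--Weyl decomposition of $C^\infty(M)$), approximate the $G$-invariant metric $g$ equivariantly, and observe that Nash's correction scheme --- all of whose ingredients (pullbacks of the flat metric, Green operators, projections onto the normal bundle of a free map) intertwine the $G$-action when $G$ acts isometrically on $M$ and orthogonally on $V$ --- preserves the closed subspace of $G$-equivariant maps and therefore converges there. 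You invoke tools (G\"unther's simplified iteration from 1989, the B\'erard--Besson--Gallot heat-kernel embeddings from 1994) that postdate the 1980 paper, so the precise machinery differs from the original, which runs Nash's own implicit function theorem; but the strategy and the crucial equivariance bookkeeping are the same, and that bookkeeping is exactly where the content lies.

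One genuine gap is the boundary reduction. Passing to the double $\widehat M$ with the \emph{reflected} metric and then ``$G$-equivariantly smoothing near the seam'' does not yield an isometric embedding of $(M,g)$: the reflected metric is only Lipschitz across $\partial M$, and any smoothing necessarily alters the metric on a collar of $\partial M$ lying \emph{inside} $M$, so the restriction to $M$ of the resulting isometric embedding of $\widehat M$ is isometric for the perturbed metric, not for $g$. What you want instead is to \emph{extend} $g$ (rather than reflect it) to a smooth $G$-invariant metric $\hat g$ on $\widehat M$ with $\hat g|_M=g$ exactly; this can be done by pushing $g$ through a $G$-invariant collar of $\partial M$ into the reflected half and interpolating there, via a $G$-invariant cutoff supported in the reflected half, with any auxiliary smooth $G$-invariant metric, then averaging over $G$ if needed. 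Embedding $(\widehat M,\hat g)$ equivariantly and isometrically and restricting to $M$ then gives the claim with boundary.
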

	
	Indeed, consider $\mathcal M$, the set of $G$-invariant metrics in $G/K$, as a subset of an Euclidean space with metric $g_0$. Given any compact subset $S\subset\mathcal M$, we can define the following Riemannian metric $\tilde g$ in $M_S=S\times G/K$:
	\[  \tilde g_{g,x}((V,X),(W,Y))=g_0(V,W)+g(X,Y),\]
	where $V,W\in T_gS$ and $X,Y\in T_xG/K$.
	Moreover, the action of $G$ on the second factor is  by $\tilde g$-isometries.
	In particular, Theorem \ref{thm:schlafly} applies and we conclude that  there is an Euclidean space $V$ and a representation $\rho:G\to O(V)$ where $S\times G/K$ is equivariantly and isometrically embedded. Since $\rho$ is orthogonal and $G$ is transitive in each slice $\{g\}\times G/K\subseteq M_S$, the embedding consists in a collection of $\rho$-orbits whose induced metrics are the corresponding metrics $g\in S$.  Although Theorem \ref{thm:embedding} presents one single embedding, not covering several regions that one could cover with Theorem \ref{thm:schlafly}, it explicitly realizes the singular points $K,L$ and $M$. Since boundary points are excluded from any compact set $S$, the existence of such a realization is not guaranteed by Theorem \ref{thm:schlafly}.
	
	As a final comment, we note that the classical Nash Embedding Theorem guarantees that there is an isometric embedding   $\varphi$ from the whole manifold $(M=\mathcal M\times G/K,\tilde g)$ to an Euclidean space $\R^N$. Since the restriction of $\varphi$ to each slice $\{g\}\times G/K$ is isometric, one might wish to explore Gromov--Hausdorff convergence through $\varphi$. Indeed, we argue that Hausdorff limits of embedded orbits coincides with their Gromov--Hausdorff limits. Denote $N_g=\{g\}\times G/K$.
	\begin{lema}
	    Let $g_n$ be as in Theorem \ref{thm:collapse}. Then,  the Hausdorff limit of $\varphi(N_{g_n})$ coincides with the Gromov--Hausdorff limit of $(G/K,d_g)$, up to isometry.
	\end{lema}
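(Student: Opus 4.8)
The plan is to realize the Hausdorff limit as the image of a single limiting map $G/K\to\R^N$ and to identify its induced length metric with the limit metric $d_g$ of Theorem \ref{thm:collapse}. First I would observe that, since $\varphi$ is a Riemannian isometric embedding of $(M,\tilde g)$ and $\tilde g$ restricts on each slice $N_{g_n}=\{g_n\}\times G/K$ exactly to $g_n$ (tangent vectors to a slice have no $\mathcal M$-component), the restriction $\varphi_n:=\varphi|_{N_{g_n}}$ is a Riemannian isometric embedding of $(G/K,g_n)$ into $\R^N$. Hence every curve in $C_n:=\varphi_n(G/K)$ is $\varphi_n\circ c$ for a curve $c$ in $G/K$ of equal Euclidean length and $g_n$-length, so the intrinsic length metric $\ell_n$ of $C_n$ makes $(C_n,\ell_n)$ isometric to $(G/K,d_{g_n})$, and $d_{\R^N}|_{C_n}\le\ell_n$. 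By Theorem \ref{thm:collapse} the spaces $(G/K,d_{g_n})$ converge to $(G/H,d_F)$ in the Gromov--Hausdorff sense, so their diameters are bounded, hence so are the Euclidean diameters of the $C_n$. Composing each $\varphi_n$ with the translation of $\R^N$ sending a fixed basepoint $\varphi_n(x_0)$ to the origin and passing to a subsequence, the maps $\varphi_n$ become uniformly bounded and --- being uniformly Lipschitz from $(G/K,d_{g_1})$, as all the $g_n$ lie below a fixed multiple of $g_1$ --- equicontinuous; Arzel\`a--Ascoli then yields, along a further subsequence, a uniform limit $\varphi_\infty:G/K\to\R^N$ whose image $C:=\varphi_\infty(G/K)$ is the Hausdorff limit of the $C_n$. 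Equip $C$ with its intrinsic length metric $\ell_C$; the goal is $(C,\ell_C)\cong(G/H,d_F)$.

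The easy inequality comes next. For any curve $c$ in $G/K$ one has $\varphi_n\circ c\to\varphi_\infty\circ c$ uniformly and $\mathrm{length}_{\R^N}(\varphi_n\circ c)=\mathrm{length}_{g_n}(c)\to\mathrm{length}_g(c)$ by dominated convergence, so $\varphi_\infty\circ c$ is a curve in $C$ of Euclidean length $\mathrm{length}_g(c)$. Taking the infimum over curves joining $x$ to $y$, and writing $d_g(x,y)=\inf_c\mathrm{length}_g(c)$ for the (Finslerian) pseudometric that $g$ induces on $G/K$, this gives
\[
\ell_C(\varphi_\infty(x),\varphi_\infty(y))\ \le\ d_g(x,y);
\]
in particular $|\varphi_\infty(x)-\varphi_\infty(y)|\le d_g(x,y)$, so $\varphi_\infty$ factors through the metric quotient of $(G/K,d_g)$, which by Theorem \ref{thm:collapse} and Corollary \ref{cor:Finsler} is precisely $(G/H,d_F)$, as a continuous surjection $\bar q:(G/H,d_F)\to(C,\ell_C)$ that is $1$-Lipschitz.

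It remains to show $\ell_C(\varphi_\infty(x),\varphi_\infty(y))\ge d_g(x,y)$, i.e.\ that $\bar q$ introduces no short-cuts, and this is the step I expect to be the main obstacle: without using the homogeneity of the collapse the statement is false, as a ``comb'' domain collapses intrinsically onto a metric tree while its Hausdorff limit is a full square, whose length metric is the Euclidean one. The scheme is to lift a curve $\gamma$ in $C$ from $\varphi_\infty(x)$ to $\varphi_\infty(y)$ to curves $\gamma_n=\varphi_n\circ c_n$ in $C_n$ with $\limsup_n\mathrm{length}_{\R^N}(\gamma_n)\le\mathrm{length}_{\R^N}(\gamma)$ and endpoints $d_{g_n}$-close to $x,y$; then $d_{g_n}(x,y)\le\mathrm{length}_{g_n}(c_n)+o(1)=\mathrm{length}_{\R^N}(\gamma_n)+o(1)$, and letting $n\to\infty$ (using $d_{g_n}(x,y)\to d_g(x,y)$ from Theorem \ref{thm:collapse}) yields $d_g(x,y)\le\mathrm{length}_{\R^N}(\gamma)$, after which the infimum over $\gamma$ finishes the argument. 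Producing such $\gamma_n$ is the delicate point, because the $\varphi_n$ are not uniformly bi-Lipschitz --- their second fundamental forms blow up along the degenerating directions of $g_n$ --- so nearest-point projection of $\gamma$ onto $C_n$ need not control length. Here I would invoke the structure of Theorem \ref{thm:collapse}: cover $G/K$ by finitely many $G$-translates of a tubular chart $(H/K)\times D$ around a single $H$-orbit; over such a chart $\varphi_n$ displays $C_n$ as a bundle whose fibres $\varphi_n(Hz)$ have Euclidean diameter $\to0$ (the $H$-orbit directions contain $\ker g$ and are exactly the $g_n$-collapsing ones) and whose transverse factor has $g_n$-geometry converging to the fixed Finsler norm $F_g$, along which $\varphi_n$ is $(1+o(1))$-bi-Lipschitz at a fixed scale; one then replaces $\gamma$, chart by chart, by a transverse curve shadowing it together with short fibre segments to reconnect the pieces, the fibre segments costing $o(1)$ in total and the transverse pieces at most $(1+o(1))\,\mathrm{length}_{\R^N}(\gamma)$. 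With both inequalities in hand, $\bar q$ is a distance-preserving continuous surjection, hence an isometry $(G/H,d_F)\cong(C,\ell_C)$; since the same conclusion holds for every subsequential Hausdorff limit, the Lemma follows from Theorem \ref{thm:collapse}.
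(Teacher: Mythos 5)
You take a genuinely different route from the paper. The paper's proof is top-down: it observes that $\pi\colon M\to\mathcal M$ is a submetry for $\tilde g$, so the Hausdorff distance between fibers in the metric completion $\overline M$ equals $d_{g_0}$ on the base; it then extends $\pi$ and $\varphi$ to $1$-Lipschitz maps $\bar\pi,\bar\varphi$ on $\overline M$, reads off that $\varphi(N_{g_n})$ Hausdorff-converges to $\bar\varphi(\bar\pi^{-1}(g_\infty))$, and closes with uniqueness of Gromov--Hausdorff limits. You instead reconstruct a limit map $\varphi_\infty$ bottom-up via Arzel\`a--Ascoli and try to match the intrinsic length metric of its image against $d_g$ by a two-sided inequality.

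The setup and the upper bound $\ell_C\le d_g$, together with the factoring through $(G/H,d_F)$, are fine, and you correctly flag the reverse inequality as the crux. But there the proposal stops at a sketch with a genuine hole. The assertion that ``$\varphi_n$ is $(1+o(1))$-bi-Lipschitz at a fixed scale along the transverse factor'' is not justified, and it is not automatic: the second fundamental form of the slice $N_{g_n}$ in $(M,\tilde g)$ (hence of $C_n$ in $\R^N$) involves the $\mathcal M$-derivative of the fiber metric measured against the degenerating $g_n$, so its mixed transverse/collapsing components blow up as $g_n\to g$, the reach of $C_n$ in $\R^N$ tends to zero, and there is no a priori fixed scale on which Euclidean distance controls intrinsic distance on $C_n$ uniformly in $n$. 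Thus the ``shadow the curve and reconnect by short fiber segments'' step needs a quantitative uniform-geometry estimate that you neither state nor prove. There is a second issue: when $F_g$ is genuinely Finsler, your transverse picture implicitly models the limit as a Riemannian factor onto which $\varphi_n$ is asymptotically isometric, which would force $F_g$ to be Riemannian --- exactly what the paper warns is unknown, in the paragraph just after the lemma. So the plan identifies the right obstruction but does not overcome it; the submetry-and-completion argument the paper uses is designed precisely to avoid this curve-lifting estimate.
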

	\begin{proof} Consider a convergent sequence  $g_n$  of invariant metrics in $G/K$, $\lim g_n=g_\infty$, and the corresponding sequence of submanifolds $N_{g_n}=\{g_n\}\times G/K$. 
	
	Let $\overline M$ be a metric completion of $M$ and $\overline{\mathcal M}$ the closure of $\mathcal M$ in the space of bilinear forms in $\g$. Denote by $\pi:M\to \mathcal M$ the first coordinate projection and observe that $\pi$ is a \textit{submetry}, in the sense that $\pi$ is 1-Lipschitz and 
	\[d_{\tilde g}(\pi^{-1}(g),\pi^{-1}(g'))= d_{g_0}(g,g').\] 
	To conclude the equality, note that, fixed $x\in G/K$, the $d_{\tilde g}$-length of the line $\gamma(t)= ((1-t)g+tg',x)$ is
	\[|\dot\gamma(t)|_{(\gamma(t),x)}=g_0(g'-g,g'-g)^{1/2}=d_{g_0}(g,g'). \]

	In particular, we conclude  that $\pi$  extends to a surjective map $\bar\pi:\overline{M}\to \overline{\mathcal M}$. Moreover, since
	\[d_{GM}(N_{g_n},N_{g_\infty})\leq d_{\tilde g}(N_{g_n},N_{g_\infty}), \]
	the Gromov--Hausdorff of $N_{g_b}$ is $\bar\pi^{-1}(g_{\infty})$.
	
	Since  $\varphi$ is 1-Lipschitz, it can be extended as a 1-Lipschitz map  $\bar\varphi:\overline{ M}\to \R^N$. 
	We conclude that every $\epsilon$-neighborhood of $\bar\varphi(N_g)$ contains an infinite number of elements of $\{\varphi(N_{g_n})\}$, therefore, by the uniqueness of Gromov--Hausdorff limits, $\bar\varphi(N_g)$ must be isometric to  $N_g$.
	\end{proof}
	
	Although the last Lemma seems  expected, it is placed in an awkward position: $N_g$ is a length-space whose  distance  is possibly  not  Riemannian. On the other  hand, if a manifold $N$ is embedded into $\R^N$, (by approximating curves by smooth ones, one concludes that) its induced length-space structure is inherited from the induced Riemannian metric. It seems that, whenever the distance in $N_g$ is genuinely Finsler, the map $\bar\varphi|_{N_g}$ cannot be a smooth embedding.  It would be interesting to understand more deeply this phenomenon: either to prove that the Finsler norm  in Theorem \ref{thm:collapse} is always Riemannian or to have an example of an explicit realization of  $\bar\varphi(N_g)$  for a non-Riemnnian   $F_g$.

	%




\begin{thebibliography}{99}
		
		\bibitem{anastassiou-chrysikos}
		S.\ Anastassiou, I. Chrysikos:
		{\em The Ricci Flow Approach To Homogeneous Einstein Metrics On Flag Manifolds}, 
		J.\ Geom.\ and Phys., 61(8) (2011) 1587–1600. 
		
		\bibitem{anastassiou-chrysikos2}
		S.\ Anastassiou, I. Chrysikos:
		{\em Motion of Charged Particles and Homogeneous Geodesics in K\"ahler $C$-Spaces with Two Isotropy Summands}, 
		Tokyo J.\ Math., 32(2) (2009) 487–500. 
		
		
		\bibitem{Arvanitoyeorgos-Chrysikos-Sakane-2013}
		A.\ Arvanitoyeorgos,  I.\ Chrysikos, Y.\ Sakane:
		{\em Homogeneous Einstein metrics on generalized flag manifolds with five isotropy summands.} 
		Internat.\ J.\ Math.,
		24(10)
		(2013)
		1350077.
		
		\bibitem{bohm}, B. Christoph,  M. Wang and W. Ziller:
		{\it A variational approach for compact homogeneous Einstein manifolds},
		{Geom.\ Funct.\ Anal.},
		{14}
		(2004)
		{681--733}.

		\bibitem{bouldin}
		R.\ Bouldin:
		{\it The norm continuity properties of square roots},
		SIAM J.\ Math.\ Anal., 3(2) (1972) 206-2010.
		
		\bibitem{bredon}
		G. Bredon:
		{Introduction to Compact Transformation Groups}, {Academic press} (1972)
		
		
		
		\bibitem{ricci} 
		L.\ Grama, R.\ Martins, M.\ Patrão, L.\ Seco, L.\ Sperança:
		{\it The projected homogeneous Ricci flow and three-isotropy-summands flag manifolds} (2021) 
		submitted.
		
		\bibitem{helgason} 
		S.\ Helgason:
        {Differential Geometry, Lie Groups and Symmetric Spaces}, 
        Academic Press (1978)
		
		\bibitem{moore1980equivariant}
		{J. D. Moore and R. Schlafly}:
		{\it On equivariant isometric embeddings},
		{Mathematische Zeitschrift},
		{173}
		(1980)
		{119--133}.
		
		
		
		\bibitem{kimura} M.\ Kimura: {\it Homogeneous Einstein metrics on certain K\"ahler C-spaces}, Advances Studies in Pure Mathematics v.18(I) (1990) 303-320.
		
		\bibitem{cone} N.\ Shaked-monderer, A.\ Berman: {Completely Positive Matrices}, 
		World Scientific (2003). 
		
		\bibitem{safdari} M. Safdari:
		{\it An example of non-embeddability of the Ricci flow},
		{Annals of Global Analysis and Geometry},
		{55} (2019) {681--685}.

		\bibitem{siebenthal}
        J.\ de Siebenthal: {\em Sur certains modules dans une algèbre de Lie semi-simple},
        Comment.\ Math.\ Helv.\ {44} (1969) 1-3.
		
		\bibitem{wilkinglie} B. Wilking: 
		{\it A Lie algebraic approach to Ricci flow invariant curvature conditions and Harnack inequalities},
		{Journal f{\"u}r die reine und angewandte Mathematik (Crelles Journal)},
		{679} (2013) {223--247}.
		
		\bibitem{ziller} M.\ Wang, W.\ Ziller:
	    {\em Existence and non-existence of homogeneous Einstein metrics},
		{Invent.\ Math.},
		{84(1)}
		(1986)
		{177-194}.
		
	\end{thebibliography}
\end{document}